\newtheorem{theorem}{Theorem}[section]
\newtheorem*{claim}{Claim}
\newtheorem{corollary}[theorem]{Corollary}
\newtheorem{definition}[theorem]{Definition}
\newtheorem{lemma}[theorem]{Lemma}
\newtheorem{proposition}[theorem]{Proposition}
\newtheorem{remark}[theorem]{Remark}
\numberwithin{equation}{section}
\def\Xint#1{\mathchoice
{\XXint\displaystyle\textstyle{#1}}%
{\XXint\textstyle\scriptstyle{#1}}%
{\XXint\scriptstyle\scriptscriptstyle{#1}}%
{\XXint\scriptscriptstyle\scriptscriptstyle{#1}}%
\!\int}
\def\XXint#1#2#3{{\setbox0=\hbox{$#1{#2#3}{\int}$}
\vcenter{\hbox{$#2#3$}}\kern-.5\wd0}}
\def\dashint{\Xint-}
\newcommand{\R}{\mathbb{R}}
\newcommand{\N}{\mathbb{N}}
\newcommand{\Z}{\mathbb{Z}}
\newcommand{\M}{\mathcal{M}}
\newcommand{\F}{\mathcal{F}}
\newcommand{\Ne}[2]{N^{1,#1}(#2)}
\newcommand{\Nem}[3]{N^{1,#1}(#2;#3)}
\newcommand{\Neml}[3]{N^{1,#1}_{loc}(#2;#3)}
\newcommand{\diaco}[1]{\widehat{#1}_{\operatorname{diag}}}
\newcommand{\spt}{\operatorname{spt}}
\newcommand{\Mod}{\operatorname{Mod}}
\newcommand{\dist}{\operatorname{dist}}
\newcommand{\diam}{\operatorname{diam}}
\newcommand{\diag}{\operatorname{diag}}
\newcommand{\ud}{\mathrm {d}}
\newcommand{\Cp}{\operatorname{Cap}}
\title[Energy minimizers and lifts]{Existence of $p$-energy minimizers in homotopy classes and lifts of Newtonian maps}
\thanks{The author was supported by the Academy of Finland, project no. 1252293, and the V\"ais\"al\"a Foundation.\newline 
\noindent \emph{Email}: elefterios.soultanis@helsinki.fi}
\author{Elefterios Soultanis}
\date{\today}
\address{P.O. Box 68 (Gustaf H\"allstr\"omin katu 2b),\newline
\indent FI-00014 University of Helsinki}
\email{elefterios.soultanis@helsinki.fi}%
\urladdr{http://wiki.helsinki.fi/display/mathstatHenkilokunta/Soultanis\%2C+Elefterios}
\keywords{Function spaces, Metric measure spaces, Poincar\'e inequality, Nonpositive curvature, Homotopy}%
\begin{document}
\begin{abstract}
We study the notion of $p$-quasihomotopy in Newtonian classes of mappings and link it to questions concerning lifts of Newtonian maps, under the assumption that the target space is nonpositively curved. Using this connection we prove that every $p$-quasihomotopy class of Newtonian maps contains a minimizer of the $p$-energy if the target has hyperbolic fundamental group.
\end{abstract}
\maketitle

\section{Introduction}

\subsubsection*{Background and setting}
Harmonic maps into manifolds of nonpositive curvature have been studied for at least fifty years. In the seminal paper \cite{eel64} of J. Eells and J. Sampson the authors considered minimizers of the energy $$ E(f)=\frac{1}{2}\int_M|df|^2\ud vol_M$$ for maps $f:M\to N$ between compact manifolds $M$,$N$.

Further work in the seventies, in the setting of maps between manifolds, includes that of R. Schoen and S. Yau \cite{sch76, sch79} which are strongly influenced by topological and algebraic questions related to manifolds. Coming into the nineties the theory found applications in some rigidity questions, see e.g. the paper by M. Gromov  and R. Schoen  \cite{gro92}. N. Korevaar and R. Schoen \cite{kor93} refined the approach in \cite{gro92} and considered energies
\begin{align*}
E^p(f)=&\limsup_{\varepsilon\to 0}\sup_{\varphi \in C_0(\Omega:[0,1])}\int_M\varphi e_\varepsilon(f)\ud vol_M,
\end{align*}
where
\begin{align*}
e_\varepsilon(f)(p)=&\dashint_{B(p,\varepsilon)}\frac{d_Y^p(f(q),f(p))}{\varepsilon^p}\ud vol_M(q)
\end{align*}
for maps $f:\Omega\subset M\to Y$ between a domain of a Riemannian manifold and a \emph{singular} target space. They proved the existence of the limit as  $\varepsilon$ tends to zero in the sense of weak convergence of measures, when $1\le p<\infty$.

For general $p\in (1,\infty)$ in the manifold setting, B. White \cite{whi88} and R. Hardt - F-H.Lin \cite{har87} developed the theory, notably removing the curvature constraints on the target.

Generalizations both towards $p$-harmonic -- rather than harmonic -- maps, and towards more general, nonsmooth, target spaces were natural and necessary steps in light of the applications and scope of the theory. 

A central problem in the setting with nonpositively curved target is to deform a given finite energy map $f$ to a map of minimal energy, that is, a $p$-harmonic representative of the homotopy class of $f$.

This raises several issues. One is the question of defining the energy for maps into nonsmooth spaces. Another, more serious, is the lack of a notion of homotopy for noncontinuous maps.

\bigskip\noindent In the present paper we adopt the framework of Newtonian spaces $\Nem pXY$ together with the notion of $p$-quasihomotopy (see \cite{teri}).

Two $p$-quasicontinuous maps $u,v:X\to Y$ from a metric measure space $X$ to a topological space $Y$ are said to be $p$-quasihomotopic if there is a map $$h:X\times [0,1]\to Y$$ so that given any $\varepsilon >0$ there exists an open set $E\subset X$ with $\Cp_p(E)<\varepsilon$ so that $h|_{X\setminus E\times [0,1]}$ is a classical (continuous) homotopy between $u|_{X\setminus E}$ and $v|_{X\setminus E}$.

The domain $X$ is assumed, throughout the paper, to be a compact doubling $p$-Poincar\'e space. On the target side we assume that $Y$ is a geodesic \emph{path representable space}. The definition is given in Section 3, the gist of it being that, for CW-complexes, path representable spaces are exactly the Eilenberg-Mclane spaces of type $K(\pi,1)$.

The main result of this paper is the following compactness theorem for $p$-qua\-si\-ho\-mo\-to\-py classes.

\begin{theorem}\label{minex}
Let $(X,d,\mu)$ be a compact doubling $p$-Poincar\'e space and $Y$ a compact geodesic path representable space whose fundamental group is Noetherian or torsion free hyperbolic. Then in any $p$-quasihomotopy class $[v]\subset \Nem pXY$ there is a minimizer of the $p$-energy $$e_p(u):=\int_Xg_u^p\ud\mu.$$
\end{theorem}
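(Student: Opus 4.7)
My approach would be the direct method in the calculus of variations, with the crucial difficulty being the preservation of the $p$-quasihomotopy class under weak limits of a minimizing sequence. I would begin by choosing $u_n \in [v]$ with $e_p(u_n) \to \inf_{[v]} e_p$. Since $X$ is a compact doubling $p$-Poincar\'e space and $Y$ is compact, the uniform energy bound together with the Poincar\'e inequality yields a uniform bound on $\|u_n\|_{N^{1,p}}$. The obstruction to closing the argument directly is that weak Newtonian convergence need not respect $p$-quasihomotopy classes, so one cannot simply extract a limit in $\Nem pXY$ and declare victory.

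The key idea is to use the lift correspondence developed earlier in the paper. Each $u_n$, being $p$-quasihomotopic to $v$, admits a lift $\tilde u_n : X \to \tilde Y$ to the universal cover, and the energy is preserved: $e_p(\tilde u_n)=e_p(u_n)$. Since $Y$ is path representable and geodesic with the standing nonpositive curvature assumption, $\tilde Y$ is a proper geodesic CAT(0) space, a setting in which the Newtonian theory enjoys good convexity and lower semicontinuity properties. After normalizing each $\tilde u_n$ by a deck transformation so that $\tilde u_n(x_0)$ lies in a fixed fundamental domain for some basepoint $x_0 \in X$, I would apply the Poincar\'e inequality to the Lipschitz function $x \mapsto d_{\tilde Y}(\tilde u_n(x),\tilde u_n(x_0))$, obtaining uniform $L^p$ control on the images.

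The role of the hypothesis on $\pi_1(Y)$ is exactly to upgrade this average control into pointwise boundedness so that the lifts cannot escape to infinity. In the torsion-free hyperbolic case I would invoke the Morse lemma together with the exponential growth of balls in $\tilde Y$ to rule out wild branching of the orbits of deck transformations along the sequence; in the Noetherian case I would instead use the ascending chain condition on subgroups of $\pi_1(Y)$ to stabilize the subgroup generated by the relevant deck elements along a subsequence. Either way one obtains a bounded set of $\tilde Y$ containing the essential image of a subsequence, and properness of $\tilde Y$ then yields a Newtonian limit $\tilde u : X \to \tilde Y$ after passing to a further subsequence.

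To conclude, I would appeal to lower semicontinuity of the $p$-energy on $\Nem pX{\tilde Y}$, giving $e_p(\tilde u) \le \liminf e_p(\tilde u_n)$, and then project down to $u := \pi \circ \tilde u \in \Nem pXY$. The lift/quasihomotopy correspondence, used now in the reverse direction, guarantees $u \in [v]$, and by construction $e_p(u) = e_p(\tilde u) = \inf_{[v]} e_p$. I expect the main obstacle to be the step of excluding escape to infinity of the normalized lifts; the hyperbolic case should follow relatively cleanly from coarse geometry, whereas the Noetherian case appears more delicate and likely requires a careful inductive argument on the subgroup structure generated by the successive comparison deck transformations between $\tilde u_n$ and $\tilde u_{n+1}$.
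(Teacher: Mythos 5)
Your overall scaffolding (direct method, Rellich-type compactness, lower semicontinuity, and a lift correspondence to preserve the class) matches the paper's strategy, but the central step rests on a false premise: you assert that each $u_n\in[v]$ ``admits a lift $\tilde u_n:X\to\tilde Y$ to the universal cover.'' Being $p$-quasihomotopic to $v$ does \emph{not} imply liftability to the universal cover of $Y$; that would require the induced subgroup of loops of $u_n$ to be trivial, which fails for general maps (already a degree-one map of a circle into $Y$ has no such lift). The correct correspondence, Theorem \ref{homchar}, is that $u_n\simeq v$ iff the \emph{pair} $(v,u_n):X\to Y\times Y$ lifts to the diagonal cover $\diaco Y$, whose deck data is $\diag(\pi(Y))\le\pi(Y)\times\pi(Y)$ rather than the trivial subgroup. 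Your concluding step inherits the same problem: a lift of $u$ alone to $\tilde Y$ (even if it existed) carries no information about which quasihomotopy class $u$ lies in; one needs a lift of $(v,u)$ to $\diaco Y$.

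A second gap is that you import a nonpositive curvature assumption (``$\tilde Y$ is a proper geodesic CAT(0) space'') that is not among the hypotheses of the theorem; $Y$ is only assumed geodesic and path representable, and CAT(0) enters only in Corollary \ref{cat}. Consequently the normalization-by-deck-transformations and the ``escape to infinity'' analysis are aimed at the wrong difficulty. In the paper the obstruction is algebraic, not metric: after reducing to lifts of $(v,u_j)$ to $\diaco Y$ and converting liftability into the subgroup condition $u_\sharp\F_{x_0}\le\phi_\sharp\pi(\widehat Y,\hat y_0)$ of Theorem \ref{liftchar}, the limit map's induced subgroup is only known to lie in $\bigcup_k\bigcap_{n\ge k}H^{g_n}$ for varying conjugates $H^{g_n}$ of the covering subgroup, and one must show this is contained in a \emph{single} conjugate. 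That is exactly the subconjugacy property, which is where the Noetherian hypothesis (via finite generation, Lemma \ref{noeth}) and the torsion-free hyperbolic hypothesis (via virtually cyclic centralizers and the ACC for virtually abelian subgroups, Lemma \ref{hyptor}) are actually used --- not to bound orbits of deck transformations in $\tilde Y$. Your instinct to use the ACC in the Noetherian case is in the right spirit, but it must be applied to the chain of intersections of conjugates of $\diag(\pi(Y))$, not to subgroups generated by comparison deck elements between consecutive lifts.
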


\noindent In particular if $Y$ is a compact space of nonpositive curvature whose universal cover is Gromov hyperbolic then any $p$-quasihomotopy class in $\Nem pXY$ contains a minimizer of the $p$-energy $e_p$ (Corollary \ref{cat}).

In contrast, the Noetherian property is rather rare in the presence of nonpositive curvature on $Y$. In fact polycyclic groups -- a large subclass of Noetherian groups -- are semihyperbolic (e.g. fundamental groups of CAT(0) or Gromov hyperbolic -spaces) only if they are virtually abelian.

\bigskip\noindent The classical Eells-Sampson result \cite{eel64} is concerned with homotopy classes of continuously differentiable maps between Riemannian manifolds where the target has nonpositive curvature. The proof is based on a gradient flow method of the energy functional and produces a harmonic map homotopic to a given finite energy map. (It would be interesting to know whether, in our generality, two $p$-quasihomotopic \emph{continuous} maps are classically homotopic.) S. Pigola and S. Veronelli \cite{pig09} have studied $p$-harmonic maps and homotopy classes of continuous maps with finite $p$-energy into nonpositively curved manifolds. Their methods incorporate some group theory and also the connection between homotopy and lifts, which is strongly present in this work.

Notable advance in the direction of maps between nonsmooth spaces was made by Korevaar-Schoen \cite{kor93} where the authors define Sobolev spaces and an energy functional for maps from a Riemannian manifold to a metric space. They go on to solve the classical Dirichlet problem for CAT(0) targets, and generalize the Eells-Sampson result to the case of locally CAT(0) target spaces. On the other hand, the Korevaar-Schoen energy functional does not exist (without passing to a subsequence) in the generality we consider.

J. Jost \cite{jost96} further relaxed the assumptions on the domain side to include complete doubling 2-Poincar\'e spaces arising as quotients of the action of a group $\Gamma$ of measure preserving maps on a space. He solved a related problem of finding a $\rho$-equivariant harmonic map for a given homomorphism $\rho$ from $\Gamma$ to the isometry group of the universal cover of the target space (which is assumed to be of nonpositive curvature).

When $Y$ has nonpositive curvature, the homotopy class of a continuous map $u:X\to Y$ corresponds to the class of $\rho$-equivariant maps $\widetilde X\to \widetilde Y$ for $\rho=u_\sharp:\pi(X)\to \pi(Y)$. In our level of generality this connection fails however, since a Newtonian map need not induce a homomorphism between the fundamental groups.

Without this link it is difficult to transfer any topological properties of the domain side to the target. The current work focuses on exploiting geometric group theoretic properties of the fundamental group of the target to obtain existence of $p$-energy minimizers without topological assumptions on the domain side.

\subsection{Tools}

\noindent The proof of Theorem \ref{minex} uses several tools and results, some of which may be of independent interest. Next we describe some of them.

For a topological Hausdorff space $Y$ admitting a universal covering we consider the \emph{diagonal cover of $Y$}, $p:\diaco Y\to Y\times Y$, with the property that $p_\sharp\pi(\diaco Y)\le \pi(Y\times Y)$ is conjugate to $\diag\pi(Y)$. The construction of $\diaco Y$ is given in Section 2.4. (The name is slightly misleading since $\diaco Y$ is actually a covering of $Y\times Y$ and not of $Y$.)

\bigskip\noindent The notion of path representability is defined using the diagonal cover. Suppose $Y$ is a Hausdorff space admitting a universal cover. We say that $Y$ is \emph{path representable} if the component maps of $p=(p_0,p_1):\diaco Y\to Y\times Y$ are homotopic as maps $\diaco Y\to Y$. The name, along with properties of such spaces, will be discussed more in Section 3.

When $Y$ is a length space there is a unique length metric $\hat d$ on $\diaco Y$ making $p$ a local isometry. In this paper we always consider the diagonal cover of $Y$ with this metric when $Y$ is a length space.

\bigskip\noindent The following theorem shows the usefulness of the class of path representable spaces.

\begin{theorem}\label{homchar}
Let $X$ be a compact doubling $p$-Poincar\'e space, $1<p<\infty$, and $Y$ a complete separable geodesic path representable space. Then two maps $u,v\in \Nem pXY$ are $p$-quasihomotopic if and only if there exists a lift $h\in \Nem pX{\diaco Y}$ of the map $(u,v)\in \Nem pX{Y\times Y}$, i.e., a map $h\in \Nem pX{\diaco Y}$ such that $p\circ h=(u,v)$ as maps in $\Nem pX{Y\times Y}$.
\end{theorem}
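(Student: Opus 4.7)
The plan is to prove the two implications separately, relying on two structural facts. First, the diagonal $\diag Y\subset Y\times Y$ admits a canonical continuous section $s\colon \diag Y\to \diaco Y$ of $p$, since loops in $\diag Y$ realise exactly $p_\sharp\pi(\diaco Y)$ up to conjugation. Second, by path-representability there exists a continuous homotopy $H\colon \diaco Y\times[0,1]\to Y$ with $H(\cdot,0)=p_0$ and $H(\cdot,1)=p_1$. Morally, a lift of $(u,v)$ to $\diaco Y$ encodes, for each point $x$, a homotopy class of paths from $u(x)$ to $v(x)$, which is precisely what a quasihomotopy records.

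The implication ``lift $\Rightarrow$ quasihomotopy'' is the easy direction. Given $h\in \Nem pX{\diaco Y}$ with $p\circ h=(u,v)$, set $\tilde h(x,t):=H(h(x),t)$; then $\tilde h(\cdot,0)=u$ and $\tilde h(\cdot,1)=v$. Since Newtonian maps into a separable target are $p$-quasicontinuous, for every $\varepsilon>0$ there is an open $E\subset X$ with $\Cp_p(E)<\varepsilon$ on whose complement $h$ is continuous. Then $\tilde h|_{(X\setminus E)\times[0,1]}$ is a classical homotopy between $u|_{X\setminus E}$ and $v|_{X\setminus E}$, establishing the quasihomotopy.

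For the converse, fix a quasihomotopy $h\colon X\times[0,1]\to Y$ of $u$ and $v$. For each $\varepsilon>0$ choose an open $E\subset X$ with $\Cp_p(E)<\varepsilon$ such that $h$ is continuous on $(X\setminus E)\times[0,1]$. For $x\in X\setminus E$ the path $t\mapsto (u(x),h(x,t))$ in $Y\times Y$ starts at the diagonal point $(u(x),u(x))$; lift it uniquely to a path $\tilde\gamma_x$ in $\diaco Y$ starting at $s(u(x),u(x))$ and set $\hat h(x):=\tilde\gamma_x(1)$, so that $p(\hat h(x))=(u(x),v(x))$. Uniform continuity of $h$ on the compact set $(X\setminus E)\times[0,1]$, together with local triviality of the covering, makes $\hat h$ continuous on $X\setminus E$. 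Uniqueness of path lifting and continuity of $s$ render these definitions compatible across different choices of $E$, producing $\hat h$ defined outside a set of $p$-capacity zero.

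The main obstacle is to verify that $\hat h\in \Nem pX{\diaco Y}$. Since $p\colon\diaco Y\to Y\times Y$ is a local isometry for the length metric $\hat d$, any upper gradient of $(u,v)$ is also an upper gradient of $\hat h$ along every curve in $X$ on which $\hat h$ is continuous. Choosing a nested sequence of open sets $E_n$ with $\Cp_p(E_n)\to 0$ on whose complements $\hat h$ is continuous, the family of rectifiable curves meeting $\bigcap_n E_n$ has zero $p$-modulus, so the upper gradient inequality for $\hat h$ holds along $p$-almost every curve. This yields a $p$-integrable upper gradient of $\hat h$ dominated by one for $(u,v)$, completing the argument.
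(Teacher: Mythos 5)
Your proposal is correct and follows essentially the same route as the paper: the easy direction uses the homotopy $p_0\simeq p_1$ exactly as in the text, and your lift $\hat h(x)$ --- the endpoint of the lift of $t\mapsto(u(x),h(x,t))$ starting from the constant-path section over the diagonal --- coincides with the paper's explicit formula $\hat h(x)=[h(x,\cdot)]$, with membership in $\Nem pX{\diaco Y}$ verified in the same way via the local isometry property of $p$. One small phrasing slip in the last step: what you actually need is that $p$-almost every curve avoids some single $E_n$ entirely (i.e.\ the family of curves meeting \emph{every} $E_n$ has zero $p$-modulus), not merely that it misses $\bigcap_n E_n$; the standard capacity--modulus argument gives the former, so the gap is cosmetic.
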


\noindent Shifting focus to lifts of Newtonian maps, we have the following characterization of the existence of lifts to a given Newtonian map with prescribed value at a given point, in the spirit of the classical version (see for instance \cite{mas67}). Since however Newtonian maps need not induce a homomorphism between the fundamental groups we have to replace the notion of $u_*\pi(X,x_0)$ by a corresponding notion suitable for Newtonian maps.


If $u\in \Nem pXY$ is a map with $p$-weak upper gradient $g\in L^p(\mu)$ and $\Gamma_0$ is such that $g$ is an upper gradient of $u$ along any curve $\gamma$ (and subcurve $\gamma'$) which does not belong to $\Gamma_0$ we set \[ u_\sharp\F_{x_0}(g;\Gamma_0):=\langle [u\circ\gamma]: \gamma\in \F_{x_0}(g;\Gamma_0)\rangle \le \pi(Y,u(x_0)), \] where $\F_{x_0}(g;\Gamma_0)$ is a \emph{fundamental system of loops for $u$} (with basepoint $x_0\in X$), chosen suitably so that it is ``big'' but includes only curves along which $u$ is continuous. Note that $\F_{x_0}(g;\Gamma_0)$ does \emph{not} consist of homotopy classes of paths but rather paths themselves.

The subgroup $u_\sharp\F_{x_0}(g;\Gamma_0)$ is called the \emph{induced subgroup of the fundamental system of loops by $u$} with basepoint $x_0$. The precise definitions are given in Section 4.

\begin{theorem}\label{liftchar}
Suppose $X$ is a compact doubling $p$-Poincar\'e space and $Y$ a complete separable length space with a covering map $\phi:\widehat Y\to Y$ that is a local isometry. Let $u\in \Nem pXY$. There is a null set $N\subset X$ so that for $x_0\in X\setminus N$ and $\hat y_0\in \phi^{-1}(u(x_0))$, the map $u$ admits a lift $h\in \Nem pX{\widehat Y}$ with Lebesgue point $x_0$ and value $h(x_0)=\hat y_0$ in the strong sense 
if and only if
\[ u_\sharp\F_{x_0}(g;\Gamma_0)\le \phi_\sharp\pi(\widehat Y,\hat y_0) \] for some fundamental system of loops $\F_{x_0}(g;\Gamma_0)$ for $u$.
\end{theorem}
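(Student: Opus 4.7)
The plan is to prove both implications by mimicking the classical covering space lifting criterion, substituting continuous paths by the good curves encoded in the fundamental system $\F_{x_0}(g;\Gamma_0)$.

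For the easy direction, assume $h\in\Nem pX{\widehat Y}$ is a lift with $h(x_0)=\hat y_0$ at a strong Lebesgue point. Since $\phi$ is a local isometry, $g$ serves (after enlarging $\Gamma_0$ by a $p$-exceptional family) as a $p$-weak upper gradient of $h$ along the same curves. Thus for any $\gamma\in\F_{x_0}(g;\Gamma_0)$ the composition $h\circ\gamma$ is continuous and, using the strong Lebesgue point condition, is a loop at $\hat y_0$. Then $[u\circ\gamma]=\phi_\sharp[h\circ\gamma]\in \phi_\sharp\pi(\widehat Y,\hat y_0)$, so the generators of $u_\sharp\F_{x_0}(g;\Gamma_0)$ all land in $\phi_\sharp\pi(\widehat Y,\hat y_0)$, giving the subgroup inclusion.

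For the converse, assume the subgroup inclusion. Remove from $X$ the null set $N$ of points that are either non-Lebesgue for $u$ or lie in the endpoint set of the exceptional family associated with $\Gamma_0$. Using the Poincar\'e inequality and doubling, every remaining point $x$ is joined to $x_0$ by a rectifiable $\alpha_x$ along which $u$ is absolutely continuous with $g$ as upper gradient; lift $u\circ\alpha_x$ in $\widehat Y$ starting at $\hat y_0$ (possible since $\phi$ is a covering local isometry applied to a continuous curve) and set $h(x)$ to be its endpoint. If $\beta_x$ is another good connector from $x_0$ to $x$, then $\alpha_x\cdot\bar\beta_x$ is a loop that may be taken inside $\F_{x_0}(g;\Gamma_0)$, so by hypothesis $[u\circ(\alpha_x\cdot\bar\beta_x)]\in\phi_\sharp\pi(\widehat Y,\hat y_0)$; therefore its lift at $\hat y_0$ closes up, forcing the two candidate values of $h(x)$ to coincide.

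It remains to verify that $h$ is Newtonian with $p$-weak upper gradient $g$ and satisfies $h(x_0)=\hat y_0$ strongly. For $p$-a.e.\ curve $\tau$ between good points $y,z$, concatenating with $\alpha_y$ and $\bar\alpha_z$ yields a loop in the fundamental system; lifting $u\circ\tau$ from $h(y)$ must therefore end at $h(z)$. Since $\phi$ is a local isometry, the length of the lifted curve equals that of $u\circ\tau$, so integrating $g$ along $\tau$ bounds $\hat d(h(y),h(z))$, which is exactly the upper gradient inequality for $h$. The strong Lebesgue point statement at $x_0$ is inherited from the continuity of lifts along short curves emanating from $x_0$ through the local isometry near $\hat y_0$. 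The principal obstacle is combinatorial rather than analytic: one must arrange the fundamental system and the family of connectors $\{\alpha_x\}$ so that (i) almost every point admits a connector, (ii) differences of connectors are themselves in the fundamental system, and (iii) $p$-a.e.\ curve can be inserted between connectors. Balancing these requirements — making $\F_{x_0}(g;\Gamma_0)$ ``big'' enough to absorb the differences yet small enough that every member is $g$-controlled — is the technical crux and relies on Fuglede-type properties of $p$-modulus in doubling Poincar\'e spaces.
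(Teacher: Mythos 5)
Your overall strategy is the same as the paper's: define $h(x)$ as the terminal point of the lift of $u\circ\alpha_x$ for a good connector $\alpha_x$, use the subgroup hypothesis for well-definedness, and get the upper gradient inequality from the local isometry. But the places you defer as ``the technical crux'' are exactly where the real work lies, and your sketches of them would not go through as stated. First, the existence of connectors: you say the Poincar\'e inequality and doubling join a.e.\ point to $x_0$ by a good curve, and later invoke ``Fuglede-type properties of $p$-modulus.'' This cannot work with the unweighted $p$-modulus, because the family $\Gamma_{x_0x}$ of all curves through the fixed point $x_0$ already has $p$-modulus zero, so Fuglede-type arguments say nothing about it. The paper instead uses Keith's weighted estimate $d(x,y)^{1-p}\le C\Mod_p(\Gamma_{xy};\mu_{xy})$ together with the finiteness of $\int_X\rho^p\,\ud\mu_{xy}$ when $\M\rho^p(x),\M\rho^p(y)<\infty$; this is why the exceptional set is phrased via maximal functions and the $p$-support $\spt_p\Gamma_0$, not via a generic null set. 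The same issue infects your ``easy'' direction: the loops of $\F_{x_0}(g;\Gamma_0)$ are themselves $p$-exceptional, so ``enlarging $\Gamma_0$ by a $p$-exceptional family'' cannot guarantee that $h\circ\gamma$ is continuous along them; the paper has to produce a continuous representative $f_\alpha$ of $h\circ\alpha$ via the pointwise Haj\l asz estimate $\hat d(h(\alpha(t)),h(\alpha(s)))\le Cd(\alpha(t),\alpha(s))[\rho(\alpha(t))+\rho(\alpha(s))]$ at Lebesgue points along the curve.

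Second, two analytic steps are missing. The strong Lebesgue point claim $\lim_{r\to 0}\dashint_{B(x_0,r)}\hat d^p(\hat y_0,h)\,\ud\mu=0$ is a statement about integral averages over balls and does not follow from ``continuity of lifts along short curves''; the paper proves it by bounding $\hat d(\hat y_0,h(x))\le f_{x_0,\rho_1}(x)$ where $f_{x_0,\rho}(x)=\inf_{\gamma\in\Gamma_{x_0x}}\int_\gamma\rho$, and then showing (Lemma \ref{lebpoint}, via truncation, compactness of curve families, and Fatou) that $f_{x_0,\rho_1}$ has strong value $0$ at $x_0$. Likewise, your passage from a curvewise upper gradient inequality to $h\in\Nem pX{\widehat Y}$ skips the fact that $h$ is only defined off a null (not capacity-null) set and that one must produce a measurable Newtonian representative; the paper does this by taking the infimum over connectors to get $\hat d(h(x),h(y))\le f_{x,\rho_1}(y)\le Cd(x,y)(\M\rho_1^q(x)^{1/q}+\M\rho_1^q(y)^{1/q})$, placing $h$ in the Haj\l asz space $M^{1,p}$. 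Note that this requires the Keith--Zhong self-improvement to some $q<p$ so that $(\M g_u^q)^{1/q}\in L^p(\mu)$; your proposal never addresses why the relevant maximal functions are $p$-integrable.
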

Such a lift is necessarily unique (Proposition \ref{uniqlift}). For the notion of a value in the strong sense see Section 2.2


\bigskip\noindent Stability results play an important role in the proof of Theorem \ref{minex}. Having reduced $p$-quasihomotopy to questions of existence of lifts, we seek stability results for sequences of maps admitting a lift to a given covering space. Without any topological assumptions on the domain space $X$ we have to impose further properties on the target side. Specifically the following subconjugacy property of the fundamental group of the covering space (thought of as a subgroup of the fundamental group of the target space) is needed.

For a group $G$ and a subgroup $H$ of $G$ we say that $H$ has the subconjugacy property (with respect to $G$) if, for any sequence $(g_n)\subset G$ there exists some $g\in G$ so that \[ \bigcup_{k\ge 1}\bigcap_{n\ge k}g_n^{-1}Hg_n\le g^{-1}Hg. \] This property is revisited in Section 5.1.

\begin{theorem}\label{stab}
Let $X$ be a compact doubling $p$-Poincar\'e space and $Y$ a complete separable length space with a covering map $\phi:\widehat Y\to Y$ that is a local isometry. Assume $\pi(Y)$ is countable and $\phi_\sharp \pi(\widehat Y)\le \pi(Y)$ satisfies the subconjugacy condition. Suppose $(u_j)$ is a sequence in $\Nem pXY$, each $u_j$ admitting a lift in $\Nem pX{\widehat Y}$, with \[ \sup_j \int_Xg_{u_j}^p\ud\mu<\infty, \] converging in $L^p(X;Y)$ to a map $u\in \Nem pXY$. Then $u$ admits a lift in $\Nem pX{\widehat Y}$.
\end{theorem}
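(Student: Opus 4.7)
The plan is to use Theorem \ref{liftchar} as the bridge: it suffices to exhibit a fundamental system of loops $\F_{x_0}(g_u;\Gamma_0)$ for $u$ and a point $\hat y \in \phi^{-1}(u(x_0))$ with $u_\sharp \F_{x_0}(g_u;\Gamma_0) \le \phi_\sharp \pi(\widehat Y, \hat y)$. The strategy is to transport the subgroup containments supplied by Theorem \ref{liftchar} for each $u_j$ into a common fundamental group $\pi(Y,u(x_0))$, pass to the limit along loops using the uniform energy bound and $L^p$-convergence, and package the result using the subconjugacy hypothesis.

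\textbf{Setup and transport.} After discarding a null set and passing to a subsequence, pick a strong Lebesgue point $x_0 \in X$ for $u$ and every $u_j$ such that $y_j := u_j(x_0) \to y_0 := u(x_0)$; pass to the component of $\widehat Y$ containing the image of a fixed lift. Fix $\hat y_0 \in \phi^{-1}(y_0)$ and set $H := \phi_\sharp \pi(\widehat Y, \hat y_0) \le \pi(Y, y_0)$. Let $\hat u_j \in \Nem pX{\widehat Y}$ be the given lift of $u_j$, set $\hat y_j := \hat u_j(x_0)$, and for large $j$ choose short paths $\sigma_j$ from $y_0$ to $y_j$ inside an evenly covered neighborhood of $y_0$; these induce basepoint-change isomorphisms $\Phi_j : \pi(Y, y_j) \to \pi(Y, y_0)$. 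A standard covering-space computation, comparing the sheet of $\hat y_0$ over $y_j$ with the value $\hat y_j$, produces $(g_j) \subset \pi(Y, y_0)$ with
\[
\Phi_j\bigl(\phi_\sharp \pi(\widehat Y, \hat y_j)\bigr) = g_j^{-1} H g_j,
\]
and Theorem \ref{liftchar} applied to each $u_j$ gives $\Phi_j\bigl(u_{j,\sharp} \F_{x_0}(g_{u_j};\Gamma_j)\bigr) \le g_j^{-1} H g_j$.

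\textbf{Uniform loop convergence --- the main obstacle.} The hard step is to compare $u_\sharp \F_{x_0}(g_u;\Gamma_0)$ with the transported subgroups above. Using $\sup_j \int_X g_{u_j}^p\,\ud\mu < \infty$, a $p$-modulus / Fuglede argument combined with $L^p$-convergence $u_j \to u$ yields, after a further diagonal subsequence, a $p$-exceptional family $\Gamma^{\mathrm{bad}}$ of curves enlarging $\Gamma_0$ and every $\Gamma_j$, along whose complement all the relevant upper gradients are honest upper gradients and $u_j \circ \gamma \to u \circ \gamma$ uniformly on each such $\gamma$. Select fundamental systems $\F_{x_0}(g_u;\Gamma_0)$ and $\F_{x_0}(g_{u_j};\Gamma_j)$ avoiding $\Gamma^{\mathrm{bad}}$, arranged so that each $\gamma \in \F_{x_0}(g_u;\Gamma_0)$ also belongs to every $\F_{x_0}(g_{u_j};\Gamma_j)$. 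For such $\gamma$, the local triviality of the covering $\phi$ (in particular semi-local simple connectivity of $Y$) together with $\sigma_j$ shrinking to $y_0$ and uniform convergence of $u_j \circ \gamma$ implies that $\sigma_j * (u_j \circ \gamma) * \bar\sigma_j$ is homotopic rel $y_0$ to $u \circ \gamma$ for all large $j$; hence
\[
\Phi_j\bigl([u_j \circ \gamma]\bigr) = [u \circ \gamma] \quad \text{in } \pi(Y, y_0) \text{ for all large } j.
\]

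\textbf{Conclusion via subconjugacy.} Combining the two previous displays, each generator $[u \circ \gamma]$ of $u_\sharp \F_{x_0}(g_u;\Gamma_0)$ lies in $\bigcap_{n \ge k} g_n^{-1} H g_n$ for some $k$, so
\[
u_\sharp \F_{x_0}(g_u;\Gamma_0) \le \bigcup_{k \ge 1} \bigcap_{n \ge k} g_n^{-1} H g_n.
\]
Applying the subconjugacy property of $H$ to the sequence $(g_n)$ produces $g \in \pi(Y, y_0)$ with the right-hand side contained in $g^{-1} H g$, and we choose $\hat y \in \phi^{-1}(y_0)$ with $\phi_\sharp \pi(\widehat Y, \hat y) = g^{-1} H g$. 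Then $u_\sharp \F_{x_0}(g_u;\Gamma_0) \le \phi_\sharp \pi(\widehat Y, \hat y)$ and Theorem \ref{liftchar} produces the desired lift $h \in \Nem pX{\widehat Y}$ of $u$. The main technical difficulty is the modulus argument yielding uniform convergence of $u_j \circ \gamma$ on a family rich enough to support the fundamental systems of loops; the rest is essentially functorial covering-space bookkeeping combined with the subconjugacy hypothesis.
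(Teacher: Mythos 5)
Your overall strategy is exactly the paper's: reduce to Theorem \ref{liftchar} in both directions, build a common fundamental system of loops at a good basepoint $x_0$, prove convergence of $u_j\circ\gamma$ along the relevant loops, transport the subgroups $\phi_\sharp\pi(\widehat Y,\hat y_j)$ to conjugates $g_j^{-1}Hg_j$ of a fixed $H$ via short connecting paths, take the $\liminf$ of these conjugates, and close with the subconjugacy hypothesis. So the architecture is sound and matches the paper.

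The one step you overclaim is the ``uniform loop convergence'' you correctly identify as the main obstacle. A Fuglede-type argument gives $\int_\gamma d(u_j,u)\to 0$ for $p$-a.e.\ curve, i.e.\ $L^1$-convergence along the curve, but upgrading this to \emph{uniform} convergence requires equicontinuity of the curves $u_j\circ\gamma$, hence a bound $\sup_j\int_\gamma g_{u_j}^{p/q}<\infty$ (for some $q<p$, so that H\"older gives a modulus of continuity for the arclength functions). Such a bound, after passing to a single subsequence, can only be arranged for a \emph{countable} family of curves: the paper does this via Mazur's lemma, a weighted sum $\sum_k a_k\int_{\gamma^k}g_j^{p/q}$, and a diagonal Arzel\`a--Ascoli argument (Lemma \ref{t7'}). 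Claiming uniform convergence along $p$-a.e.\ curve after one diagonal subsequence is not justified, since the subsequence achieving the curvewise gradient bound depends on the curve. The repair is exactly where the hypothesis that $\pi(Y)$ is countable enters (your sketch never says where it is used): $u_\sharp\F_{x_0}(g_0;\Gamma_0)$ is then countably generated, so one only needs uniform convergence along countably many generating loops $\gamma^k$, and the argument goes through. A second, more minor omission: the curvewise estimates force you to run Theorem \ref{liftchar} at a lower exponent $q<p$ (so that $(\M g^q)^{1/q}\in L^p$ by the maximal function theorem, using Keith--Zhong), which produces a lift only in $N^{1,q}(X;\widehat Y)$; the paper then upgrades it back to $N^{1,p}$ via Lemma \ref{liftgrad} and the Haj\l asz-type pointwise characterization. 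Neither issue changes the route, but both are needed for the proof to actually close.
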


\subsection{Outline of approach}

\subsubsection*{Section 2: Preliminaries} After the introduction, we proceed to present most of the preliminary results needed in the sequel, including basics of Newtonian spaces and Poincar\'e inequalities, some geometric group theory and the \emph{diagonal cover} $\diaco Y$ of a space $Y$. This concept is used in the definition of path representability and plays an important role in connection with $p$-quasihomotopy.

\subsubsection*{Section 3: Path representability} In this section we define the notion of path representable spaces which, for countable CW-complexes, coincides with the notion of $K(\pi,1)$-spaces (Proposition \ref{cw}). The importance of path representable spaces lies in the fact that they satisfy the following universal property.

Let $Y$ be path representable. Then for any topological space $X$, two continuous maps $f,g:X\to Y$ are homotopic if and only if the map $(f,g):X\to Y\times Y$ has a lift $h:X\to \diaco Y$ (Lemma \ref{universal}). Note that Theorem \ref{homchar} is an extension of this universal property to the realm of Newtonian maps.

The section contains some properties of path representable spaces (notably Propositions \ref{kg1} and \ref{cw}) and closes with the proof of Theorem \ref{homchar}.


\subsubsection*{Section 4: Lifts} Theorem \ref{homchar} brings \emph{lifts} into the picture by reducing the question of the existence of a $p$-quasihomotopy to the question of the existence of a lift.

Given a covering map $\phi:\widehat Y\to Y$ that is a local isometry, and a metric measure space $X$, we say that a map $u\in \Nem pXY$ \emph{admits a lift} $h\in \Nem pX{\widehat Y}$ if $\phi\circ h= u$ as maps in $\Nem pXY$.

To characterize existence of lifts we develop an analogue of the classical characterization for continuous maps (see \cite[p. 156, Theorem 5.1]{mas67}). The notion of a \emph{fundamental system of loops} replaces the fundamental group of the domain, and the \emph{induced subgroup of the fundamental system of loops by $u$} replaces the image of the fundamental group under the induced homomorphism of $u$. Note that these definitions do not require a Newtonian map $u\in \Nem pXY$ to induce a homomorphism $\pi(X)\to \pi(Y)$.

The characterization for existence of lifts (Theorem \ref{liftchar}) along with uniqueness of lifts (Proposition \ref{uniqlift}) are proven in Section 4, along with developing the tools for these proofs.

\subsubsection*{Section 5: Stability} The main task in finding energy minimizers in a given $p$-quasihomotopy class is to obtain suitable compactness -- or stability -- properties for these classes. In light of Theorem \ref{homchar}, the main result, Theorem \ref{minex}, can be deduced from Theorem \ref{stab}, a corresponding stability result of a more technical nature for maps admitting a lift to a given covering space. The proof of Theorem \ref{stab} is given in this section.

The technical property, the subconjugacy condition (Definition \ref{subconj}) and some results regarding it are given in the first subsection.

The second subsection contains the proof of Theorem \ref{stab}. After the technical point of passing to slightly lower integrability (Proposition \ref{stabi}), the proof applies Theorem \ref{liftchar}, first to pass from lifts to subgroups. Using these, a suitable limit subgroup is defined and the subconjugacy assumption guarantees that this subgroup satisfies the condition in Theorem \ref{liftchar}, enabling one to pass from the subgroup back to a lift.

The final subsection contains the proof of Theorem \ref{minex}, which uses Theorem \ref{homchar} to reduce the question to showing the existence of a lift, then Theorem \ref{stab} together with Lemmas \ref{noeth} and \ref{hyptor} to show that the energy-minimizing limit map has a lift and Theorem \ref{homchar} again to conclude that it is in the required $p$-quasihomotopy class.

\bigskip\noindent\subsubsection*{Final remark} The methods used here are necessarily quite different than those used in the proofs of the classical analogues of our results even though the general strategy of approaching the problem is certainly inspired by classical theory. The use of geometric group theory, for instance, rids one of the problems of trying to define a homomorphism between fundamental groups induced by a Newtonian map. Incidentally this means that essentially no topological conditions on the domain are required. Naturally this leads to more conditions on the target side but the trade-off is surprisingly favourable. It is also noteworthy that the assumptions of the target side are more of global, rather than local, nature.

\section{Preliminaries}
\subsection{Poincar\'e inequalities}

Throughout this paper we assume that $(X,d,\mu)$ is a metric measure space; that is, a space $X$ with metric $d$ and a \emph{Borel regular} measure $\mu$.
From Section 3 on we assume, in addition, that $(X,d,\mu)$ is a compact doubling metric space supporting a weak $(1,p)$-Poincar\'e inequality, $1 < p<\infty$. We refer to such spaces as (compact, doubling) $p$-Poincar\'e spaces. Doubling here means that the measure $\mu$ is doubling, i.e. that there exists a constant $C<\infty$ so that for all $B(x,r)\subset X$ \[ \mu(B(x,2r))\le C\mu(B(x,r)). \] We always assume $\mu$ to be Borel regular and finite and nonzero on (nontrivial) balls.

The family of all rectifiable curves in a metric measure space $(X,d,\mu)$ is denoted $\Gamma(X)$. Given a family $\Gamma\subset\Gamma(X)$ the \emph{$p$-modulus of $\Gamma$} is defined as \[ \Mod_p(\Gamma;\mu)=\inf\left\{ \int_X\rho^p\ud\mu: 0\le \rho \textrm{ Borel, }\int_\gamma\rho\ge 1\textrm{ for all }\gamma\in \Gamma \right\}. \] When the underlying measure is clear from the context we omit it in the notation and write $\Mod_p(\Gamma)=\Mod_p(\Gamma;\mu)$.

Let $X$ and $Z$ be two metric spaces. A nonnegative Borel function $g:X\to [0,\infty]$ is said to be an \emph{upper gradient} of a map $u:X\to Z$ if, for any rectifiable curve $\gamma:[0,1]\to X$ the inequality 
\begin{equation}\label{ug}
d_Z(u(\gamma(0)),u(\gamma(1)))\le \int_\gamma g
\end{equation} is satisfied. If there is a path family $\Gamma_0$ of zero $p$-modulus so that the estimate (\ref{ug}) holds for all $\gamma\in \Gamma(X)\setminus\Gamma_0$ we say that $g$ is a $p$-weak upper gradient of $u$. If 
\begin{equation} \label{uga}
d_Z(u(\gamma(b)),u(\gamma(a)))\le \int_{\gamma|_{[a,b]}} g,\quad a,b\in [0,1]
\end{equation}
holds for some path $\gamma$ and all $a,b\in [0,1]$ and in addition $\int_\gamma g<\infty$ we say that $g$ is \emph{an upper gradient of $u$ along $\gamma$}. In this case $u\circ\gamma$ is absolutely continuous and  $$ \ell(u\circ\gamma)\le \int_\gamma g.$$ See \cite{haj03} for the definition of line-integrals along rectifiable curves.

We say that a metric space supports a weak $(q,p)$-Poincar\'e inequality, $1\le p,q<\infty$, if there exist constants $C,\sigma$ such that for any pair $(u,g)$ of a locally integrable function $u:X\to \R$ and its locally integrable upper gradient $g:X\to \R$, and a ball $B=B(x,r)\subset X$ of radius $r$, we have \[ \left(\dashint_B|u-u_B|^q\ud\mu\right)^{1/q} \le Cr\left(\dashint_{\sigma B}g^p\ud\mu\right)^{1/p}. \] Here $\sigma B$ is the concentric ball of radius $\sigma r$.

\bigskip\noindent A deep result of Keith and Zhong \cite{zho04} states that, for a complete space, admitting a Poincar\'e inequality is an open ended condition.

\begin{theorem}\label{zho}
Let $(X,d,\mu)$ be a doubling metric measure space such that $X$ is complete. If $X$ supports a weak $(1,p)$-Poincar\'e inequality for some $p>1$ then it supports a weak $(1,q)$-Poincar\'e inequality for some $q<p$ with constants depending only on the data.
\end{theorem}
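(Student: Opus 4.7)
The plan is to follow the Keith--Zhong strategy: reduce the self-improvement to a reverse H\"older type estimate for upper gradients $g$ on super-level sets of a suitable maximal function, and then use a Gehring-type lemma to obtain improved integrability, which in turn yields the Poincar\'e inequality at a lower exponent. First I would set up the standard telescoping. For a ball $B\subset X$, a continuous function $u$ with upper gradient $g$, and $\mu$-a.e.\ $x\in B$, I would chain $x$ to the center through a sequence of dyadic subballs $B_i(x)$ of radii $r_i\sim 2^{-i}r$ and invoke the $(1,p)$-Poincar\'e inequality on each scale to obtain the pointwise bound
\[
|u(x)-u_B|\ \lesssim\ \sum_{i\ge 0} r_i\left(\dashint_{\sigma B_i(x)} g^p\,\ud\mu\right)^{1/p}.
\]
By the standard Hedberg-type trick this is controlled by $r\,(Mg^p(x))^{1/p}$ if one is content with $L^p$ on the right, but the goal is to exchange $g^p$ for $g^q$ with $q<p$ without losing a power of $r$.

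Next I would introduce the stopping-time/good-$\lambda$ machinery that is the heart of Keith--Zhong. Fix a threshold $\lambda>0$ and stop the chain at the first scale where $(\dashint_{\sigma B_i(x)} g^p)^{1/p}>\lambda$; split $B$ into the good set where no stopping occurs and the bad set $E_\lambda$. On the good part the sum telescopes to an estimate essentially linear in $\lambda$, while the measure of $E_\lambda$ is controlled by $\mu(\{Mg^p>\lambda^p\}\cap\sigma B)$ via a Vitali-type covering. Choosing $\lambda$ optimally as a function of $\dashint_{\sigma B} g^q$ (for a candidate $q<p$) and integrating in layer cake fashion, one arrives at the crucial \emph{weak reverse H\"older inequality}
\[
\left(\dashint_B g^p\,\ud\mu\right)^{1/p}\ \le\ C\left(\dashint_{\tau B} g^s\,\ud\mu\right)^{1/s}\ +\ C\,\chi\!\left(\dashint_{\tau B} g^p\,\ud\mu\right)^{1/p}
\]
for some $s<p$ and a small constant $\chi<1$, valid for the level-set truncation of $g$.

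Then I would apply a Gehring-type self-improvement theorem on the doubling space $(X,d,\mu)$ (such lemmas are standard in this category: see e.g.\ the metric versions due to Bj\"orn--Bj\"orn) to the reverse H\"older inequality above. Gehring's lemma converts the weak reverse H\"older from exponent $p$ down to exponent $s$ into local integrability improvement, and more importantly, it gives the dual conclusion: the telescoping estimate actually holds with $g^q$ on the right for some $q\in(s,p)$. Re-running the chaining argument with this improved right-hand side yields
\[
\dashint_B |u-u_B|\,\ud\mu\ \le\ Cr\left(\dashint_{\sigma' B} g^q\,\ud\mu\right)^{1/q},
\]
which is the desired weak $(1,q)$-Poincar\'e inequality.

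The main obstacle is making the stopping-time argument quantitatively independent of the pair $(u,g)$ while extracting a uniform gap $p-q>0$. The measure estimate for the bad set must be strictly sub-linear in $\lambda$, which forces a delicate interplay between the doubling constant, the Poincar\'e constants $(C,\sigma)$, and the Gehring exponent. A second technical point is the reduction to Lipschitz (or continuous) $u$: although this is routine once Newtonian/upper gradient density theory is in place, one must ensure that the $p$-weak upper gradient structure is preserved under the approximation, so that the improved inequality persists on the full Newtonian class.
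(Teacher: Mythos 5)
The paper does not prove this statement at all: it is quoted verbatim from Keith and Zhong and used as a black box (together with \cite[Theorem 9.1.32]{HKST07}) to obtain the weak $(p,q)$-Poincar\'e inequality. So there is no in-paper proof to compare against; what follows measures your sketch against the actual Keith--Zhong argument it is modelled on.

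Your chaining/telescoping setup and the stopping-time, good-$\lambda$ philosophy are the right skeleton, but the pivot of your argument --- a weak reverse H\"older inequality for $g$ itself followed by ``a Gehring-type lemma'' --- does not survive scrutiny. No reverse H\"older inequality can hold for an arbitrary upper gradient $g$ of $u$: if $g$ is an upper gradient then so is $g+h$ for any nonnegative Borel $h$, so any putative integrability self-improvement is destroyed by adding a spike on a set of small measure; even the minimal $p$-weak upper gradient of a generic Newtonian function is merely an $L^p$ function with no higher integrability. Moreover, Gehring's lemma moves integrability \emph{upward} ($L^p\to L^{p+\varepsilon}$), whereas the theorem asks to \emph{lower} the exponent on the right-hand side of the Poincar\'e inequality from $p$ to $q<p$, uniformly over all pairs $(u,g)$ --- a statement about the inequality, not about the integrability of $g$. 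What Keith and Zhong actually self-improve is a maximal-function quantity attached to the pair $(u,g)$, roughly the supremum over restricted scales of $\bigl(\dashint_{B_i(x)} g^p\,\ud\mu\bigr)^{1/p}$ controlling $|u(x)-u_B|$; the good-$\lambda$ estimate is obtained by replacing $u$ with a Lipschitz (McShane) extension off the bad set and re-applying the $(1,p)$-inequality to the truncated pair, and the gap $p-q>0$ emerges from an absorption argument in which completeness and the doubling constant are used to force the coefficient of the bad term below $1$. Without this device your scheme stalls exactly at the point you yourself flag as the ``main obstacle'': there is no object for a Gehring lemma to act on.
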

\begin{proposition}
A compact doubling $p$-Poincar\'e space ($p>1$) supports a weak $(p,q)$-Poincar\'e inequality for some $1\le q<p$.
\begin{proof} This follows from Theorem \ref{zho} and \cite[Theorem 9.1.32]{HKST07}. \end{proof}
\end{proposition}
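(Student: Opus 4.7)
The plan is to combine the open-endedness of the Poincaré inequality (Theorem \ref{zho}) with the Sobolev--Poincaré self-improvement in doubling PI spaces (as formulated in \cite[Theorem 9.1.32]{HKST07}). Since $(X,d,\mu)$ is compact, it is in particular complete, so Theorem \ref{zho} applies: there exists some $q_0 \in (1,p)$ such that $X$ supports a weak $(1,q_0)$-Poincaré inequality, with constants depending only on the doubling constant and the constants of the original $(1,p)$-PI.

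Next, I would invoke the Sobolev--Poincaré improvement from \cite[Theorem 9.1.32]{HKST07}: in a doubling metric measure space supporting a $(1,q_0)$-Poincaré inequality, one automatically obtains a $(q_0^*, q_0)$-Poincaré inequality, where $q_0^*>q_0$ depends on $q_0$ and the doubling dimension (in the subcritical regime, $q_0^* = sq_0/(s-q_0)$ with $s$ the doubling dimension; in the critical/supercritical regime $q_0^*$ can be taken arbitrarily large). The key observation is that $q_0^* \to \infty$ as $q_0 \to s$ from below, and in any case $q_0^* > q_0$, so by choosing $q_0$ sufficiently close to $p$ (still strictly less than $p$, which is possible since the Keith--Zhong theorem produces an interval $(p-\varepsilon,p)$ of admissible lower exponents) we can guarantee $q_0^* \ge p$.

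Finally, set $q := q_0$. Since $q_0^* \ge p$, Hölder's (or Jensen's) inequality applied to the left-hand side yields
\[
\left(\dashint_B |u-u_B|^p\,\ud\mu\right)^{1/p} \le \left(\dashint_B |u-u_B|^{q_0^*}\,\ud\mu\right)^{1/q_0^*} \le Cr\left(\dashint_{\sigma B} g^{q}\,\ud\mu\right)^{1/q},
\]
which is precisely the weak $(p,q)$-Poincaré inequality with $q<p$.

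The only delicate point is verifying that $q_0$ can indeed be chosen close enough to $p$ so that $q_0^* \ge p$; this is immediate because Theorem \ref{zho} gives self-improvement on an open interval of exponents and because the exponent gain $q_0 \mapsto q_0^*$ from \cite[Theorem 9.1.32]{HKST07} is strictly greater than $1$ with quantitative dependence on the data. No further topological input is required beyond compactness (used only to invoke completeness for Theorem \ref{zho}).
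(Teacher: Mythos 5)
Your argument is correct and is exactly the route the paper intends: the paper's proof consists solely of citing Keith--Zhong and the Sobolev--Poincar\'e improvement of \cite[Theorem 9.1.32]{HKST07}, and your write-up supplies the details (including the one genuinely delicate point, namely that the lower exponent $q_0<p$ can be pushed close enough to $p$ — using that a $(1,q_0)$-inequality implies a $(1,q)$-inequality for all $q\in[q_0,p]$ by H\"older on the right-hand side — so that the Sobolev exponent satisfies $q_0^*\ge p$). No gaps.
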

The following result, due to \cite[Theorem 2]{kei03} will prove helpful when studying images of loops under Newtonian maps (loops based on a fixed point have $p$-modulus zero, yet we want to use them to characterize liftability of Newtonian maps).

Denote by $\Gamma_{xy}$ the set of all rectifiable curves joining $x$ and $y$.
\begin{proposition}
Let $(X,d,\mu)$ be a complete metric measure space with a doubling measure $\mu$, supporting a weak $(1,p)$ - Poincar\' e inequality. Then for each distinct $x,y\in X$
\begin{equation}\label{kei}
d(x,y)^{1-p}\le C \Mod_p(\Gamma_{xy};\mu_{xy}).
\end{equation}
Here \[ \mu_{xy}(A)=\int_A\left[\frac{d(x,z)}{\mu(B(x,d(x,z)))}+\frac{d(y,z)}{\mu(B(y,d(y,z)))} \right]\ud\mu(z). \]
\end{proposition}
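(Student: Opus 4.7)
The plan is to establish \eqref{kei} as a consequence of the Poincar\'e hypothesis via a telescoping / Riesz-potential argument. Fix distinct $x,y\in X$ and let $\rho$ be admissible for $\Gamma_{xy}$, i.e.\ $\int_\gamma \rho\ge 1$ for every $\gamma\in\Gamma_{xy}$. First convert the curve bound to a pointwise one by setting
\[
u(z)\;=\;\min\Bigl\{\,1,\ \inf_{\gamma\in\Gamma_{xz}}\int_\gamma\rho\,\Bigr\}.
\]
Completeness together with the Poincar\'e inequality implies quasiconvexity, so $\Gamma_{xz}\neq\emptyset$ for every $z\in X$; hence $u:X\to[0,1]$ is well-defined, satisfies $u(x)=0$ and $u(y)=1$, and admits $\rho$ as a genuine upper gradient.

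The core step is a two-sided dyadic telescoping along the balls $B^x_k=B(x,2^{-k}d(x,y))$ and $B^y_k=B(y,2^{-k}d(x,y))$. Doubling and Lebesgue differentiation give $u_{B^x_k}\to 0$ and $u_{B^y_k}\to 1$, and each difference $|u_{B^x_k}-u_{B^x_{k+1}}|$ is controlled via the Poincar\'e inequality on $B^x_k$. Passing to a $(1,q)$-Poincar\'e inequality with $1<q<p$ (Theorem~\ref{zho}) and applying H\"older on the dyadic annuli, one converts the resulting geometric sum into the Riesz-potential bound
\[
1\;\le\; C\int_{X}\rho(z)\Bigl[\tfrac{d(x,z)}{\mu(B(x,d(x,z)))}+\tfrac{d(y,z)}{\mu(B(y,d(y,z)))}\Bigr]\chi_{B(x,2d(x,y))\cup B(y,2d(x,y))}(z)\,\ud\mu(z).
\]

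Writing $\tilde\mu_{xy}$ for $\mu_{xy}$ restricted to $B(x,2d(x,y))\cup B(y,2d(x,y))$, a direct dyadic-annulus computation using doubling yields $\tilde\mu_{xy}(X)\le Cd(x,y)$. H\"older's inequality then gives
\[
1\;\le\; C\,\tilde\mu_{xy}(X)^{1-1/p}\|\rho\|_{L^p(\mu_{xy})}\;\le\; C\,d(x,y)^{1-1/p}\|\rho\|_{L^p(\mu_{xy})},
\]
so $\|\rho\|_{L^p(\mu_{xy})}^p\ge C^{-p}d(x,y)^{1-p}$; taking the infimum over admissible $\rho$ proves \eqref{kei}. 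The main obstacle is the Riesz-potential conversion in the previous paragraph: turning a chain of Poincar\'e ball averages into a single integral against the doubling kernel naturally calls for a $(1,1)$-Poincar\'e inequality, which is \emph{not} available from the hypotheses; the workaround is to invoke Keith--Zhong to gain some $(1,q)$-PI with $q<p$ and absorb the residual exponent gap through H\"older on the dyadic annuli, at the cost of worsening the constant~$C$.
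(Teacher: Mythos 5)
The paper does not actually prove this proposition: it is quoted from \cite[Theorem 2]{kei03}, so your attempt has to stand on its own. The outer skeleton is fine (replacing an admissible $\rho$ by $u(z)=\min\{1,\inf_{\gamma\in\Gamma_{xz}}\int_\gamma\rho\}$, the estimate $\mu_{xy}(B(x,2d(x,y))\cup B(y,2d(x,y)))\le Cd(x,y)$ via dyadic annuli and doubling, and the concluding H\"older step), but the step you yourself flag as the main obstacle is not merely delicate --- it is false under the stated hypotheses. The inequality $1\le C\int\rho\,\ud\mu_{xy}$ for \emph{every} admissible $\rho$ is exactly the statement $\Mod_1(\Gamma_{xy};\mu_{xy})\ge C^{-1}$, i.e.\ the $p=1$ case of the proposition; by the converse direction of Keith's theorem a uniform bound of this type forces $X$ to support a $(1,1)$-Poincar\'e inequality, which a $(1,p)$-Poincar\'e space need not satisfy and which Keith--Zhong cannot provide (it yields a $(1,q)$-inequality only for \emph{some} $q<p$, never $q=1$ in general). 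Concretely, take the bow-tie $X=\{(x_1,x_2)\in\R^2:x_1x_2\ge0\}$ with Lebesgue measure, a complete doubling space supporting a weak $(1,p)$-Poincar\'e inequality for every $p>2$. Fix $x,y$ in opposite quadrants at distance $\approx 1$ from the origin. Every curve joining them passes through the origin, so $\rho_\varepsilon=\varepsilon^{-1}\chi_{B(0,\varepsilon)}$ is admissible for $\Gamma_{xy}$; the kernels $d(x,z)/\mu(B(x,d(x,z)))$ and $d(y,z)/\mu(B(y,d(y,z)))$ are $\approx 1$ near the origin, so $\int\rho_\varepsilon\,\ud\mu_{xy}\approx\varepsilon^{-1}\mu(B(0,\varepsilon))\approx\varepsilon\to0$. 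No constant $C$ makes your Riesz-potential inequality true there, even though the proposition itself holds (indeed $\int\rho_\varepsilon^p\,\ud\mu_{xy}\approx\varepsilon^{2-p}\to\infty$ for $p>2$).

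The proposed repair cannot close this gap. Telescoping with a $(1,q)$-Poincar\'e inequality produces $\sum_k2^{-k}R\bigl(\dashint_{\sigma B_k}\rho^q\bigr)^{1/q}$, and since $q>1$ Jensen gives $\bigl(\dashint\rho^q\bigr)^{1/q}\ge\dashint\rho$: H\"older only pushes the exponent on $\rho$ upward toward $p$, never down to the first power against the kernel. Even at $q=1$ the telescoped sum equals $\int\rho(z)K(x,z)\,\ud\mu(z)$ with $K(x,z)=\sum_{k:\,z\in\sigma B_k}2^{-k}R/\mu(\sigma B_k)$, and this kernel exceeds $d(x,z)/\mu(B(x,d(x,z)))$ by a factor of order $\log(R/d(x,z))$ on spaces where $\mu(B(x,r))\approx r$ near $x$ (e.g.\ a weighted line), so the discrete-to-continuous Riesz conversion you need is unavailable. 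Any correct argument must keep the exponent $p$ on $\rho$ throughout and target $1\le Cd(x,y)^{1-1/p}\bigl(\int\rho^p\,\ud\mu_{xy}\bigr)^{1/p}$ directly; establishing that estimate is genuinely nontrivial and is the substance of Keith's proof, which the paper simply cites.
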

For future reference we also record the following facts. Here $\M f$ denotes the maximal function, with respect to balls, of a function $f\in L^1_{loc}(X)$
\begin{lemma}\label{facts}Suppose $X$ is compact and $\mu$ is a doubling measure. If $\rho\ge 0$ is a Borel function with $$\M \rho^p(x)+\M \rho^p(y)<\infty$$ then $$\int_X\rho^p\ud\mu_{xy}<\infty.$$ In particular $$\mu_{xy}(X)<\infty.$$ 

\begin{proof}
Denote $r_0=\diam(X)$. For any $x\in X$ we have
\begin{align*}
\int_X\frac{\rho^p(z)d(x,z)}{\mu(B(x,d(x,z))}\ud\mu(z)=& \sum_{k=0}^\infty \int_{B(x,2^{-k+1}r_0)\setminus B(x,2^{-k}r_0)}\frac{\rho^p(z)d(z,x)}{\mu(B(x,d(x,z)))}\ud\mu(z)\\
\le & \sum_{k=0}^\infty \frac{C2^{-k}r_0}{\mu(B(x,2^{-k}r_0))}\int_{B(x,2^{-k}r_0)}\rho^p\ud\mu \le C\M\rho^p(x).
\end{align*}
Thus
\begin{align*}
\int_X\rho^p\ud\mu_{xy} &= \int_X\frac{\rho^p(z)d(x,z)}{\mu(B(x,d(x,z))}\ud\mu(z)+\int_X\frac{\rho^p(z)d(y,z)}{\mu(B(y,d(y,z))}\ud\mu(z)\\
& \le C\M \rho^p(x)+C\M \rho^p(y)
\end{align*}
for $x,y\in X$ and the result follows.
\end{proof}
\end{lemma}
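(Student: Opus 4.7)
The plan is to exploit the standard dyadic-annulus decomposition around $x$ (and symmetrically around $y$). The kernel $K_x(z) = d(x,z)/\mu(B(x,d(x,z)))$ is mildly singular at $z = x$, but on each annulus of the form $\{2^{-k}r_0 < d(x,\cdot) \le 2^{-k+1}r_0\}$ it is essentially constant, with the constant cancelling against the measure of the enclosing ball up to the doubling constant. The maximal function then absorbs the average of $\rho^p$ on each ball, leaving a geometric series in $k$.

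More precisely, set $r_0 = \diam X$ and note that $X \subset B(x, r_0)$, so the annuli $A_k = B(x, 2^{-k+1} r_0) \setminus B(x, 2^{-k} r_0)$ cover $X \setminus \{x\}$, and their union carries the full integral. Write
\[
\int_X \rho^p(z)\,K_x(z)\,\ud\mu(z) = \sum_{k=0}^\infty \int_{A_k} \rho^p(z)\,K_x(z)\,\ud\mu(z).
\]
On $A_k$ one has $d(x,z) \le 2^{-k+1} r_0$ and $B(x,d(x,z)) \supset B(x, 2^{-k} r_0)$, hence
\[
K_x(z) \le \frac{2^{-k+1} r_0}{\mu(B(x, 2^{-k} r_0))} \qquad \text{for } z \in A_k.
\]
Bounding the integration domain by $B(x, 2^{-k+1} r_0)$ and applying the doubling property gives
\[
\int_{A_k} \rho^p\,\ud\mu \le \mu(B(x, 2^{-k+1} r_0))\,\M \rho^p(x) \le C\,\mu(B(x, 2^{-k} r_0))\,\M \rho^p(x),
\]
so the $k$-th term is at most $C\,2^{-k+1} r_0 \,\M \rho^p(x)$.

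Summing the resulting geometric series yields $\int_X \rho^p(z) K_x(z)\,\ud\mu(z) \le C' r_0 \M \rho^p(x) < \infty$. The analogous estimate centered at $y$ gives the corresponding bound with $\M \rho^p(y)$, and adding the two handles the full integrand defining $\mu_{xy}$. The ``in particular'' statement is the special case $\rho \equiv 1$, for which $\M \rho^p \equiv 1$. The only point demanding care is matching the annulus used to bound the kernel below with the enclosing ball used to invoke the maximal function; this costs a single application of doubling per term and is the main technical (but entirely routine) obstacle.
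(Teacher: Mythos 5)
Your proposal is correct and follows essentially the same argument as the paper: the dyadic annulus decomposition around $x$, the pointwise kernel bound on each annulus, one application of doubling to match the annulus with the ball fed to the maximal function, and summation of the geometric series. Your explicit doubling step and the observation that the final claim is the case $\rho\equiv 1$ are just slightly more careful renderings of what the paper does.
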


\bigskip\noindent We will also consider Banach space valued mappings. For them it is useful to have the following result \cite[Theorem 8.1.42]{HKST07}
\begin{theorem}
Suppose $(X,d,\mu)$ is a complete doubling metric measure space. Then $X$ supports a weak $(1,p)$-Poincar\'e inequality for $p>1$ if and only if it supports a weak $(1,p)$-Poincar\'e inequality for $V$-valued maps, for any Banach space $V$, i.e. if there are constants $C',\sigma'\in [1,\infty)$ such that for every locally integrable map $u:X\to V$ and every upper gradient $g$ of $u$ the inequality \[ \dashint_B\|u-u_B\|_V\ud\mu \le C' r \left(\dashint_{\sigma' B}g^p\ud\mu\right)^{1/p} \] holds. The constants $ C'$ and $\sigma'$ depend only on $p$, the data of $X$ and the Banach space $V$.
\end{theorem}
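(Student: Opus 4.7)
The forward direction is immediate by taking $V=\R$. For the converse, the key observation is that the Poincar\'e inequality on a complete doubling space admits a characterization in purely metric terms -- via curves and line integrals of upper gradients -- that is insensitive to whether the target is $\R$ or a Banach space $V$. The difficulty $\|u-u_B\|_V$ cannot be handled by naive duality (Hahn--Banach would produce a functional depending on the integration variable), so instead one transfers a pointwise estimate proved in the scalar case.

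First, apply Theorem \ref{zho} to improve the scalar $(1,p)$-Poincar\'e inequality to a scalar $(1,q)$-Poincar\'e inequality on $X$ for some $q<p$. A standard consequence of a $(1,q)$-Poincar\'e inequality on a complete doubling space is the pointwise Haj\l{}asz-type estimate
\[
 |u(x)-u(y)|\le Cd(x,y)\bigl(\M(g^q\chi_{\sigma B})(x)^{1/q}+\M(g^q\chi_{\sigma B})(y)^{1/q}\bigr)
\]
for a scalar function $u$ with upper gradient $g$, valid for $\mu$-a.e.\ pair $x,y$ in a ball $B$ of radius $r$. This estimate is proved by iterating the $(1,q)$-Poincar\'e inequality along a telescoping chain of balls; each step invokes only the fact that $|u(\gamma(1))-u(\gamma(0))|\le \int_\gamma g$ along a modulus-generic rectifiable curve $\gamma$ in $X$. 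Replacing $|\cdot|$ by $\|\cdot\|_V$ does not affect the derivation, since the upper gradient inequality is purely metric. Consequently the same estimate holds with $|u(x)-u(y)|$ replaced by $\|u(x)-u(y)\|_V$ whenever $u\in L^1_{\mathrm{loc}}(X;V)$ has upper gradient $g$.

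From this Banach-valued pointwise bound the desired inequality follows by double integration:
\[
 \dashint_B\|u-u_B\|_V\,\ud\mu \le \dashint_B\dashint_B\|u(x)-u(y)\|_V\,\ud\mu(x)\ud\mu(y),
\]
where the right-hand side is controlled by $r$ times a sum of $L^1(B)$-norms of $\M(g^q\chi_{\sigma B})^{1/q}$. Since $q<p$, the operator $\M$ is bounded on $L^{p/q}$ (this is where the Keith--Zhong room is essential), and H\"older's inequality then produces the desired bound with $g^p$ on the right, with constants depending on $p$, the data of $X$, and only implicitly on $V$ through the above derivation. The main obstacle is verifying that the curve-based derivation of the pointwise estimate is genuinely target-agnostic: one has to track that the exceptional curve family of zero $p$-modulus along which the upper gradient inequality may fail depends only on $X$ and not on $V$, and that measurability and line integrability of $u\circ\gamma$ behave correctly when $V$ is an arbitrary, possibly non-separable, Banach space. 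This is typically handled either by reducing to a separable subspace containing the essential range of $u$ and applying Pettis-type measurability, or by finite-dimensional approximation followed by passage to the limit using lower semicontinuity of the energy.
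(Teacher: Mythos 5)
The paper does not actually prove this statement: it is quoted verbatim from \cite[Theorem 8.1.42]{HKST07}, so there is no in-paper argument to compare against. Judged on its own, your outline has the right architecture for the nontrivial direction (a pointwise maximal-function estimate for $V$-valued maps, followed by double integration over $B\times B$ and an integrability argument for the maximal function), but the pivotal step --- the $V$-valued pointwise estimate --- is not actually justified, and the justification you offer is circular.

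You claim the estimate $\|u(x)-u(y)\|_V\le Cd(x,y)\bigl(\M(g^q\chi_{\sigma B})(x)^{1/q}+\M(g^q\chi_{\sigma B})(y)^{1/q}\bigr)$ carries over to Banach targets because the telescoping-chain proof ``invokes only the fact that $|u(\gamma(1))-u(\gamma(0))|\le\int_\gamma g$.'' That is not what the chaining argument uses: each step of the telescope, $|u_{B_{k+1}}-u_{B_k}|\le C\dashint_{B_k}|u-u_{B_k}|\ud\mu\le Cr_k\bigl(\dashint_{\sigma B_k}g^q\ud\mu\bigr)^{1/q}$, is an application of the Poincar\'e inequality to $u$ itself on the ball $B_k$, and for $V$-valued $u$ that is precisely the inequality being proven. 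The missing idea that breaks the circle is the composition trick: for fixed $c\in V$ the \emph{real-valued} function $w_c(x)=\|u(x)-c\|_V$ has $g$ as an upper gradient, since $|w_c(\gamma(1))-w_c(\gamma(0))|\le\|u(\gamma(1))-u(\gamma(0))\|_V\le\int_\gamma g$ by the reverse triangle inequality. Applying the scalar pointwise estimate to $w_{u(y)}$, for a.e.\ $y$ (using that a locally Bochner-integrable $u$ is essentially separably valued, so a.e.\ $y$ is a Lebesgue point and $w_{u(y)}(y)=0$), yields the $V$-valued pointwise bound, after which your double integration goes through. Two smaller points: the Keith--Zhong improvement to $q<p$ is a legitimate way to make the maximal operator bounded, but it is not needed --- the weak $(1,1)$ inequality plus a Cavalieri/truncation argument already gives $\dashint_B(\M(g^p\chi_{\sigma B}))^{1/p}\ud\mu\le C\bigl(\dashint_{\sigma B}g^p\ud\mu\bigr)^{1/p}$ for $p>1$; and of the two fixes you propose for measurability, the separable-range reduction is the correct one, whereas ``finite-dimensional approximation'' is not available in a general Banach space.
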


\subsection{Newtonian spaces}

\noindent Let us recall some facts about Newtonian spaces. Suppose $X$ is a doubling $p$-Poincar\'e space and $V$ a Banach space. The Newtonian space $\Nem pXV$ consists of equivalence classes of measurable mappings $u:X\to V$ for which there exists a $p$-integrable upper gradient $g$. The equivalence relation in question is slightly different from the usual identification of almost everywhere agreeing maps: $u$ and $v$ are equivalent if $\|u-v\|=0$ outside a set of $p$-capacity zero, i.e. $\|u-v\|=0$ \emph{$p$-quasieverywhere}. For details and further discussion see \cite{hei01, HKST07}. The local Newtonian space $\Neml pXV$ consists of equivalence classes of \emph{locally} $p$-integrable maps $u:X\to V$ which have an upper gradient $g\in L^p_{loc}(\mu)$.

The space $\Nem pXY$ equipped with the norm \[ \|u\|_{1,p}^p= \|u\|_{L^p(X;V)}^p+\inf_g \|g\|_{L^p(\mu)}^p \] (infimum taken over all $p$-integrable upper gradients of $u$) is a Banach space. If $u\in \Nem pXV$ there is a minimal $p$-weak upper gradient, denoted $g_u\in  L^p(\mu)$ with the property that $g_u$ is a $p$-weak upper gradient of $u$ and if $g$ is any other $p$-integrable upper gradient of $u$ then $g_u\le g$ almost everywhere, see \cite{hei01, HKST07, bjo11}.


\bigskip\noindent For a separable metric space $Y$ we consider the isometric Kuratowski embedding $\lambda : Y\hookrightarrow \ell^\infty$. Define \[ \Nem pXY = \{ u\in \Nem pX{\ell^\infty}: u(x)\in \lambda(Y)\textrm{ for $p$-quasievery }x\in X \} \] and similarly \[ \Neml pXY = \{ u\in \Neml pX{\ell^\infty}: u(x)\in \lambda(Y)\textrm{ for $p$-quasievery }x\in X \}.  \]

\begin{lemma}\label{leb}
Suppose $g$ is a $p$-weak upper gradient of $u\in \Neml pXZ$, where $Z$ is a separable complete metric space and $(X,d,\mu)$ is a doubling metric measure space supporting a weak $(1,p)$-Poincar\'e inequality. If $$x_0\notin \{\M g^p=\infty \}$$ then $x_0$ is a strong Lebesgue point of $u$ in the sense that there exists a unique $a(x_0)\in Y$ so that  \[ \lim_{r\to 0}\dashint_{B(x_0,r)}d^p(u,a(x_0))\ud\mu=0. \]
\begin{proof} Clearly there can be at most one point $a(x_0)$ with the property in the claim. To prove there exists one we consider $Z$ as a closed subset of $\ell^\infty$. Let $0<r\le R$ and $k_0$ the largest integer with the property $r\le 2^{-k}R$. Denoting $B_k=B(x_0,2^{-k}R)$ we have \[ |u_{B(x_0,R)}-u_{B(x,r)}|\le \sum_{k=1}^{k_0}|u_{B_k}-u_{B_{k-1}}|+|u_{B_{k_0}}-u_{B(x_0,r)}|. \] Each term in the sum can be estimated by \[ |u_{B_k}-u_{B_{k-1}}| \le C\dashint_{B_{k-1}}|u-u_{B_{k-1}}|\ud\mu\le C2^{-k}R\M g^p(x_0)^{1/p}  \] where $C>0$ depends only on $p$ and the data of $X$. The last term is estimated similarly by \[ |u_{B_{k_0}}-u_{B(x_0,r)}|\le C2^{-{k_0}}R\M g^p(x_0)^{1/p}. \] Therefore we have the estimate \[ |u_{B(x_0,R)}-u_{B(x_0,r)}|\le \sum_{k=0}^{k_0}C2^{-k}R\M g^p(x_0)^{1/p}=CR\M g^p(x_0)^{1/p}. \] Thus $(u_{B(x_0,R)})_{R>0}\subset \ell^\infty$ is a Cauchy net and converges to a point $a(x_0)\in \ell^{\infty}$.

Now we have the estimate
\begin{align*}
|u-a(x_0)|&\le |u-u_{B(x_0,R)}|+\sum_{k=1}^\infty |u_{B_k}-u_{B_{k-1}}|\\
&\le |u-u_{B(x_0,R)}|+C\sum_{k=1}^\infty 2^{1-k}R\M g^p(x_0)^{1/p}\\
&=|u-u_{B(x_0,R)}|+CR\M g^p(x_0)^{1/p}.
\end{align*}
Thus 
\begin{align}\label{ineq}
&\lim_{R\to 0}\left(\dashint_{B(x_0,R)}|u-a(x_0)|^p\ud\mu\right)^{1/p} \\
\le &\lim_{R\to 0}\left[CR\left(\dashint_{B(x_0,\sigma R)}g_ u^p\ud\mu\right)^{1/p}+ CR \M g^p(x_0)^{1/p}\right]=0.\nonumber
\end{align}

To see that $a(x)\in Z$ we note that $$ \dist_{\ell^\infty}(Z,a(x_0))\le |u(x)-a(x_0)|$$ for $p$-quasievery $x\in X$. Applying the inequality (\ref{ineq}) it follows that $$\dist_{\ell^\infty}(Z,a(x_0))=0$$ and the claim follows.
\end{proof}
\end{lemma}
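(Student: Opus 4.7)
The plan is to exploit the Kuratowski embedding $Z \hookrightarrow \ell^\infty$ so that we can take averages of $u$ as a map into a Banach space, and then leverage the Poincar\'e inequality for Banach space valued maps (the theorem quoted just before Newtonian spaces) to prove that the averages form a Cauchy net as the radius shrinks.

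First I would fix a radius $R > 0$ and consider, for $0 < r \le R$, the dyadic sequence of balls $B_k := B(x_0, 2^{-k}R)$ with $k_0$ chosen so that $2^{-k_0-1}R < r \le 2^{-k_0}R$. Writing the telescoping decomposition
\[
u_{B(x_0,R)} - u_{B(x_0,r)} = \sum_{k=1}^{k_0}\bigl(u_{B_{k-1}}-u_{B_k}\bigr) + \bigl(u_{B_{k_0}}-u_{B(x_0,r)}\bigr),
\]
each term $\|u_{B_{k-1}} - u_{B_k}\|_{\ell^\infty}$ is controlled, via doubling and the Banach-valued $(1,p)$-Poincar\'e inequality, by a constant times $2^{-k}R(\M g^p(x_0))^{1/p}$. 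Summing the geometric series yields
\[
\|u_{B(x_0,R)} - u_{B(x_0,r)}\|_{\ell^\infty} \le C R (\M g^p(x_0))^{1/p},
\]
so under the hypothesis $\M g^p(x_0) < \infty$ the net $\{u_{B(x_0,R)}\}_{R>0}$ is Cauchy in $\ell^\infty$. Let $a(x_0) \in \ell^\infty$ denote its limit.

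Next I would prove the integral Lebesgue-point statement. Letting $r \to 0$ in the estimate above, one gets $\|u_{B(x_0,R)} - a(x_0)\|_{\ell^\infty} \le CR(\M g^p(x_0))^{1/p}$. Combined with the Poincar\'e inequality applied on $B(x_0,R)$,
\[
\left(\dashint_{B(x_0,R)} \|u - a(x_0)\|^p\, d\mu\right)^{1/p} \le \left(\dashint_{B(x_0,R)} \|u - u_{B(x_0,R)}\|^p\, d\mu\right)^{1/p} + CR(\M g^p(x_0))^{1/p},
\]
both terms on the right tend to $0$ as $R \to 0$ (for the first one uses the $(p,q)$-Poincar\'e inequality coming from the Keith--Zhong self-improvement, or directly the pointwise estimate iterated as above).

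The remaining issue, and the one that requires a little care, is that a priori $a(x_0) \in \ell^\infty$ rather than in $Z$. Here I would use that $u(x) \in Z$ for $p$-quasievery $x$, in particular for $\mu$-almost every $x$, so
\[
\dist_{\ell^\infty}(a(x_0), Z) \le \|u(x) - a(x_0)\|_{\ell^\infty} \quad \text{for a.e. } x.
\]
Taking the $L^p$-average over $B(x_0,R)$ and letting $R \to 0$, the just-established convergence forces $\dist_{\ell^\infty}(a(x_0), Z) = 0$, and since $Z$ is complete (hence closed in $\ell^\infty$), $a(x_0) \in Z$. Uniqueness is immediate from the defining property. The principal technical point is packaging the Poincar\'e inequality to get a uniform control over dyadic scales; this is a standard chaining argument but needs the Banach-valued version of the inequality, which is why the preceding theorem is invoked.
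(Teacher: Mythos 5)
Your proposal is correct and follows essentially the same route as the paper's proof: Kuratowski embedding into $\ell^\infty$, telescoping over dyadic balls, the Banach-valued Poincar\'e inequality combined with the maximal function bound at $x_0$ to get the Cauchy net, and then the distance-to-$Z$ argument using that $u(x)\in Z$ quasieverywhere and $Z$ is closed in $\ell^\infty$. The only cosmetic difference is that you split the final estimate via the triangle inequality in the $L^p$-average and invoke the self-improved $(p,q)$-Poincar\'e inequality, whereas the paper iterates the pointwise chaining bound before integrating; both are valid.
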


\noindent We say that $a(x_0)$ is the strong value of $u$ at $x_0$. From now on it is understood without further mention that the value $f(x_0)$ of a Newtonian mapping $f$ at a Lebesgue point $x_0$ is the strong value.

\begin{theorem}[Rellich-Kondrachov]\label{rellich}
Let $X$ be a doubling metric measure space supporting a weak $(1,p)-$ Poincar\'e inequality, and $Y$ a proper metric space. If $(u_j)$ is a sequence in $\Neml pXY$, and $v\in \Neml pXY$ with \[ \sup_{j}\left[\int_Bd_Y^p(v,u_j)\ud\mu+\int_{5\sigma B}g_{u_j}^p\ud\mu\right] <\infty, \] for a given ball $B\subset X$, then there is a subsequence (denoted by the same indices) and $u\in L^p(B;Y)$ so that $$\|u_j-u\|_{L^p(B;Y)}\to 0$$ as $j\to \infty$ and, moreover, $$\int_{ B}g_u^p\ud\mu\le \liminf_{j\to \infty}\int_{ B}g_{u_j}^p\ud\mu.$$
\end{theorem}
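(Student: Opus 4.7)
The plan is to apply the Ambrosio-type reduction that translates compactness of metric-valued Newtonian maps to the scalar case via distances to a dense countable set. Since $Y$ is proper it is separable; fix $\{y_k\}\subset Y$ dense. For each $k$ set $f_j^k(x):=d_Y(y_k,u_j(x))$. The reverse triangle inequality gives that $g_{u_j}$ is an upper gradient of $f_j^k$, and $f_j^k(x)\le d_Y(y_k,v(x))+d_Y(v(x),u_j(x))$ combined with the hypothesis yields a uniform bound on $(f_j^k)_j$ in $\Nem pB\R$ for each fixed $k$.

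Apply the scalar Rellich-Kondrachov theorem for Newtonian functions (a standard consequence of the $(1,p)$-Poincar\'e inequality and doubling) to each $(f_j^k)_j$, then diagonalize in $k$ to obtain a subsequence, still denoted $(u_j)$, and functions $f^k\in L^p(B)$ so that $f_j^k\to f^k$ in $L^p(B)$ and $\mu$-a.e.\ simultaneously for every $k$. I build the limit map $u$ pointwise as follows: on a full-measure set, Fatou applied to $\sup_j\int_B d_Y^p(v,u_j)\ud\mu$ confines $(u_j(x))_j$ to a bounded subset of $Y$; by properness every subsequence of $(u_j(x))$ has a convergent sub-subsequence, and any accumulation point $z$ must satisfy $d_Y(y_k,z)=f^k(x)$ for every $k$. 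Density of $\{y_k\}$ forces $z$ to be unique, hence $u_j(x)\to u(x)\in Y$ a.e. The Sobolev-Poincar\'e inequality implied by the $(1,p)$-Poincar\'e inequality and doubling gives a uniform $L^{p^*}(B)$-bound on $d_Y(u_j,v)$ for some $p^*>p$; this equi-integrability together with the a.e.\ convergence yields, via Vitali's convergence theorem, $u_j\to u$ in $L^p(B;Y)$.

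For the lower semicontinuity of the energy, extract a weakly $L^p(5\sigma B)$-convergent subsequence of $(g_{u_j})$ with limit $g$. Mazur's lemma produces convex combinations of tails converging strongly to $g$ in $L^p(5\sigma B)$, and Fuglede's lemma then gives convergence of their line integrals along $p$-almost every curve. Passing to the limit in the upper gradient inequality for $f_j^k$ shows that $g$ is a $p$-weak upper gradient of each $f^k=d_Y(y_k,u)$; using the identity $d_Y(u(\gamma(0)),u(\gamma(1)))=\sup_k|f^k(\gamma(0))-f^k(\gamma(1))|$ and density of $\{y_k\}$ promotes $g$ to a $p$-weak upper gradient of $u$. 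Hence $\int_B g_u^p\ud\mu\le\int_B g^p\ud\mu\le\liminf_j\int_B g_{u_j}^p\ud\mu$ by weak lower semicontinuity of the $L^p$-norm.

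The principal obstacle is the pointwise reconstruction of $u$ from the scalar limits $(f^k)$: without properness of $Y$ the sequence $(u_j(x))$ could fail to have any accumulation point in $Y$, so properness is used critically to produce a candidate limit at each point, while density of $\{y_k\}$ together with the scalar limits pins it down uniquely and secures measurability via the Kuratowski coordinates. A secondary subtlety is the Mazur-Fuglede step in the lower semicontinuity argument, required to upgrade weak $L^p$-convergence of the upper gradients to genuine curve-by-curve upper-gradient inequalities for $u$.
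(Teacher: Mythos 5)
The paper does not actually prove this theorem: it is quoted from the literature, with pointers to \cite{haj00}, \cite{kor93} and \cite{teri}. Your argument is essentially the standard proof underlying those references and it is sound: reduction to real-valued functions by post-composing with distances to a countable dense subset of $Y$, the scalar Rellich--Kondrachov theorem (a consequence of doubling and the $(1,p)$-Poincar\'e inequality) plus a diagonal argument, properness of $Y$ to reconstruct the limit map pointwise, and the Mazur--Fuglede scheme for lower semicontinuity of the energy (this last step is precisely \cite[Lemma 3.1]{kal01}, which the paper itself invokes in the proof of Proposition \ref{stabi}). Two minor points deserve attention. First, Fatou applied to $\sup_j\int_B d_Y^p(v,u_j)\ud\mu$ only yields $\liminf_j d_Y(v(x),u_j(x))<\infty$ a.e., which bounds a subsequence of $(u_j(x))_j$, not the whole sequence; however, the a.e.\ convergence of $f^1_j(x)=d_Y(y_1,u_j(x))$ that you have already arranged does confine the full sequence to a bounded set, so properness applies as intended. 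Second, the Sobolev--Poincar\'e exponent $p^*>p$ is only available in that form when $p$ is below the homogeneous dimension of the doubling measure; for larger $p$ one gets even stronger (exponential or H\"older) estimates, so the equi-integrability needed for Vitali persists in all cases --- alternatively, equi-integrability of $d_Y^p(u_j,v)$ follows without any embedding theorem from the $L^p(B)$-convergence of $w_j=d_Y(u_j,v)$ (itself a bounded sequence in $N^{1,p}(B)$, hence precompact by the scalar result) combined with generalized dominated convergence. Neither point is a genuine gap.
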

For a proof see \cite[Theorem 8.3]{haj00}, \cite[Theorem 1.3]{kor93} and \cite[Theorem 1.11]{teri}.

\subsection{Some geometric group theory}

\noindent Here we collect some basic concepts in geometric group theory. We adopt the conventions and terminology from \cite{bri99}, particularly Chapter I.8.
\begin{definition}
A group $G$ is said to satisfy the ACC (Ascending Chain Condition) for a family $\mathcal G$ of subgroups of $G$ if, given any ascending chain $H_1\le H_2\le \ldots \le G$ in $\mathcal G$ stabilizes, i.e. there exists a number $n_0\in \N$ so that $H_n=H_{n_0}$ for all $n\ge n_0$.
\end{definition}

\begin{definition}
A group $G$ is said to be Noetherian if it satisfies the ACC for all subgroups.
\end{definition}
\begin{remark}
A group is Noetherian if and only if any subgroup $H$ of $G$ is finitely generated (including $G$ itself).
\end{remark}

\begin{definition}\label{hyperbolic}
A finitely generated group $G$ is said to be \emph{hyperbolic} if the Cayley graph $\mathcal C_S(G)$ with respect to some finite generating set equipped with the word metric is a Gromov-hyperbolic space.
\end{definition}
One can equivalently demand the condition with respect to \emph{all} finite generating sets. See \cite[Chapter I.8]{bri99} and \cite[Chapter III.H]{bri99} for definitions and basic facts of Cayley graphs and Gromov-hyperbolic spaces.

\begin{proposition}\label{svarc}(Svarc-Milnor,\cite[Chapter I.8, Proposition 8.19]{bri99})
Let $X$ be a length space. If a group $G$ acts properly and cocompactly by isometries on $X$ then $G$ is finitely generated and for any $x_0\in X$ the map $g\mapsto g.x_0: G\to X$ is a quasi-isometry.
\end{proposition}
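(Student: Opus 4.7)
The plan is to follow the classical Svarc-Milnor argument and produce a finite generating set $S$ for $G$ together with explicit linear estimates on the word metric in both directions.

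First I would use cocompactness to fix a radius $R>0$ such that the closed ball $\bar B(x_0,R)$ has translates under $G$ covering $X$. Next I would define
\[ S := \{g\in G : g.\bar B(x_0,2R)\cap \bar B(x_0,2R)\neq \emptyset\}, \]
which is finite by the properness assumption, and claim that $S$ generates $G$ and provides the quasi-isometry constants.

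To see that $S$ generates, I take an arbitrary $g\in G$ and use the length-space hypothesis: for any $\varepsilon>0$ pick a rectifiable path from $x_0$ to $g.x_0$ of length at most $d(x_0,g.x_0)+\varepsilon$, and subdivide it into points $x_0=y_0,y_1,\dots,y_n=g.x_0$ with consecutive distances at most $R$ (so $n\lesssim d(x_0,g.x_0)/R+1$). Each $y_i$ lies in some translate $g_i.\bar B(x_0,R)$ with $g_0=e$ and $g_n=g$. A short triangle-inequality computation shows $g_i^{-1}g_{i+1}$ moves a point of $\bar B(x_0,R)$ a distance at most $2R$ from $x_0$, hence lies in $S$. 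Telescoping gives $g=(g_0^{-1}g_1)\cdots(g_{n-1}^{-1}g_n)\in \langle S\rangle$ and simultaneously yields $|g|_S\le C_1 d(x_0,g.x_0)+C_2$ for constants depending only on $R$.

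The reverse estimate is essentially trivial: letting $D=\max_{s\in S}d(x_0,s.x_0)<\infty$, the triangle inequality and $G$-invariance of $d$ give $d(x_0,g.x_0)\le D|g|_S$. Finally, the orbit $G.x_0$ is $R$-dense in $X$ by cocompactness, so the orbit map $g\mapsto g.x_0$ is coarsely surjective and thus a genuine quasi-isometry. The main subtlety, and the only step where the length-space assumption is actually used, is the path subdivision argument that produces the $S$-word representation of $g$; everything else is straightforward bookkeeping.
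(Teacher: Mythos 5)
The paper does not prove this statement; it is quoted verbatim from Bridson--Haefliger \cite[Chapter I.8, Proposition 8.19]{bri99}, and your argument is exactly the standard Svarc--Milnor proof given there: a finite generating set $S$ built from overlapping ball translates, the word-length upper bound obtained by subdividing an almost-geodesic (the only place the length-space hypothesis is used), and the easy reverse bound $d(x_0,g.x_0)\le D|g|_S$ together with coarse surjectivity of the orbit map. The one point worth a word is the finiteness of $S$: a general length space need not have compact closed balls, so one cannot directly apply properness to $\bar B(x_0,2R)$; instead one combines the local finiteness built into the definition of a proper action with cocompactness, exactly as in the cited reference, so this is a presentational rather than a mathematical gap.
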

It is a consequence of the Svarc-Milnor lemma and the quasi-isometry invariance of Gromov hyperbolicity that a finitely generated group $G$ is hyperbolic if and only if it acts properly and cocompactly by isometries on a Gromov-hyperbolic space.

\begin{proposition}\cite[Chapter II.4, Theorem 4.13]{bri99}\label{torfree}
Let $G=\pi(Y)$ be the fundamental group of a complete nonpositively curved space $Y$. Then $G$ is torsion free.
\end{proposition}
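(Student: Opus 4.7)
The plan is to pass to the universal cover and exploit the fact that the fundamental group acts freely by isometries on a CAT(0) space. Recall that a complete nonpositively curved space is, by definition, a complete length space that is locally CAT(0). Its universal cover $\widetilde Y$, equipped with the lifted length metric, is then a complete, simply connected, locally CAT(0) space, and by the Cartan--Hadamard theorem for metric spaces it is globally CAT(0).

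Next, the fundamental group $G = \pi(Y)$ acts on $\widetilde Y$ by deck transformations, which are isometries with respect to the lifted length metric. This action is properly discontinuous and, crucially, \emph{free}: no non-identity deck transformation fixes a point in the universal cover. This is the standard covering-space property that we will play off against a fixed-point argument.

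Now suppose $g \in G$ has finite order $n \ge 1$. Then the cyclic subgroup $\langle g \rangle \le G$ is finite, and I claim that any finite group of isometries acting on a complete CAT(0) space has a common fixed point. To see this, pick any $x \in \widetilde Y$ and consider its finite orbit $\{g^k.x : k = 0,\ldots,n-1\}$; as a bounded subset of a complete CAT(0) space it admits a unique circumcenter (the minimizer of the function $y \mapsto \max_k d(y, g^k.x)$, whose existence and uniqueness follow from the strict convexity of the squared distance function on CAT(0) spaces). The uniqueness of this circumcenter forces it to be preserved by each element of $\langle g \rangle$, and in particular it is fixed by $g$. Combined with freeness of the deck action, this forces $g$ to be the identity, so $G$ is torsion free.

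The substantive step is the circumcenter / Bruhat--Tits fixed-point construction; once that standard CAT(0) tool is in hand, the proof is essentially a one-line deduction from the freeness of the universal covering action. The other ingredient to be invoked rather than proved is Cartan--Hadamard in the metric setting, which upgrades the local CAT(0) hypothesis on $Y$ to a global CAT(0) statement about $\widetilde Y$.
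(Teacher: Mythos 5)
Your proof is correct and is essentially the argument of the cited source (Bridson--Haefliger II.4.13), which the paper itself does not reprove: Cartan--Hadamard makes the universal cover a complete CAT(0) space, the Bruhat--Tits circumcenter argument gives a fixed point for any finite subgroup of deck transformations, and freeness of the deck action kills any torsion element. No gaps.
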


\begin{proposition}\cite[Chapter III.$\Gamma$, Corollary 3.10]{bri99}\label{zed}
Let $G$ be a finitely generated hyperbolic group. Then for any element $g\in G$ of infinite order the cyclic subgroup $\langle g\rangle$ generated by $g$ has finite index in the centralizer $C(g)$.
\end{proposition}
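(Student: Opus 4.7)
The plan is to exploit the geometry of the Cayley graph $\mathcal{C}_S(G)$ together with the dynamics of $g$ on the Gromov boundary $\partial G$. First, I would establish that the orbit map $n\mapsto g^n$ is a quasi-isometric embedding $\Z\to \mathcal{C}_S(G)$. This is the standard fact that infinite order elements of a hyperbolic group are ``undistorted'': if the orbit failed to be quasi-geodesic, one could find arbitrarily long triangles with sides in $\langle g\rangle$ violating the thin triangles condition, or equivalently, using the stable translation length, one could produce unboundedly short conjugates of some fixed power of $g$, contradicting properness of the action of $G$ on itself.

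Once the quasi-axis $A_g := \{g^n\cdot e : n\in \Z\}$ is known to be quasi-geodesic, it has two distinct endpoints $g^{+},g^{-}\in \partial G$ (the limits as $n\to\pm\infty$). Next, I would show that every $h\in C(g)$ must preserve the pair $\{g^{+},g^{-}\}$ setwise. Indeed, left translation by $h$ is an isometry of $\mathcal{C}_S(G)$ extending to a homeomorphism of $\partial G$; since $hg^n = g^n h$, the translated quasi-axis $hA_g$ lies at Hausdorff distance $\le |h|_S$ from $A_g$, and two quasi-geodesics at finite Hausdorff distance in a hyperbolic space share endpoints at infinity. Hence $C(g)\le \mathrm{Stab}_G\{g^{+},g^{-}\}$.

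The heart of the argument is then to show $\langle g\rangle$ has finite index in $C(g)$ by an explicit quasi-projection. Since $A_g$ is quasi-geodesic, there is a constant $K$ so that for each $h\in C(g)$ the closest-point set of $h$ in $A_g$ is nonempty and of diameter $\le K$; pick $n(h)\in \Z$ with $d(h, g^{n(h)})$ minimal. I would verify, using the Morse/stability lemma for quasi-geodesics in hyperbolic spaces together with $hA_g\subset N_K(A_g)$, that $h\mapsto n(h)$ is a coarse homomorphism $C(g)\to \Z$ (additive up to bounded error), and that the map $h\mapsto h\cdot g^{-n(h)}$ takes values in a bounded subset of $G$. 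By properness of the $G$-action on itself, this bounded set contains only finitely many group elements, so $C(g)$ is covered by finitely many cosets of $\langle g\rangle$.

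The main obstacle I expect is the final step: controlling $h\cdot g^{-n(h)}$ uniformly over all $h\in C(g)$. One has to show that the closest-point projection from $N_K(A_g)$ onto $A_g$ is ``coarsely $\langle g\rangle$-equivariant'' along $C(g)$, and this is where the commutation $hg=gh$ — as opposed to mere preservation of $\{g^\pm\}$ — enters decisively: it forces the shift $n(h^2)$ to equal $2n(h)$ up to the bounded error $K$, which after passing to a suitable power turns $n(\cdot)$ into an honest homomorphism with image a finite-index subgroup of $\Z$ and kernel finite by the properness argument. The alternative, cleaner but less self-contained, route would be to invoke the classification of elementary subgroups of hyperbolic groups (any subgroup fixing a pair of boundary points is virtually cyclic), but I would prefer the direct projection argument since it makes the role of hyperbolicity transparent.
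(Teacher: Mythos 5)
The paper offers no proof of this proposition---it is quoted directly from Bridson--Haefliger [Ch.~III.$\Gamma$, Cor.~3.10]---so the only comparison to make is with the standard argument behind that citation, and your sketch is essentially that argument: undistortion of $\langle g\rangle$, coarse invariance of the quasi-axis under the centralizer, and a properness/finite-ball count to bound the index. The outline is correct, but you have located the ``main obstacle'' in the wrong place. The uniform bound on $h\,g^{-n(h)}$ does \emph{not} come from the coarse additivity $n(h^2)\approx 2n(h)$; that relation says nothing about how far $h$ itself sits from the axis, and the detour through turning $n(\cdot)$ into an honest homomorphism is unnecessary. The bound comes directly from the stability (Morse) lemma: $hA_g=\{g^nh\}$ is a quasi-geodesic with the \emph{same} constants as $A_g$ (it is an isometric translate) and, being at Hausdorff distance $\le |h|_S$ from $A_g$, has the same endpoints $g^{\pm}$ at infinity; two quasi-geodesics with fixed constants and common endpoints in a proper $\delta$-hyperbolic space lie within a Hausdorff distance $D(\delta,\lambda,\epsilon)$ that is independent of $h$. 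Since $h=h\cdot g^{0}\in hA_g$, this gives $d(h,\langle g\rangle)\le D$ at once (alternatively, run a thin-quadrilateral argument on $g^{-N},g^{N},g^{N}h,g^{-N}h$ with $N\gg|h|_S$), and the finite-ball argument finishes the proof without any quasimorphism considerations. Do note also that your step (i), positivity of the stable translation length, is the genuinely substantial input and is not a consequence of ``properness of the action'' alone: the standard route uses that a hyperbolic group has only finitely many conjugacy classes of elements of bounded translation length, so that $\tau(g)=0$ would force $tg^mt^{-1}=g^n$ for some $m\ne n$, whence $t^kg^{m^k}t^{-k}=g^{n^k}$ contradicts infinite order.
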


\begin{proposition}\cite[Chapter II.7, Theorem 7.5]{bri99}
Let $G$ be a finitely generated group acting properly and cocompactly by isometries on a CAT(0) space $Y$. Then $G$ satisfies the ACC for virtually abelian subgroups.
\end{proposition}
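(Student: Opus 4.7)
The plan is to combine the Flat Torus Theorem with properness of the $G$-action on $Y$. Since $G$ acts properly and cocompactly on a CAT(0) space, every element of $G$ is a semisimple isometry, and any free abelian subgroup $A \le G$ of rank $n$ falls under the Flat Torus Theorem, producing an $n$-flat $F \subset Y$ on which $A$ acts cocompactly by translations. The rank $n$ is therefore bounded by a constant $n_{\max}$ depending only on $Y$, via the topological dimension of the compact quotient $Y/G$ or, in the torsion free case, the cohomological dimension of $G$.

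Given an ascending chain $H_1 \le H_2 \le \cdots$ of virtually abelian subgroups, I would assign to each $H_i$ a canonical normal free abelian subgroup $A_i \trianglelefteq H_i$ of finite index. After discarding finitely many terms the ranks stabilize at some $n \le n_{\max}$, and with a careful canonical choice (for instance, isolating the maximal torsion free abelian part of a characteristic finite index subgroup) one arranges honest inclusions $A_i \le A_{i+1}$ of free abelian groups of equal rank $n$. Since finite index subgroups of a free abelian group share the same minset (each element of the larger has a power in the smaller, and a semisimple isometry has the same minset as its nonzero powers), the Flat Torus Theorem yields a common minset $M = F \times Y'$, with $F \cong \R^n$ on which each $A_i$ acts as a lattice of translations.

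The key step is to conclude that $A_\infty := \bigcup_i A_i$ is finitely generated. The translation action gives an embedding $A_\infty \hookrightarrow \R^n$, and properness of the $G$-action on $Y$ implies that the $A_\infty$-orbit of any point of $F$ is discrete, so $A_\infty$ is discrete in $\R^n$; containing the full-rank lattice $A_1$, it is itself a lattice and therefore finitely generated. Consequently $A_\infty = A_{i_0}$ for some $i_0$, and the chain $(A_i)$ stabilizes. To finish, with $A_i = A$ constant for $i \ge i_0$ each $H_i$ lies in $N_G(A)$ with finite quotient $H_i/A$, and cocompactness of $G$ acting on $Y$ together with the splitting of $M$ bounds $|H_i/A|$ uniformly (elements of $H_i/A$ act on $Y'$ through point stabilizers, which are finite by properness and of bounded order by cocompactness), so the ascending chain of finite groups $H_i/A$ must also stabilize. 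The main obstacle is this last step, controlling the finite extensions via CAT(0) geometry, together with the canonical choice needed to secure genuine inclusions $A_i \le A_{i+1}$ rather than mere commensurability.
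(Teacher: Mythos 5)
First, note that the paper offers no proof of this proposition: it is quoted with a citation to Bridson--Haefliger II.7.5, so the only meaningful comparison is with the textbook argument, which your sketch broadly parallels (Flat Torus Theorem, bounded rank, discreteness of the translation lattice, control of the finite parts). Within that outline, however, the two steps you yourself flag as obstacles are genuine gaps, and the fixes you gesture at do not work as stated. There is in general \emph{no} canonical finite-index free abelian $A_i\trianglelefteq H_i$ that chains along the (non-normal) inclusions $H_i\le H_{i+1}$: characteristic subgroups of $H_i$ need not sit inside characteristic subgroups of $H_{i+1}$, and already $\Z\times\Z/2$ has two distinct maximal finite-index free abelian subgroups, namely $\Z\times 0$ and $\langle(1,\bar 1)\rangle$, so ``the maximal torsion-free abelian part'' is not well defined. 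A related sub-claim is false as stated: a semisimple isometry need not have the same min set as its nonzero powers (an isometry of $T\times\R$, $T$ a tripod, which swaps two legs of $T$ while translating the $\R$-factor has strictly smaller min set than its square, which is a pure translation); what is true, and what you actually need, is that a finite-index subgroup of $A$ still acts cocompactly on the \emph{same flat}.

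The standard repair avoids chaining the $A_i$ altogether. Once the ranks stabilize at $r$, the group $H_n\cap A_m$ is a rank-$r$ subgroup of the rank-$r$ lattice $A_m$, hence of finite index in $A_m$ and so in $H_m$; thus each $H_n$ has finite index in every later $H_m$, and it suffices to fix a single $A:=A_{n_0}$ and bound $[H_m:A]$ uniformly in $m$. Your proposed bound via ``point stabilizers on $Y'$'' is not correct: $H_m/A$ does not act on $Y'$ through point stabilizers of the $G$-action. One instead analyzes the action of $H_m$ on the Euclidean factor $E^r$ of $\operatorname{Min}(A)$: the kernel is a finite subgroup of $G$, of uniformly bounded order by properness and cocompactness (every finite subgroup fixes a point and is conjugate into the stabilizer of a point of a compact fundamental domain); the image is a crystallographic group whose translation lattice contains the image of $A$ with index controlled by the uniform positive lower bound on translation lengths together with Minkowski's covolume inequality, and whose point group is a finite subgroup of $\operatorname{GL}_r(\Z)$, hence of bounded order. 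Only with these uniform bounds does the chain of finite quotients stabilize. Finally, the rank bound itself (your appeal to the topological dimension of $Y/G$, or to cohomological dimension) is not automatic for a general CAT(0) space and needs the argument supplied in Bridson--Haefliger. In short: right strategy, but the parts you identify as ``the main obstacle'' are exactly where the proof still has to be carried out, and the tentative resolutions you propose for them do not survive scrutiny as written.
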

We also have an analogue for hyperbolic groups.
\begin{lemma}\label{ACC}
A hyperbolic group satisfies the ACC for virtually abelian subgroups.
\begin{proof}
The proof of the abovementioned Theorem 7.5 in \cite{bri99} gives this result as well (see the remarks after the proof therein).
\end{proof}
\end{lemma}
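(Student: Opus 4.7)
The plan is to reduce the ACC for virtually abelian subgroups to the ACC inside a single virtually cyclic subgroup, which is automatic since virtually infinite cyclic groups are Noetherian.

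First, I would observe that every virtually abelian subgroup $H\le G$ is already virtually cyclic. Writing $H$ as a finite extension of an abelian subgroup $A$: if $A$ contains an element $g$ of infinite order then $A\le C(g)$, which is virtually $\langle g\rangle$ by Proposition \ref{zed}, whence $H$ is virtually cyclic. Otherwise $A$ (and hence $H$) is a torsion group; by the Tits alternative for hyperbolic groups --- every subgroup is either virtually cyclic or contains a non-abelian free subgroup, see \cite[Chapter III.$\Gamma$]{bri99} --- torsion subgroups of $G$ are finite, so $H$ is finite in this case.

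Next, let $H_1\le H_2\le\cdots$ be an ascending chain of virtually cyclic subgroups. If all $H_n$ are finite, the union $\bigcup_n H_n$ is again a torsion subgroup of $G$, hence finite, which forces the chain to stabilize. Otherwise fix $n_0$ so that $H_{n_0}$ contains an element $g$ of infinite order, and consider the elementary closure
\[ E(g) := \bigl\{h\in G : h g^m h^{-1}\in \{g^m, g^{-m}\}\ \text{for some}\ m\ge 1\bigr\}. \]
Any infinite virtually cyclic subgroup $K\le G$ containing $g$ has, being two-ended, a normal infinite cyclic subgroup of finite index which contains a power of $g$; since conjugation by elements of $K$ preserves this subgroup up to inversion, $K\le E(g)$. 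In particular $H_n\le E(g)$ for every $n\ge n_0$.

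The main obstacle is to show that $E(g)$ is itself virtually cyclic. The sign map $\sigma:E(g)\to\{\pm 1\}$, defined by $hg^m h^{-1}=g^{\sigma(h)m}$ for sufficiently divisible $m$, is a well-defined homomorphism with kernel $E^+(g)=\bigcup_m C(g^m)$; one invokes hyperbolicity --- through the north-south dynamics of $g$ on the Gromov boundary $\partial G$ and the classical fact that the pointwise stabilizer of a pair of boundary points in a hyperbolic group is virtually cyclic (a consequence of Proposition \ref{zed} and the convergence action on $\partial G$, see \cite[Chapter III.$\Gamma$]{bri99}) --- to conclude that $E^+(g)$, and therefore $E(g)$, is virtually cyclic. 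Once this is in hand, the tail $(H_n)_{n\ge n_0}$ lies in a virtually infinite cyclic, hence Noetherian, group, so the chain stabilizes.
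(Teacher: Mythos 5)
Your argument is correct, but it takes a genuinely different route from the one the paper relies on. The paper's proof is a pure citation: it defers to the proof of \cite[Chapter II.7, Theorem 7.5]{bri99} and the remark following it, where the mechanism is that (stable) translation numbers of infinite-order elements of a hyperbolic group are bounded away from zero, and this discreteness is what forces an ascending chain of virtually cyclic subgroups to terminate. You instead (i) reduce virtually abelian subgroups to virtually cyclic ones via Proposition \ref{zed} together with finiteness of torsion subgroups, and (ii) trap the tail of any non-torsion chain inside the elementary closure $E(g)$ of a fixed infinite-order element $g$, which is virtually cyclic and hence Noetherian, so the chain stabilizes there. Both steps are sound: your observation that every infinite virtually cyclic $K\ni g$ satisfies $K\le E(g)$ (via a normal infinite cyclic finite-index subgroup containing a power of $g$) is correct, and you rightly identify the virtual cyclicity of $E(g)$ as the crux --- note in particular that one cannot deduce that $E^{+}(g)=\bigcup_m C(g^m)$ is virtually cyclic from Proposition \ref{zed} alone, since that would amount to the ACC for the chain of centralizers $C(g^{m!})$, which is essentially what is being proved; the appeal to the convergence action on $\partial G$ (or to quasiconvexity of $\langle g\rangle$) is genuinely needed, as you indicate. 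The trade-off between the two approaches: yours makes the algebraic structure transparent and leans directly on the paper's stated Proposition \ref{zed}, but imports two further standard facts (finiteness of torsion subgroups via the Tits alternative, and virtual cyclicity of boundary-pair stabilizers) whose proofs are of comparable depth to the translation-number estimate hidden in the paper's citation.
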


\subsection{The diagonal cover} Recall the definition of the compact-open topology from \cite[Definition 43.1]{wil04}: given topological spaces $X$ and $Y$, the \emph{compact-open} topology on the set $C(X;Y)$ of continuous maps $X\to Y$ is the topology induced by the subbasis consisting of sets of the form \[ \{ f\in C(X;Y): f(K)\subset U \} \] for compact $K\subset X$ and open $U\subset Y$. When $X$ is compact and $Y$ a metric space the compact-open topology coincides with the topology of uniform convergence, \cite[Theorem 43.7]{wil04}.

\begin{definition}\label{diacover}
Let $Y$ be a connected, locally path connected Hausdorff  space admitting a universal cover. Consider the space $C([0,1];Y)$ with the compact-open topology, and the equivalence relation $\sim$, where $\alpha\sim\beta$ if $\alpha$ and $\beta$ are endpoint preserving homotopic as paths. We call the set \[ \diaco Y=C([0,1];Y)/\sim \] with the quotient topology the \emph{diagonal cover} of $Y$.
\end{definition}

The name is indicative of the fact that the map $p:\diaco Y\to Y\times Y$ given by
\begin{align}\label{diaco}
p([\gamma])=(\gamma(0),\gamma(1))
\end{align}
is a covering map and $p_\sharp\pi(\diaco Y)=\diag(\pi(Y))\le \pi(Y)\times \pi(Y)$ (Lemma \ref{diagonal}).

In case $Y$ is a length space there is a unique metric $\hat d$ on $\diaco Y$ making it a length space and $p$ a local isometry \cite[Proposition 3.5.1]{pap05} and we always consider $\diaco Y$ with this metric.

The construction of the diagonal cover is similar to a construction of a universal cover for $Y$ (the difference being that there one chooses an arbitrary point and looks at all paths starting from that point), and in a similar way we may see that the projection map (\ref{diaco}) is a well-defined covering map. See \cite{bro12} for a discussion on different constructions for, and generalizations of, a universal covering spaces.



It is easily seen that $\widetilde Y_x=\{ [\gamma]\in \diaco Y|\ \gamma(0)=x \}$ is a closed subspace of $\diaco Y$ and the induced topology on $\widetilde Y_x$ makes it the universal covering of $Y$.

\begin{lemma}\label{diagonal}
Let $y_0\in Y$ be a point in the connected, locally path connected Hausdorff space $Y$, and consider the constant path $\hat y_0\equiv y_0$. We have \[ p_\sharp \pi(\diaco Y,\hat y_0)=\diag(\pi(Y,y_0)) \] where $\diag(\pi(Y,y_0))$ is the diagonal subgroup of $\pi(Y,y_0)\times \pi(Y,y_0)$.
\begin{proof}
Suppose $\hat{\gamma}:[0,1]\to \diaco Y$ is a path in $\diaco Y$ with start- and endpoint $\hat y_0$. Then $(t,s)\mapsto \hat{\gamma}(s)(t)$ is a homotopy relative $\hat y_0$ from $\hat{\gamma}(0)$ to $\hat\gamma(1)$. Since $p\circ\hat{\gamma}=(\hat\gamma(0), \hat\gamma(1))$ we have $p_{\sharp}(\hat\gamma)\in \diag (\pi(Y,y_0))$. This shows that $$p_\sharp\pi(\diaco Y,\hat y_0)\le \diag(\pi(Y,y_0)).$$

For the other inclusion take any path $(\gamma,\gamma)\in \diag(\pi(Y,y_0))$ and define the path  $\hat\gamma$ by setting $$\hat \gamma(t)=\textrm{ constant path }\equiv \gamma(t).$$
\end{proof}
\end{lemma}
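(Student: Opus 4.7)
The plan is to use covering space theory: since $p:\diaco Y\to Y\times Y$ is a covering map (established in the construction of $\diaco Y$), the subgroup $p_\sharp\pi(\diaco Y,\hat y_0)$ consists precisely of the homotopy classes of loops in $Y\times Y$ based at $(y_0,y_0)$ whose lifts to $\diaco Y$ starting at $\hat y_0$ are themselves loops. Both inclusions then reduce to understanding when such lifts close up.

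For the inclusion $\diag(\pi(Y,y_0))\subseteq p_\sharp\pi(\diaco Y,\hat y_0)$ I would write down explicit preimages. Given a loop $\delta:[0,1]\to Y$ at $y_0$, define $\hat\gamma(s):=[c_{\delta(s)}]$, where $c_{\delta(s)}$ is the constant path at $\delta(s)$. Continuity follows from continuity of $\delta$ together with the fact that $y\mapsto c_y$ is continuous from $Y$ into $C([0,1],Y)$ (constant paths vary continuously in the compact-open topology), followed by the continuous quotient map to $\diaco Y$. Since $\delta(0)=\delta(1)=y_0$, we have $\hat\gamma(0)=\hat\gamma(1)=\hat y_0$, and $p\circ\hat\gamma(s)=(\delta(s),\delta(s))$, so $p_\sharp[\hat\gamma]=[(\delta,\delta)]$.

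For the reverse inclusion, I would build the unique lift of an arbitrary loop $\delta=(\delta_0,\delta_1)$ at $(y_0,y_0)$ and detect when it closes up. Define
\[
\alpha_s(t)=\begin{cases}\delta_0((1-2t)s),& t\in[0,1/2],\\ \delta_1((2t-1)s),& t\in[1/2,1],\end{cases}
\]
so $\alpha_s$ traverses $\delta_0$ backward from $\delta_0(s)$ to $y_0$ and then $\delta_1$ forward from $y_0$ to $\delta_1(s)$. Then $s\mapsto\alpha_s$ is continuous into $C([0,1],Y)$, and hence $\tilde\delta(s):=[\alpha_s]$ is continuous into $\diaco Y$, with $\tilde\delta(0)=[c_{y_0}]=\hat y_0$ and $p\circ\tilde\delta=\delta$. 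By uniqueness of path lifting, $\tilde\delta$ is \emph{the} lift of $\delta$ starting at $\hat y_0$. Its endpoint $\tilde\delta(1)=[\bar\delta_0\cdot\delta_1]$ equals $\hat y_0$ in $\diaco Y$ iff $\bar\delta_0\cdot\delta_1$ is null-homotopic rel endpoints, iff $[\delta_0]=[\delta_1]$ in $\pi(Y,y_0)$, iff $[\delta]\in\diag(\pi(Y,y_0))$. Combined with the first direction, this gives both containments.

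The only technical point I would need to verify carefully is continuity of $s\mapsto\alpha_s$ with respect to the compact-open topology on $C([0,1],Y)$; this follows from uniform continuity of $\delta_0,\delta_1$ on $[0,1]$ and the definition of subbasis sets for the compact-open topology, and is then preserved by the quotient map to $\diaco Y$. Beyond this I do not anticipate serious obstacles; the argument is essentially the standard recipe from covering space theory adapted to the particular covering $p:\diaco Y\to Y\times Y$.
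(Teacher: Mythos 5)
Your proof is correct, and one half of it coincides exactly with the paper's: for $\diag(\pi(Y,y_0))\le p_\sharp\pi(\diaco Y,\hat y_0)$ the paper also lifts a diagonal loop $(\delta,\delta)$ via $s\mapsto[c_{\delta(s)}]$. For the forward inclusion you take a genuinely different route. The paper works upstairs: given a loop $\hat\gamma$ in $\diaco Y$ at $\hat y_0$, it declares $(t,s)\mapsto\hat\gamma(s)(t)$ to be a basepoint-preserving homotopy between the two coordinate projections $p_0\circ\hat\gamma$ and $p_1\circ\hat\gamma$, which forces $p_\sharp[\hat\gamma]$ into the diagonal. You instead work downstairs: you write out the explicit lift $s\mapsto[\bar\delta_0|_{[0,s]}\cdot\delta_1|_{[0,s]}]$ of an arbitrary loop $\delta=(\delta_0,\delta_1)$ and apply the standard criterion that $[\delta]\in p_\sharp\pi(\diaco Y,\hat y_0)$ iff the lift closes up, which happens iff $[\delta_0]=[\delta_1]$. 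Your version has the advantage of sidestepping a point the paper glosses over, namely that $\hat\gamma(s)$ is only a homotopy class, so writing $\hat\gamma(s)(t)$ presumes a continuous choice of representatives; in exchange you must verify continuity of $s\mapsto\alpha_s$ into $C([0,1];Y)$ and then into the quotient. Note that $Y$ is merely a Hausdorff space here, so "uniform continuity" is not available; but the continuity you need follows from the exponential law $C([0,1],C([0,1];Y))\cong C([0,1]^2;Y)$ for the compact-open topology, since the adjoint map $(s,t)\mapsto\alpha_s(t)$ is continuous by the pasting lemma. With that substitution your argument is complete.
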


\bigskip\noindent Suppose now that $Y$ is, in addition, a separable length space. If $Y$ is also proper then every homotopy class of paths in $Y$ contains a local geodesic. This is proven in \cite[Proposition 2.4.11]{pap05} under the assumption that $Y$ is locally simply connected. However, the proof works under the assumption of semi-local simply connectedness as well.

\section{Path representability}


\begin{definition}\label{prp}
A connected, locally path connected Hausdorff space is called \emph{path representable} if the component maps of $p=(p_0,p_1):\diaco Y \to Y\times Y$ are homotopic as maps $p_0,p_1:\diaco Y \to Y$.
\end{definition}
The idea behind this term is that, somehow, the homotopy classes of paths in $Y$ are ``representable'' by paths (not necessarily in the given homotopy class) which vary continuously with their endpoints.

The class of path representable spaces generalizes the nonpositively curved ones while retaining the universal property below, essential for our approach and the validity of Theorem \ref{homchar}.

\begin{proposition}\label{universal}
The property of a space $Y$ being path representable is equivalent to the following universal property: For every topological space $X$, two continuous maps $f,g:X\to Y$ are homotopic (in the usual sense) if and only if the map $(f,g):X\to Y\times Y$ has a lift $h:X\to \diaco Y$.
\begin{proof}
To see that path representability implies the universal property, note that if $H:X\times [0,1]\to Y$ is a homotopy between two maps $f,g:X\to Y$ we obtain a lift $\hat H:X\to \diaco Y$ of $(f,g)$ by setting $$ \hat H(x)=[H(x,\cdot)].$$ If $h:p_0\simeq p_1$ is a homotopy then the existence of a lift $\hat H:X\to \diaco Y$ of $(f,g):X\to Y\times Y$ yields a homotopy $H:f\simeq g$ by $H(x,t)=h(\hat H(x),t)$. 

\bigskip\noindent Conversely, if $Y$ satisfies the universal property, the identity $id:\diaco Y\to \diaco Y$ is a lift of the map $(p_0,p_1):\diaco Y\to Y\times Y$.
\end{proof}
\end{proposition}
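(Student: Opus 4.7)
The plan is to prove the two implications of the equivalence separately, largely following the sketch already indicated in the statement but filling in the continuity and well-definedness details.

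For the forward direction (path representability implies the universal property), the proof splits into two sub-implications. The easier one is that a homotopy $H\colon X\times[0,1]\to Y$ between $f$ and $g$ yields a lift $\hat H\colon X\to\diaco Y$ defined by $\hat H(x)=[H(x,\cdot)]$; here I need to verify that the assignment $x\mapsto H(x,\cdot)$ is continuous from $X$ into $C([0,1],Y)$ with the compact-open topology (using the exponential law for compactly generated or, in our case, compact-domain continuous maps), and that post-composition with the quotient map to $\diaco Y$ preserves continuity. The equality $p\circ\hat H=(f,g)$ is then tautological from the formula for $p$. Note that this direction does \emph{not} use the path representability hypothesis. The other sub-implication is where path representability enters: given a lift $\hat H\colon X\to\diaco Y$ of $(f,g)$ and a homotopy $h\colon p_0\simeq p_1$ provided by Definition \ref{prp}, define $H(x,t):=h(\hat H(x),t)$. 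Continuity of $H$ follows from the continuities of $h$ and $\hat H$, and evaluating at $t=0,1$ gives $H(\cdot,0)=p_0\circ \hat H=f$, $H(\cdot,1)=p_1\circ \hat H=g$.

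For the reverse direction (universal property implies path representability), the idea is to apply the universal property with the universal choice $X=\diaco Y$ and $(f,g)=(p_0,p_1)$. The identity map $\id\colon\diaco Y\to\diaco Y$ is manifestly a lift of $(p_0,p_1)\colon \diaco Y\to Y\times Y$, so the universal property immediately delivers a homotopy $p_0\simeq p_1$, which is exactly what Definition \ref{prp} requires.

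The main obstacle I anticipate is not conceptual but is the careful verification that $x\mapsto[H(x,\cdot)]$ lands continuously in the quotient $\diaco Y$. This reduces to the compact-open/uniform-convergence machinery recalled immediately before Definition \ref{diacover}: since $[0,1]$ is compact, the map $x\mapsto H(x,\cdot)\in C([0,1];Y)$ is continuous for the compact-open topology whenever $H$ itself is continuous, and composing with the continuous quotient map $C([0,1];Y)\to\diaco Y$ preserves continuity. Everything else is essentially formal manipulation of lifts, homotopies, and the evaluation map $p=(p_0,p_1)$.
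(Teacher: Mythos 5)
Your proof is correct and follows essentially the same route as the paper's: the lift $\hat H(x)=[H(x,\cdot)]$ in one direction, the homotopy $H(x,t)=h(\hat H(x),t)$ in the other, and the identity map as the universal test case for the converse. The extra care you take with the compact-open topology to justify continuity of $x\mapsto[H(x,\cdot)]$ is a detail the paper leaves implicit, but it does not change the argument.
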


In other words the class of path representable spaces is exactly those spaces where homotopy of two maps is characterized by the existence of a lift of the product map to the diagonal cover. Let us mention some further properties of path representable spaces.

\begin{proposition}\label{kg1}
A path representable space $Y$ is $K(G,1)$ with $G=\pi(Y)$.
\begin{proof}
Let $h:p_1\simeq p_0$ be a homotopy. Fix an arbitrary point $q\in Y$ and consider the subset $\tilde Y_q=\{[\gamma]: \gamma(0)=q\}\subset \hat Y$. Now set $r:\tilde Y_q\times [0,1]\to Y$, \[ r([\gamma],t)=h([\gamma],t). \] Then $r(\cdot,0)=p_1(\cdot)$ and $r(\cdot, 1)=p_0(\cdot)\equiv q$. Since $\tilde Y_q$ is the universal cover of $Y$ it follows that $\tilde Y_q\times [0,1]$ has trivial fundamental group and $r$ has a lift to $\tilde r:\tilde Y_q\times [0,1]\to \tilde Y_q$ satisfying $\tilde r([q],0)=[q]$. This shows that the universal cover is contractible.
\end{proof}
\end{proposition}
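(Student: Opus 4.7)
The plan is to show that the universal cover of $Y$ is contractible, since the resulting asphericity together with $\pi_1(Y)=G$ is exactly the $K(G,1)$ property for a locally path connected Hausdorff space admitting a universal cover. Path representability supplies precisely the homotopy needed to build a contraction.

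Fix $q\in Y$ and realize the universal cover of $Y$ based at $q$ as
\[ \widetilde Y_q=\{[\gamma]\in\diaco Y:\gamma(0)=q\}\subset\diaco Y, \]
with covering projection $p_1|_{\widetilde Y_q}:[\gamma]\mapsto\gamma(1)$; this is the standard construction of the universal cover, and here it comes with the added bonus that the \emph{other} endpoint map $p_0|_{\widetilde Y_q}$ is the constant map with value $q$. Let $h:\diaco Y\times[0,1]\to Y$ be a homotopy witnessing $p_0\simeq p_1$, supplied by path representability, and restrict it to $r:=h|_{\widetilde Y_q\times[0,1]}:\widetilde Y_q\times[0,1]\to Y$. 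By construction $r$ interpolates (in one direction or the other) between the covering projection $p_1|_{\widetilde Y_q}$ and the constant map $q$.

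Now apply the homotopy lifting property of the covering map $p_1|_{\widetilde Y_q}$, starting from the identity $\widetilde Y_q\to\widetilde Y_q$ as the tautological lift of the covering-projection end of $r$. This produces $\tilde r:\widetilde Y_q\times[0,1]\to\widetilde Y_q$ with $p_1\circ\tilde r=r$ whose initial slice is the identity; the opposite endpoint $\tilde r(\cdot,s)$ (with $s$ the parameter value where $r(\cdot,s)\equiv q$) is a continuous lift of the constant map $q$ and thus factors through the discrete fiber $p_1^{-1}(q)$. Since $\widetilde Y_q$ is connected it must itself be a constant map, and hence $\tilde r$ is a contraction of $\widetilde Y_q$ onto a single point. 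Therefore the universal cover is contractible, which forces $\pi_n(Y)=0$ for $n\ge 2$, completing the $K(G,1)$ assertion.

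The only conceptual point requiring care is matching the two ends of the path representability homotopy to the HLP set-up — the essential trick being that $p_0$ becomes the easy-to-lift constant side once restricted to the fiber $\widetilde Y_q$, while $p_1$ becomes the covering projection whose obvious lift is the identity. The lifting itself is standard, since covering maps satisfy the HLP without any hypothesis on the source.
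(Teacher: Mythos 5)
Your proof is correct and follows essentially the same route as the paper: restrict the path-representability homotopy $h:p_0\simeq p_1$ to the fiber $\widetilde Y_q\subset\diaco Y$, observe that one end is the covering projection and the other the constant map $q$, lift to $\widetilde Y_q$, and conclude that the lifted homotopy contracts the universal cover. The only (immaterial) difference is that you invoke the homotopy lifting property starting from the identity, whereas the paper lifts $r$ via the lifting criterion using simple connectivity of $\widetilde Y_q\times[0,1]$; your version is in fact slightly more explicit about why the terminal slice is constant.
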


In fact for CW-complexes the converse is true as well.

\begin{proposition}\label{cw}
Suppose $Y$ is a $K(G,1)$ path connected CW-complex with countably many cells. Then $Y$ is path representable.
\begin{proof}
The countability of cells implies that $Y\times Y$ (with the product topology) is a CW complex. Since $\diaco Y$ is a covering of $Y\times Y$ it is also a CW-complex (see \cite[Appendix]{hat02}). By \cite[Proposition 1B.9, p. 90]{hat02} the components $p_0$ and $p_1$ of $p=(p_0,p_1):\diaco Y\to Y\times Y$ are homotopic if $(p_0)_\sharp=(p_1)_\sharp$ as homomorphisms $\pi(\diaco Y, \hat y_0)\to \pi(Y\times Y, (y_0,y_0))$. Let $[\alpha]\in \pi(\diaco Y, \hat y_0)$. Then $H:[0,1]^2\to \diaco Y$ defined by $$ H(t,s)=\alpha(t)(s)$$ is a homotopy from $p_0\circ\alpha$ to $p_1\circ\alpha$. Thus $$(p_0)_\sharp([\alpha])=(p_1)_\sharp([\alpha]).$$

\end{proof}
\end{proposition}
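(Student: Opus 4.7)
The plan is to apply the standard classification of maps into Eilenberg-MacLane spaces: for a CW-complex $A$ and a $K(G,1)$-space $Z$, two basepoint-preserving maps $A \to Z$ are homotopic rel basepoint if and only if they induce the same homomorphism on $\pi_1$. Taking $A = \diaco Y$, $Z = Y$, and the basepoint $\hat y_0$ of $\diaco Y$ to be the constant path at some chosen $y_0 \in Y$ (so that $p_0(\hat y_0) = p_1(\hat y_0) = y_0$), this reduces path representability to verifying the identity $(p_0)_\sharp = (p_1)_\sharp$ of homomorphisms $\pi(\diaco Y, \hat y_0) \to \pi(Y, y_0)$.

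The preliminary step is to ensure $\diaco Y$ carries a CW-structure so that the classification applies. The countability of cells is exactly what guarantees that the product topology on $Y \times Y$ coincides with its CW-topology; the known pathology for products of CW-complexes is confined to the uncountable case. Since $\diaco Y$ is a covering space of $Y \times Y$, it inherits a CW-structure by pulling cells back through local trivializations, as in Hatcher's appendix.

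For the core verification, let $\alpha:[0,1] \to \diaco Y$ be a loop based at $\hat y_0$. By the definition of the quotient topology on $\diaco Y$ and the exponential law for the compact-open topology (which applies since $[0,1]$ is locally compact Hausdorff), the adjoint $H(t,s) := \alpha(t)(s)$ is a continuous map $[0,1]^2 \to Y$. By construction $H(t,0) = p_0(\alpha(t))$ and $H(t,1) = p_1(\alpha(t))$; moreover, $\alpha(0) = \alpha(1) = \hat y_0$ is the constant path at $y_0$, forcing $H(0,s) = H(1,s) = y_0$. Hence $H$ is a basepoint-preserving homotopy between $p_0 \circ \alpha$ and $p_1 \circ \alpha$, which gives $(p_0)_\sharp[\alpha] = (p_1)_\sharp[\alpha]$.

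The main subtlety to keep in mind is the continuity of the adjoint $H$, which hinges on the interaction between the quotient topology on $\diaco Y$, the compact-open topology on $C([0,1];Y)$, and the exponential correspondence. Once these foundational topological facts are in place the homotopy is explicit and the rest of the argument is formal.
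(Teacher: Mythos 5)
Your proposal is correct and follows essentially the same route as the paper: endow $Y\times Y$ (using countability of cells) and hence its covering $\diaco Y$ with a CW structure, invoke the classification of maps into a $K(G,1)$ (Hatcher's Proposition 1B.9) to reduce to equality of the induced homomorphisms, and verify that equality with the explicit homotopy $H(t,s)=\alpha(t)(s)$. Your added care about the continuity of the adjoint $H$ via the exponential law and about basepoint preservation is a welcome refinement (and you correctly identify the target of the homomorphisms as $\pi(Y,y_0)$, where the paper's statement has a small slip), but the argument is the same.
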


\bigskip\noindent Path representability is a purely topological property which is invariant under homotopy equivalence. When working with metric spaces one may ask when are they path representable. Nonpositive curvature guarantees path representability.

In some ways path representability can be thought of as a topological substitute for nonpositive curvature (in any case, it is \emph{the} topological property of nonpositively curved spaces crucial to the results in this section). 

Another sufficient criterion for a metric space to be path representable is given in the next lemma. We say that a metric space $Y$ is uniquely locally geodesic if each pair of points may be connected by a unique geodesic and, moreover, every local geodesic is a (global) geodesic.

\begin{lemma}\label{ulg}
Suppose the universal cover of $Y$ is a proper uniquely locally geodesic space. Then $Y$ is path representable.
\begin{proof}
We prove that each homotopy class $[\gamma]$ of a path $\gamma$ in $Y$ contains a unique local geodesic. By the remark after the definition of $\hat Y$ there is at least one local geodesic in $[\gamma]$. Let $\gamma_1,\gamma_2$ be two local geodesics in $[\gamma]$. Their lifts $\tilde \gamma_1$, $\tilde \gamma_2$ with common starting point are local geodesics, which also have a common endpoint. Therefore the lifts are in fact the same and consequently $\gamma_1=\gamma_2$.

Now we may define a map $h:\hat Y\times [0,1]\to Y$ by $h([\gamma],t)=\gamma'(t)$, where $\gamma'$ is the unique local geodesic in $[\gamma]$. Since in proper uniquely locally geodesic spaces, the local geodesics vary continuously with their endpoints (this follows easily from the Arzela-Ascoli Theorem, see \cite[Definition 2.4.14]{pap05} and the discussion after it), we see that $h$ is a continuous map. Moreover, since $h([\gamma],0)=\gamma(0)=p_0([\gamma])$ and similarly $h(\cdot, 1)=p_1(\cdot)$, we have constructed a homotopy $h: p_0\simeq p_1$.
\end{proof}
\end{lemma}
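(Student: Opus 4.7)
The plan is to construct a homotopy $h : \diaco Y \times [0,1] \to Y$ between $p_0$ and $p_1$ by letting $h([\gamma], t)$ be the evaluation at time $t$ of a canonical local geodesic representative of the homotopy class $[\gamma]$. This reduces the lemma to three tasks: first, that each class $[\gamma] \in \diaco Y$ contains a local geodesic; second, that such a representative is unique in its class; and third, that the resulting map $h$ is continuous on $\diaco Y \times [0,1]$.

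Existence of a local geodesic in each endpoint-fixed homotopy class is already available from the remarks preceding Section 3, which invoke Papadopoulos under semi-local simple connectivity. For uniqueness, suppose $\gamma_1, \gamma_2 \in [\gamma]$ are both local geodesics. Lift each to the universal cover $\tilde Y$ starting from a common basepoint; since $[\gamma_1] = [\gamma_2]$ and homotopies lift, the lifts $\tilde\gamma_1, \tilde\gamma_2$ also share a common terminal point, and they remain local geodesics. The unique local geodesicity of $\tilde Y$ then forces $\tilde\gamma_1 = \tilde\gamma_2$, whence $\gamma_1 = \gamma_2$ downstairs. Writing $\gamma'$ for this canonical representative and setting $h([\gamma], t) = \gamma'(t)$, the boundary values $h([\gamma], 0) = \gamma(0) = p_0([\gamma])$ and $h([\gamma], 1) = \gamma(1) = p_1([\gamma])$ are immediate.

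The main obstacle is the continuity of $h$. Since $p : \diaco Y \to Y \times Y$ is a local homeomorphism (being a covering), continuity is a purely local matter in the $[\gamma]$-variable and can be transported through $p$ to a statement about local geodesic representatives whose endpoints vary continuously in $Y \times Y$. I would further transport to the universal cover: near any $[\gamma_0]$ the canonical local geodesics have uniquely determined lifts to $\tilde Y$, and the problem becomes continuity of the unique geodesic map on $\tilde Y \times \tilde Y$. Properness of $\tilde Y$ together with Arzela-Ascoli yields uniform subconvergence of geodesics whose endpoints converge, and the uniquely locally geodesic hypothesis identifies the only possible limit as the geodesic between the limit endpoints, upgrading subconvergence to convergence. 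Projecting back to $Y$ gives uniform convergence of $\gamma_n'$ to $\gamma_0'$ whenever $[\gamma_n] \to [\gamma_0]$ in $\diaco Y$, and this uniform convergence delivers the joint continuity of $h$ in $([\gamma], t)$.
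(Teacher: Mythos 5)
Your proposal follows essentially the same route as the paper: existence of a local geodesic representative from the cited remark, uniqueness by lifting to the universal cover and invoking unique local geodesicity there, and continuity of $h([\gamma],t)=\gamma'(t)$ via properness and Arzela--Ascoli. The only difference is that you spell out the continuity step (which the paper delegates to a reference in Papadopoulos), and your elaboration — subconvergence from Arzela--Ascoli plus identification of the limit by uniqueness — is the correct standard argument.
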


\begin{corollary}\label{cat}
Let $Y$ be a compact nonpositively curved space whose universal cover is Gromov hyperbolic. Then $Y$ is path representable and $\pi(Y)$ is torsion free hyperbolic. In particular $Y$ satisfies the conditions of Theorem \ref{minex}.
\begin{proof}
The universal cover $\widetilde Y$ of $Y$ is a proper CAT(0) space, whence by Lemma \ref{ulg} $Y$ is path representable. Moreover $\pi(Y)$ is a finitely generated group acting properly and cocompactly by isometries on $\widetilde Y$. By the Svarc-Milnor Lemma \ref{svarc} $\pi(Y)$ is quasi-isometric to $\widetilde Y$ and therefore Gromov hyperbolic. Torsion freedom is given by Proposition \ref{torfree}.
\end{proof}
\end{corollary}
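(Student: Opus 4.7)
The plan is to verify each of the three conclusions (path representability, torsion-free hyperbolicity of $\pi(Y)$, and the hypotheses of Theorem \ref{minex}) by chaining results already established in the paper together with standard facts about CAT(0) spaces and geometric group theory. The key preliminary observation, used in both of the first two steps, is that the universal cover $\widetilde Y$ is a \emph{proper CAT(0) space}: compactness of $Y$ together with the locally CAT(0) condition makes $\widetilde Y$ a complete locally compact length space (hence proper, by Hopf--Rinow--Cohn-Vossen), and the Cartan--Hadamard theorem upgrades nonpositive curvature locally to a global CAT(0) condition on $\widetilde Y$.

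For path representability, I would appeal directly to Lemma \ref{ulg}. A proper CAT(0) space is uniquely locally geodesic: any two points are joined by a unique geodesic, and every local geodesic in a CAT(0) space extends to a (global) geodesic of the same length. Thus $\widetilde Y$ satisfies the hypothesis of Lemma \ref{ulg}, giving path representability of $Y$ at once.

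For hyperbolicity of $\pi(Y)$, the deck action on $\widetilde Y$ is by isometries, proper, and cocompact (the latter because $Y$ is compact). Proposition \ref{svarc} (Svarc--Milnor) then tells us that $\pi(Y)$ is finitely generated and that, for any basepoint $\hat y_0 \in \widetilde Y$, the orbit map $g \mapsto g.\hat y_0$ is a quasi-isometry between $\pi(Y)$ (with the word metric) and $\widetilde Y$. Gromov hyperbolicity is a quasi-isometry invariant among geodesic spaces (the fact noted after Definition \ref{hyperbolic}), so from the assumption that $\widetilde Y$ is Gromov hyperbolic we conclude that the Cayley graph of $\pi(Y)$ is Gromov hyperbolic, i.e.\ $\pi(Y)$ is hyperbolic in the sense of Definition \ref{hyperbolic}. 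Torsion-freeness is then immediate from Proposition \ref{torfree}, since $Y$ is complete (being compact) and nonpositively curved.

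To finish, I would observe that the above gives $Y$ compact, geodesic, path representable, with torsion-free hyperbolic fundamental group --- exactly the target-side hypotheses of Theorem \ref{minex}. There is no serious obstacle: every step is a direct invocation of something on the shelf, with the only imported background ingredient being the quasi-isometry invariance of Gromov hyperbolicity for geodesic spaces, which the paper has already used implicitly.
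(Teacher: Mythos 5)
Your proposal is correct and follows the same route as the paper's own proof: properness and the CAT(0) property of $\widetilde Y$ feed into Lemma \ref{ulg} for path representability, Svarc--Milnor plus quasi-isometry invariance of Gromov hyperbolicity give hyperbolicity of $\pi(Y)$, and Proposition \ref{torfree} gives torsion-freeness. The extra justifications you supply (Hopf--Rinow for properness, Cartan--Hadamard for globalizing CAT(0), and unique local geodesicity of CAT(0) spaces) are details the paper leaves implicit.
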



\noindent Let us conclude this section with a proof of Theorem \ref{homchar}.

\begin{proof}[Proof of Theorem \ref{homchar}]
Suppose $u,v\in \Nem pXY$ are $p$-quasihomotopic and let $h:u\simeq v$ be a $p$-quasihomotopy. Define a map $\hat h:X\to \diaco Y$ by \[ \hat h(x)=[h(x,\cdot)] \] as in the continuous case. Then $\hat h$ is $p$-quasicontinuous, since for any $\varepsilon >0$ we may find an open set $U\subset X$ with $\Cp_p(U)<\varepsilon$ so that $h|_{X\setminus U\times [0,1]}$, and consequently $\hat h|_{X\setminus U}=[h|_{X\setminus U\times [0,1]}]$, is continuous.

Further, $$p\circ \hat h (x)=(h(x,0),h(x,1))=(u(x),v(x))$$ outside a set $E$ of zero $p$-capacity. To prove that $\hat h\in \Nem pX{\diaco Y}$ let $\Gamma$ be path family with $\Mod_p(\Gamma)=0$ so that any $\gamma\notin\Gamma$ omits $E$, $g_u$ and $g_v$ are upper gradients for $u$ and $v$ along $\gamma$, and $\hat h\circ \gamma$ is continuous. Then $g:=(g_u^2+g_v^2)^{1/2}$ is an upper gradient of $(u,v)$ along $\gamma$.

By the compactness of $((u,v)\circ \gamma)[0,1]$ we may find $r>0$ so that $B((u,v)\circ\gamma(t),r)$ is a covering neighbourhood for each $t\in [0,1]$.

Partition the unit interval as $0=a_0<\ldots <a_m=1$ so that $$(u,v)\circ\gamma([a_{j},a_{j+1}])\subset B((u,v)\circ\gamma(a_j),r/2).$$ The values of $\hat h$ along $\gamma|_{[a_j,a_{j+1}]}$ vary in one component of $p^{-1}B((u,v)\circ\gamma(a_j),r/2)$, and therefore \[ \hat d(\hat h(\gamma(a_{j+1})),\hat h(\gamma(a_j)))= d_{Y^2}((u,v)(\gamma(a_{j+1})),(u,v)(\gamma(a_j)))\le \int_{\gamma|_{[a_j,a_{j+1}]}}g \] for each $j=0,\ldots , m-1$. Thus
\begin{align*}
\hat d(\hat h(\gamma(1)),\hat h(\gamma(0)))\le \sum_{j=0}^{m-1}\hat d(\hat h(\gamma(a_{j+1})),\hat h(\gamma(a_j)))\le \sum_{j=0}^{m-1}\int_{\gamma|_{[a_j,a_{j+1}]}}g=\int_\gamma g.
\end{align*}
This proves that $\hat h\in \Nem pX{\diaco Y}$.

\bigskip\noindent Now suppose $\hat f\in \Nem pX{\diaco Y}$ is a lift of $(u,v)\in \Nem pX{Y\times Y}$. Set $F(x,t)=h(\hat f(x),t)$ where $h:p_0\simeq p_1$ is a homotopy. Let $\varepsilon >0$ and let $E\subset X$ be an open set with $\Cp_p(E)<\varepsilon$ such that $\hat f|_{X\setminus E}$ is continuous. Then $F|_{X\setminus E\times [0,1]}$ is continuous, and $F(x,0)=h(\hat f(x),0)=u(x)$, $F(x,1)=h(\hat f(x),1)=v(x)$ for $x\in X\setminus E$. Therefore $F$ is a $p$-quasihomotopy $u\simeq v$.
\end{proof}

\section{Lifts}

\noindent In this section we focus on lifts of Newtonian maps. \emph{Throughout this section we assume that $X$ is a compact doubling $p$-Poincar\'e and that $Y$ is a complete separable length space equipped with a covering $\phi:\hat Y\to Y$ that is a local isometry.} 


\bigskip\noindent A covering neighbourhood $U\subset Y$ is a path connected neighbourhood so that the map $\phi|_V:V\to U$ is an isometry onto $U$, for any component $V$ of $\phi^{-1}U$, and the induced homomorphism $\iota_\sharp:\pi_1(U)\to \pi_1(Y)$ of the inclusion $\iota: U\to Y$ is trivial. Thus, if $B(y,r)\subset Y$ is a covering neighbourhood, we have $$ \phi^{-1}B(y,r)=\bigcup_{\hat y\in \phi^{-1}(y)}\widehat B(\hat y, r),$$ where $$\widehat B(a,r)=\{b\in \widehat Y: \hat d(a,b)<r \}.$$ We shall make extensive use of these facts without further mention. 

\begin{definition}\label{fsl}
Let $g\in L^p(\mu)$ be a nonnegative Borel function, $\Gamma_0$ a path family of zero $p$-modulus and $x_0\in X$ a point with $\M g^p(x_0)<\infty$ and $x_0\notin \spt_p\Gamma_0$. The path family given by \[ \F=\F_{x_0}(g;\Gamma_0):=\{\alpha\beta^{-1}: \alpha(1)=\beta(1)=:x, \ \ \alpha,\beta\in \Gamma_{x_0x}\setminus \Gamma_0,\ \ \M g^p(x)<\infty \} \] is called the \emph{fundamental system of loops} with basepoint $x_0\in X$.
\end{definition}
Here $\Gamma_{xy}$ denotes the family of rectifiable curves joining the points $x$ and $y$; the $p$-support of a path family $\Gamma_0$ of zero $p$-modulus is $$\spt_p\Gamma_0:=\bigcap_\rho\{ \M \rho^p=\infty \}$$ where the intersection is taken over all $p$-integrable Borel functions $\rho\ge 0$ such that $$\int_\gamma\rho=\infty \quad \textrm{ for all }\gamma\in\Gamma_0.$$

Let $u\in \Nem pXY$ and let $g\in L^p(\mu)$ be a $p$-weak upper gradient of $u$. Suppose $\Gamma_0$ and $x_0$ in Definition \ref{fsl} are such that $g$ is an upper gradient of $u$ along any loop $\gamma\in \F_{x_0}(g;\Gamma_0)$. Then we say that $\F_{x_0}(g;\Gamma_0)$ is a fundamental system of loops\emph{ for $u$} with basepoint $x_0$.

\begin{definition}\label{imgp}
If $u\in \Nem pXY$ and $\F_{x_0}(g;\Gamma_0)$ is a fundamental system of loops for $u$, with basepoint $x_0$, we may associate to them the subgroup \[ u_\sharp\F_{x_0}(g;\Gamma_0):=\langle [u\circ\gamma]:\gamma\in \F_{x_0}(g;\Gamma_0)\rangle\le \pi(Y,u(x_0)). \] We call this the \emph{induced subgroup of the fundamental system of loops by $u$}.
\end{definition}

Note that in this definition we do \emph{not} require $u$ to induce a homomorphism $\pi(X)\to \pi(Y)$. 

\bigskip\noindent Our first lemma deals with minimal $p$-weak upper  gradients of lifts.

\begin{lemma}\label{liftgrad}
Let $u\in \Nem pXY$ and $h\in \Nem pX{\hat Y}$ be a lift of $u$. Then $g_{h}=g_u$ almost everywhere.
\end{lemma}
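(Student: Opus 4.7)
The plan is to prove the two inequalities $g_u \le g_h$ and $g_h \le g_u$ almost everywhere separately. The first inequality is immediate: since $\phi\colon \widehat Y\to Y$ is a local isometry it is in particular $1$-Lipschitz, so for any rectifiable $\gamma$ along which $g_h$ is an upper gradient of $h$,
\[
d_Y(u(\gamma(b)),u(\gamma(a)))=d_Y(\phi(h(\gamma(b))),\phi(h(\gamma(a))))\le \hat d(h(\gamma(b)),h(\gamma(a)))\le \int_{\gamma|_{[a,b]}} g_h.
\]
Hence $g_h$ is a $p$-weak upper gradient of $u$, and by minimality $g_u\le g_h$ almost everywhere.

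For the reverse inequality I plan to show that $g_u$ is itself a $p$-weak upper gradient of $h$. Choose a path family $\Gamma_0$ of zero $p$-modulus outside of which the following hold: $g_u$ (resp.\ $g_h$) is an upper gradient of $u$ (resp.\ $h$) along $\gamma$, both $u\circ\gamma$ and $h\circ\gamma$ are continuous, and $\gamma$ does not meet the $p$-capacity zero set $\{\phi\circ h\neq u\}$ in a set of positive length. For such $\gamma$, the two continuous maps $\phi\circ h\circ\gamma$ and $u\circ\gamma$ agree on a dense subset of $[0,1]$, hence everywhere; in particular $h\circ\gamma$ is a continuous lift of $u\circ\gamma$.

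Given such $\gamma$, the image $u\circ\gamma([0,1])$ is compact in $Y$, so by the covering property there exists $r>0$ such that each ball $B(u(\gamma(t)),r)$ is a covering neighbourhood on which $\phi$ restricts to an isometry on every sheet. Partition $[0,1]$ as $0=a_0<\cdots<a_m=1$ with $u\circ\gamma([a_j,a_{j+1}])\subset B(u(\gamma(a_j)),r)$. Since $h\circ\gamma$ is continuous and $h(\gamma(a_j))\in \phi^{-1}(u(\gamma(a_j)))$, the arc $h\circ\gamma([a_j,a_{j+1}])$ lies in the single sheet $\widehat B(h(\gamma(a_j)),r)$, where $\phi$ is an isometry. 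Therefore
\[
\hat d(h(\gamma(a_{j+1})),h(\gamma(a_j)))= d_Y(u(\gamma(a_{j+1})),u(\gamma(a_j)))\le \int_{\gamma|_{[a_j,a_{j+1}]}} g_u,
\]
and summing over $j$ by the triangle inequality gives $\hat d(h(\gamma(1)),h(\gamma(0)))\le \int_\gamma g_u$. The same estimate applied to every subcurve shows that $g_u$ is an upper gradient of $h$ along $\gamma$. Since this holds outside the $p$-exceptional family $\Gamma_0$, minimality of $g_h$ yields $g_h\le g_u$ almost everywhere, and combining with the first step completes the proof.

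The only delicate point is the passage from the $p$-q.e.\ identity $\phi\circ h=u$ to the \emph{pointwise} identity $\phi\circ h\circ\gamma=u\circ\gamma$ along good curves; this is handled as described by combining continuity along $p$-almost every curve with the standard fact that sets of $p$-capacity zero meet $p$-a.e.\ curve in an $\mathcal L^1$-null set. Once this is in place, the covering/partition argument is routine.
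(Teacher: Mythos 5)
Your proposal is correct and follows essentially the same route as the paper: the easy direction via the $1$-Lipschitz property of the local isometry $\phi$, and the reverse direction via the pointwise identity $\phi\circ h\circ\gamma=u\circ\gamma$ along $p$-a.e.\ curve together with a partition of $\gamma$ into pieces whose $u$-images sit in covering neighbourhoods, on which $\phi$ restricts to a sheetwise isometry. Your extra care in justifying the passage from the quasi-everywhere identity $\phi\circ h=u$ to the curvewise identity is a welcome elaboration of a step the paper takes for granted.
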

\begin{proof}
Since \[ d(u(x),u(y))\le \hat d(h(x),h(y))\le \int_\gamma g_{h} \] for $p$-almost every curve, it follows that $g_u\le g_{h}$ almost everywhere. 

To see the opposite inequality choose a path family $\Gamma$ of $p$-modulus zero such that $g_u$ is an upper gradient of $u$ along $\gamma$ and $\phi\circ h \circ \gamma=u\circ \gamma$ whenever $\gamma\notin \Gamma$.

By the (absolute) continuity of $u\circ \gamma$ we may choose $0=a_0<\cdots <a_n=1$ and points $y_0,\ldots,y_{n-1}\in Y $ so that $u\circ \gamma ([a_j,a_{j+1}])\subset B(y_j,r_j)$, where $B(y_j,r_j)$ is a covering neighbourhood for all $j=0,\ldots , n-1$ and consequently $\phi$ restricted to $h\circ\gamma([a_j,a_{j+1}])$ is an isometry. Thus
\begin{align*}
\hat d(h(\gamma(0)),h(\gamma(1)))&\le \sum_{j=0}^{n-1} \hat d(h(\gamma(a_j)),h(\gamma(a_{j+1}))) =\sum_{j=0}^{n-1} d( u(\gamma(a_j)), u(\gamma(a_{j+1}))) \\
&\le \sum_{j=0}^{n-1} \int_{\gamma|_{[a_j,a_{j+1}]}}g_u=\int_\gamma g_u.
\end{align*}
Thus $g_{h}\le g_u$ almost everywhere.
\end{proof}

\bigskip\noindent A rather direct consequence of Lemma \ref{liftgrad} is that any $p$-integrable $p$-weak upper gradient of $u$ is also a $p$-weak upper gradient of any lift of $u$, and vice versa.

The following lemma is a version of the uniqueness of lifts in the Newtonian setting.
\begin{proposition}\label{uniqlift}
Suppose $h_1,h_2\in \Nem pX{\hat Y}$ are two lifts of a Newtonian map $u\in \Nem pXY$. If $h_1$ agrees with $h_2$ on a set of positive $p$-capacity then they agree $p$-quasieverywhere.
\begin{proof}
Consider the function $f:X\to\R$, given by $x\mapsto \hat d(h_1(x),h_2(x))$. Then $f\in \Ne pX$. In light of \cite[Lemma 4.3]{teri} it suffices to show that $F:=\{f=0\}$ is both $p$-quasiclosed and $p$-quasiopen (see \cite[Definition 4.2]{teri} for the definitions). Quasiclosedness is evident since $f$ is $p$-quasicontinuous.

To see that $F$ is $p$-quasiopen, let $\varepsilon >0$ and $U\subset X$ be an open set for which $\Cp_p(U)<\varepsilon$ and $f|_{X\setminus U}$ is continuous. Then $f(X\setminus U)\subset Y$ is compact and there is a positive radius $r_0>0$ so that $B(x,r_0)$ is a covering neighbourhood of $x$ for all $x\in X\setminus U$.

We conclude that if $x\in X\setminus U$ then $f(x)=\hat d(h_1(x),h_2(x))=0$ if and only if $f(x)< r_0$ since the set of points lying on the fibre $\phi^{-1}(u(x))$ is discrete. Consequently we have the equality $$F\setminus U=\{x\in X\setminus U: f(x)<r_0 \}=(f|_{X\setminus U})^{-1}((-\infty,r_0))$$ so that $F\setminus U$ is open in $X\setminus U$. This finishes the proof.
\end{proof}
\end{proposition}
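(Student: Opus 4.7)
The plan is to follow a standard connectedness-style argument adapted to the Newtonian setting. Define $f:X\to\R$ by $f(x)=\hat d(h_1(x),h_2(x))$. Since $h_1,h_2\in \Nem pX{\widehat Y}$, triangle inequality considerations give $f\in \Ne pX$ (with $g_{h_1}+g_{h_2}$ as an upper gradient, say), hence $f$ is $p$-quasicontinuous. The set $F=\{f=0\}$ agrees up to a $p$-capacity null set with $\{h_1=h_2\}$, so by assumption $\Cp_p(F)>0$. The goal becomes showing that $\Cp_p(X\setminus F)=0$, and the natural route is to verify that $F$ is simultaneously $p$-quasiclosed and $p$-quasiopen and then invoke \cite[Lemma 4.3]{teri}, which guarantees that a $p$-quasiclosed, $p$-quasiopen set is either $p$-capacity null or has $p$-capacity null complement.

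Quasiclosedness of $F$ is immediate from the $p$-quasicontinuity of $f$: for every $\varepsilon>0$ there is an open $U\subset X$ with $\Cp_p(U)<\varepsilon$ such that $f|_{X\setminus U}$ is continuous, and $F\setminus U=(f|_{X\setminus U})^{-1}(\{0\})$ is then relatively closed in $X\setminus U$.

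The heart of the argument is quasiopenness, and it is the only place where the covering structure enters. Given $\varepsilon>0$, pick an open $U$ of small $p$-capacity outside which both $f$ and $u$ are continuous. Since $X$ is compact, $u(X\setminus U)\subset Y$ is compact and can be covered by finitely many covering neighbourhoods, so there exists a uniform radius $r_0>0$ such that $B(u(x),r_0)$ is a covering neighbourhood for every $x\in X\setminus U$. For such $x$ the two points $h_1(x),h_2(x)$ both lie in the discrete fibre $\phi^{-1}(u(x))$; since distinct sheets of $\phi^{-1}(B(u(x),r_0))$ are isometric copies of this ball, points in distinct sheets are separated by distance at least $r_0$. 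Consequently we have the dichotomy $f(x)=0$ or $f(x)\ge r_0$ on $X\setminus U$, which gives
\[ F\setminus U = (f|_{X\setminus U})^{-1}([0,r_0)), \]
a set relatively open in $X\setminus U$. This establishes $p$-quasiopenness.

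Combining the two properties with $\Cp_p(F)>0$ and \cite[Lemma 4.3]{teri} forces $\Cp_p(X\setminus F)=0$, which is exactly the desired $p$-quasieverywhere agreement. The main obstacle is really the quasiopenness step: without the uniform covering radius extracted from compactness of $X$, one cannot upgrade the pointwise fibre-discreteness into a relatively open set modulo small capacity, and the whole argument breaks down.
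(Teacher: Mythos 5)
Your proof is correct and follows essentially the same route as the paper's: reduce to showing $F=\{f=0\}$ is $p$-quasiclosed and $p$-quasiopen, invoke the quasi-connectedness lemma, and obtain quasiopenness from a uniform covering radius $r_0$ forcing the dichotomy $f=0$ or $f\ge r_0$ off a set of small capacity. (You even state the compactness/covering step more carefully than the paper, which writes $f(X\setminus U)\subset Y$ and $B(x,r_0)$ where $u(X\setminus U)$ and $B(u(x),r_0)$ are meant.)
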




\bigskip\noindent We set notation for the rest of this section. The set of all rectifiable paths in $X$ is denoted by $\Gamma(X)$. For $A\subset X$ we denote $$\Gamma_A=\{ \gamma\in\Gamma(X): \gamma^{-1}(A)\ne \varnothing \}.$$ For $g\in L^p(\mu)$ let \[ \Gamma_g=\{ \gamma\in \Gamma(X): \int_\gamma g=\infty \} \] and for a pair $(u,g)$ of a map and its $p$-weak upper gradient we let \[\Gamma_{u,g}=\{\gamma\in \Gamma(X): g\textrm{ is an upper gradient of $u$ along }\gamma \}. \] Note that $$\Gamma_g\subset \Gamma(X)\setminus\Gamma_{u,g}$$ and $$\Mod_p(\Gamma(X)\setminus\Gamma_{u,g})=0 $$ if $g$ is a $p$-weak upper gradient of $u$. Moreover $$\Mod_p(\Gamma_F)=0$$ if and only if $\Cp_p(F)=0$.

\begin{lemma}\label{trivial}
Let $\Gamma_0$ be a path family of $p$-modulus zero. Let $\rho \ge 0$ be a $p$-integrable Borel function such that $$\int_\gamma\rho=\infty\textrm{ for all }\gamma\in \Gamma_0$$  and suppose $x,y\notin \{\M \rho^p=\infty\}$. Then \[ \Mod_p(\Gamma_{xy}\cap\Gamma_0;\mu_{xy})=0. \]
\begin{proof}
By Lemma \ref{facts} $$\int_X\rho^p\ud\mu_{xy}<\infty.$$Thus there is a nonnegative Borel map  $\rho \in L^p(\mu_{xy})$ such that $$\int_\gamma\rho=\infty \textrm{ for all }\gamma\in \Gamma_{xy}\cap\Gamma_0.$$ The claim follows.
\end{proof}
\end{lemma}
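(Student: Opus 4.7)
The plan is to build an admissible function for the modulus $\Mod_p(\Gamma_{xy}\cap\Gamma_0;\mu_{xy})$ whose $L^p(\mu_{xy})$-norm can be made arbitrarily small, using $\rho$ itself (rescaled). The two hypotheses are designed precisely for this: the condition $x,y\notin \{\M \rho^p=\infty\}$ feeds directly into Lemma \ref{facts} to give $\rho\in L^p(\mu_{xy})$, while the condition $\int_\gamma \rho=\infty$ for $\gamma\in\Gamma_0$ provides admissibility for any scalar multiple of $\rho$.

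The key steps are the following. First I would invoke Lemma \ref{facts} with the function $\rho$ and the points $x,y$; since $\M\rho^p(x)+\M\rho^p(y)<\infty$ by hypothesis, this yields
\[
\int_X \rho^p\,\ud\mu_{xy}<\infty.
\]
Next, for every $\varepsilon>0$ consider the nonnegative Borel function $\varepsilon\rho$. For any $\gamma\in \Gamma_{xy}\cap\Gamma_0$ one has $\gamma\in\Gamma_0$, so by hypothesis $\int_\gamma \varepsilon\rho=\infty\ge 1$; hence $\varepsilon\rho$ is admissible for $\Gamma_{xy}\cap\Gamma_0$ in the definition of $p$-modulus with respect to the measure $\mu_{xy}$. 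Consequently,
\[
\Mod_p(\Gamma_{xy}\cap\Gamma_0;\mu_{xy}) \le \int_X (\varepsilon\rho)^p\,\ud\mu_{xy}=\varepsilon^p\int_X\rho^p\,\ud\mu_{xy}.
\]
Letting $\varepsilon\to 0$ and using the finiteness from the first step gives the claim.

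There is no real obstacle here; the content of the lemma is essentially that modulus with respect to $\mu_{xy}$ behaves well on the exceptional family $\Gamma_0$ once one localises to paths between two points outside the maximal-function singularity set. All the weight of the argument has already been shifted into Lemma \ref{facts}. The only subtle point worth double-checking is that admissibility is preserved under restriction of the path family (we admit functions for $\Gamma_0$ but need them for the subfamily $\Gamma_{xy}\cap\Gamma_0$), which is immediate from the definition of modulus. Note also that it is crucial that the admissibility condition $\int_\gamma \rho\ge 1$ is satisfied with infinite value, which is what allows the scaling trick to drive the modulus to zero rather than merely bounding it.
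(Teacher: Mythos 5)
Your proof is correct and follows the same route as the paper: apply Lemma \ref{facts} to get $\rho\in L^p(\mu_{xy})$, then use the standard characterization that a family carrying a $p$-integrable function with infinite line integrals has zero modulus. The paper leaves that last step implicit ("the claim follows"), whereas you spell it out via the scaling $\varepsilon\rho$; the content is identical.
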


\begin{lemma}\label{t1}
Suppose $\rho\ge 0$ is a $p$-integrable Borel function and $x_0\in X$ a point such that $\M \rho^p(x_0)<\infty$. Define the function $f_{x_0,\rho}:X\to [0,\infty]$ by \[ x\mapsto\inf_{\gamma\in\Gamma_{x_0x}}\int_\gamma\rho. \] Then $f_{x_0,\rho}$ has $p$-weak upper gradient $\rho$.
\end{lemma}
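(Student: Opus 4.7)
The plan is to exploit the triangle-inequality structure built into the infimum definition of $f_{x_0,\rho}$. I would first isolate the exceptional family $\Gamma_\rho := \{\gamma \in \Gamma(X) : \int_\gamma \rho = \infty\}$, which has $\Mod_p(\Gamma_\rho)=0$: since $\rho \in L^p(\mu)$, for each $n\ge 1$ the function $\rho/n$ is admissible for $\Gamma_\rho$, so $\Mod_p(\Gamma_\rho)\le n^{-p}\|\rho\|_{L^p(\mu)}^p$, which tends to $0$ as $n\to\infty$. The goal is then to show that $\rho$ is an honest upper gradient of $f_{x_0,\rho}$ along every $\gamma \in \Gamma(X)\setminus \Gamma_\rho$.

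Fix such a $\gamma:[0,1]\to X$ with endpoints $a=\gamma(0)$ and $b=\gamma(1)$; by assumption $\int_\gamma \rho<\infty$. Assume first $f_{x_0,\rho}(a)<\infty$. For $\varepsilon>0$ pick $\alpha \in \Gamma_{x_0 a}$ with $\int_\alpha \rho < f_{x_0,\rho}(a)+\varepsilon$; then the concatenation $\alpha*\gamma$ lies in $\Gamma_{x_0 b}$, and
\[ f_{x_0,\rho}(b) \le \int_{\alpha*\gamma}\rho = \int_\alpha \rho + \int_\gamma \rho < f_{x_0,\rho}(a) + \varepsilon + \int_\gamma \rho. \]
Sending $\varepsilon\to 0$ and repeating the argument with $\gamma^{-1}$ in place of $\gamma$ yields the symmetric bound $|f_{x_0,\rho}(a)-f_{x_0,\rho}(b)| \le \int_\gamma \rho$, which is the upper gradient inequality along $\gamma$. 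The same concatenation trick also shows that for $\gamma\notin\Gamma_\rho$ the values $f_{x_0,\rho}(a)$ and $f_{x_0,\rho}(b)$ are simultaneously finite or simultaneously infinite: any $\beta\in\Gamma_{x_0 b}$ with $\int_\beta\rho<\infty$, concatenated with $\gamma^{-1}$, produces a curve from $x_0$ to $a$ of finite $\rho$-length, and vice versa.

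The upshot is that off the zero-modulus family $\Gamma_\rho$, either both endpoint values of $f_{x_0,\rho}$ are finite and the triangle-inequality argument above delivers the bound, or both values are $\infty$ and the inequality holds vacuously (equivalently, under the convention $\infty-\infty=0$). I expect no serious obstacle: the core step is a one-line concatenation argument and the $\Mod_p(\Gamma_\rho)=0$ observation is standard. The only delicate point is bookkeeping in the $\infty$-valued case, which is cleanly handled by the fact that $\int_\gamma\rho<\infty$ forces finiteness to propagate between the endpoints of $\gamma$.
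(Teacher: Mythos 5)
Your concatenation argument is exactly the paper's core step: pick $\alpha\in\Gamma_{x_0\gamma(0)}\setminus\Gamma_\rho$, concatenate with $\gamma$, take the infimum, and symmetrize to get $|f(\gamma(1))-f(\gamma(0))|\le\int_\gamma\rho$ for $\gamma\notin\Gamma_\rho$. That part is fine. The gap is in how you dispose of the case where the endpoint values are infinite. You never show that $f_{x_0,\rho}$ is finite anywhere except (trivially) at $x_0$; your propagation argument only shows that finiteness spreads along curves of finite $\rho$-length, not that it holds to begin with. In a general metric measure space $\Gamma_{x_0x}\setminus\Gamma_\rho$ could be empty for every $x\ne x_0$, making $f\equiv\infty$ off the basepoint. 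Declaring the both-infinite case ``vacuous'' (or invoking $\infty-\infty=0$) conflicts with the standard convention for upper gradients of $\overline{\R}$-valued functions, under which the left-hand side is interpreted as $+\infty$ as soon as one endpoint value is infinite; with that convention a curve $\gamma\notin\Gamma_\rho$ joining two points where $f=\infty$ would \emph{violate} the inequality, not satisfy it trivially.

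The paper closes exactly this hole, and the tell is the hypothesis $\M\rho^p(x_0)<\infty$, which your proof never uses. Via Lemma \ref{trivial} --- which rests on Keith's modulus lower bound (\ref{kei}) and hence on the Poincar\'e inequality --- one gets that $\Gamma_{x_0x}\setminus\Gamma_\rho\neq\varnothing$, so $f(x)<\infty$, for every $x\notin\{\M\rho^p=\infty\}$. With $f$ finite off this small set, the triangle-inequality estimate for curves whose endpoints avoid $\{\M\rho^p=\infty\}$ suffices, by \cite[Proposition 1.50]{bjo11}, to conclude that $\rho$ is a genuine $p$-weak upper gradient. This finiteness is also what makes the lemma usable downstream (Lemma \ref{lebpoint} needs $f$ finite with strong value $0$ at $x_0$). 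So: right mechanism, but you are missing the Poincar\'e-inequality input that makes the infimum nondegenerate.
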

\begin{proof}
We abbreviate notation by setting $f_{x_0,\rho}=:f$. By Lemma \ref{trivial} $f(x)<\infty$ for any $x\notin \{ \M \rho^p=\infty \}=:N$. Suppose $\gamma$ is a path such that $\gamma(0),\gamma(1)\notin N$. If $\int_\gamma\rho=\infty$ the upper gradient inequality (\ref{ug}) holds trivially. Suppose  $\int_\gamma\rho<\infty$ and let $\alpha\in\Gamma_{x_0\gamma(0)}\setminus\Gamma_\rho$ be a path. Then $\gamma\alpha\in\Gamma_{x_0\gamma(1)}\setminus\Gamma_\rho$, and $$ f(\gamma(1))\le \int_\alpha\rho+\int_\gamma\rho.$$ Taking infimum over $\alpha$ we obtain $$f(\gamma(1))\le f(\gamma(0))+\int_\gamma\rho.$$ Interchanging the roles of $\gamma(0)$ and $\gamma(1)$ we obtain $$|f(\gamma(1))-f(\gamma(0))|\le \int_\gamma\rho.$$ By \cite[Proposition 1.50]{bjo11} it follows that $\rho$ is a $p$-weak upper gradient of $f$.
\end{proof}

\begin{lemma}\label{lebpoint}
Suppose $\rho\ge 0$ is a $p$-integrable Borel function and $x_0\in X$ a point such that $\M \rho^p(x_0)<\infty$. Then the map $f_{x_0,\rho}$ defined in Lemma \ref{t1} has a strong Lebesgue point $x_0$ and the strong value 0: \[ \lim_{r\to 0}\dashint_{B(x_0,r)}f^p\ud\mu=0.\]
\end{lemma}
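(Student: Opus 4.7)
The plan is to apply the preceding two lemmas to obtain a strong Lebesgue value $a$ for $f := f_{x_0, \rho}$ at $x_0$, and then to argue separately that $a = 0$. By Lemma \ref{t1}, $\rho$ is a $p$-weak upper gradient of $f$, and since $\M\rho^p(x_0) < \infty$ by hypothesis, Lemma \ref{leb} gives that $x_0$ is a strong Lebesgue point of $f$ with some strong value $a \in \R$. The quantitative estimate from the proof of Lemma \ref{leb} actually shows
\[
\left(\dashint_{B(x_0, r)} |f - a|^p \, \ud\mu\right)^{1/p} \le C r \, \M\rho^p(x_0)^{1/p} \longrightarrow 0,
\]
so once $a = 0$ is established the claim $\dashint_{B(x_0,r)} f^p \, \ud\mu \to 0$ follows immediately.

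To show $a = 0$, I derive a pointwise bound on $f$ via the Keith modulus inequality \eqref{kei}. The crucial observation is that, directly from the definition $f(y) = \inf_{\gamma \in \Gamma_{x_0 y}} \int_\gamma \rho$, the Borel function $\rho/f(y)$ is admissible for $\Gamma_{x_0 y}$ whenever $f(y) > 0$. Combining \eqref{kei} with Lemma \ref{facts}, for $y \ne x_0$ with $\M\rho^p(y) < \infty$ one obtains
\[
d(x_0, y)^{1-p} \le C \Mod_p(\Gamma_{x_0 y}; \mu_{x_0 y}) \le \frac{C}{f(y)^p} \int_X \rho^p \, \ud\mu_{x_0 y} \le \frac{C\bigl(\M\rho^p(x_0) + \M\rho^p(y)\bigr)}{f(y)^p},
\]
which rearranges to the pointwise bound
\[
f(y)^p \le C d(x_0, y)^{p-1}\bigl(\M\rho^p(x_0) + \M\rho^p(y)\bigr).
\]

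Since $a = \lim_{r \to 0} f_{B(x_0, r)}$ by the very construction in Lemma \ref{leb}, it suffices to bound $\dashint_{B(x_0, r)} f \, \ud\mu$ as $r \to 0$. Using $\dashint_{B(x_0, 3r)} \rho^p \, \ud\mu \le \M\rho^p(x_0)$ and the weak-type $(1,1)$ bound for the maximal operator restricted to balls of radius at most $2r$, together with the doubling property to absorb the contribution of larger scales on $B(x_0,r)$ (which is uniformly bounded by $C\M\rho^p(x_0)$), yields
\[
\mu\bigl(\{ y \in B(x_0, r) : \M\rho^p(y) > M \}\bigr) \le \frac{C\, \M\rho^p(x_0)\, \mu(B(x_0, r))}{M}
\]
for $M$ large enough. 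Translating to a distribution-function estimate for $f$ via the pointwise bound and integrating by layer-cake, split at the threshold $T_r := C \M\rho^p(x_0)^{1/p} r^{(p-1)/p}$, delivers $f_{B(x_0, r)} \le C \M\rho^p(x_0)^{1/p} r^{(p-1)/p} \to 0$, so $a = 0$.

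The main obstacle is that $\M\rho^p$ is not guaranteed to be locally integrable (since $\rho \in L^p(\mu)$ only yields $\rho^p \in L^1(\mu)$), which blocks any direct integration of the pointwise bound for $f^p$; the distribution-function approach extracts the needed decay from the weak-type inequality instead.
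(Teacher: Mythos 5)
Your argument is correct, but it is genuinely different from the one in the paper, and the difference is worth recording. The paper truncates the density, setting $\rho_n=\min\{n,\rho\}$, so that the functions $h_n=f_{x_0,1+\rho_n}$ are Lipschitz with obvious strong value $0$ at $x_0$; it then proves the lower semicontinuity $h\le\liminf_n h_n$ by a compactness argument for curves of uniformly bounded length (\cite[Proposition 4]{kei03}), passes to the limit with Fatou, and finally upgrades the $L^1$-average to the $L^p$-average via the Poincar\'e inequality. You instead exploit modulus duality: since $\rho/f(y)$ is admissible for $\Gamma_{x_0y}$, the Keith lower bound \eqref{kei} together with Lemma \ref{facts} yields the pointwise estimate $f(y)^p\le C\,d(x_0,y)^{p-1}\bigl(\M\rho^p(x_0)+\M\rho^p(y)\bigr)$, which you then average using the weak-type $(1,1)$ inequality and a Kolmogorov/layer-cake computation. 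Both routes lean on \cite{kei03}, but on different results from it; yours replaces the curve-compactness and truncation machinery by a quantitative decay estimate $f(y)=O\bigl(d(x_0,y)^{(p-1)/p}\bigr)$ at points of finite maximal function, which is more explicit and of some independent interest, at the cost of the weak-type bookkeeping needed because $\M\rho^p$ is only weakly integrable — a difficulty you correctly identify and circumvent.

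One point of order: you invoke Lemma \ref{leb} for $f=f_{x_0,\rho}$ at the outset, but that lemma is stated for maps in $\Neml pXZ$, and local $p$-integrability of $f$ is not known a priori (your own pointwise bound only dominates $f^p$ by $\M\rho^p$, which need not be in $L^1$). This is not a genuine gap, because your pointwise bound plus Kolmogorov gives $f\in L^q(X)$ for every $q<p$, in particular $f\in L^1_{loc}$, which is all the chaining argument in the proof of Lemma \ref{leb} actually uses; but you should establish the pointwise bound \emph{first} and only then run the Lebesgue-point argument, so that the averages $f_{B(x_0,R)}$ are known to be finite.
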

\begin{proof}
Clearly $$ 0\le f_{x_0,\rho}\le f_{x_0,1+\rho}=:h.$$ It suffices to prove the claim for $h$ which, by Lemma \ref{t1}, has a $p$-weak upper gradient $1+\rho$. First we prove the claim without the exponent $p$.

Let $n\ge 0$ and set $$\rho_n=\min\{n,\rho\}\textrm{ and }h_n=f_{x_0,1+\rho_n}. $$ Since $\rho_n\le n$ and $X$ is quasiconvex it follows that $h_n$ is a $Cn$-Lipschitz function satisfying $h_n\le h$ pointwise.

\bigskip\noindent For each fixed $n$, every point is a Lebesgue point for $h_n$ and the strong value (cf. Lemma \ref{leb}) $a_n(x)$ at any point agrees with the value of the extension of $h_n|_{X\setminus\{ \M \rho^p=\infty \}}$ to a Lipschitz function on the whole $X$. It follows that if $\gamma\in\Gamma_{x_0x}\setminus\Gamma_\rho$ for a point $x\notin \{ \M\rho^p=\infty \}$ then $a_n(x_0)$ is given by $\lim_{t\to 0}h_n(\gamma(t))$. This clearly equals zero. Thus $x_0$ is a Lebesgue point for $h_n$ and the strong value equals 
\begin{equation}\label{denspo}
\lim_{r\to 0}\dashint_{B(x_0,r)}h_n\ud\mu=\lim_{t\to 0}h_n(\gamma(t))=0.
\end{equation}

\bigskip\noindent Let $x\notin \{ \M\rho^p=\infty \}$. We want to prove that $$h(x)\le \liminf_{n\to \infty}h_n(x). $$ To this end fix a sequence $(n_k)$ so that $$\liminf_{n\to\infty}h_n(x)=\lim_{k\to\infty}h_{n_k}(x).$$ For each $k$, let $\gamma_{n_k}\in \Gamma_{x_0x}\setminus\Gamma_\rho$ be so that $$\int_{\gamma_{n_k}}(1+\rho_{n_k})<h_{n_k}(x)+2^{-k}.$$ The sequence $(\gamma_{n_k})$ has length uniformly bounded by \[ \ell(\gamma_{n_k})\le \int_{\gamma_{n_k}}(1+\rho_{n_k})<h_{n_k}(x)+2^{-k}\le h(x)+1. \] By \cite[Proposition 4]{kei03}, $(\gamma_{n_k})$ subconverges to a path $\gamma\in\Gamma_{x_0x}$ and $$\int_\gamma(1+\rho)\le \liminf_{k\to\infty}\int_{\gamma_{n_k}}(1+\rho_{n_k})$$ (incidentally showing that $\gamma\notin\Gamma_\rho$) from which it follows that 
\begin{equation}\label{fatou}
h(x)\le \lim_{k\to\infty}(h_{n_k}(x)+2^{-k})= \liminf_{n\to \infty}h_n(x).
\end{equation}

\bigskip\noindent Using (\ref{denspo}) and \cite[Theorem 8.1.55]{HKST07} we estimate \[ \dashint_{B(x_0,r)}h_n\ud\mu=|h_n(x_0)-(h_n)_{B(x_0,r)}|\le Cr\M(1+\rho_n)^p(x_0)^{1/p}\le Cr\M(1+\rho)^p(x_0)^{1/p} \] for all $n$. Together with (\ref{fatou}) and Fatou's lemma we have \[ \dashint_{B(x_0,r)}h\ud\mu\le Cr\M(1+\rho)^p(x_0)^{1/p}\to 0  \]as $r\to 0$.

\bigskip\noindent Once $$\lim_{r\to 0}\dashint_{B(x_0,r)}h\ud\mu=0$$ is established we use the Poincar\'e inequality to obtain the estimate \[ \left(\dashint_{B(x_0,r)}h^p\ud\mu\right)^{1/p} \le \dashint_{B(x_0,r)}h\ud\mu+Cr\M\rho^p(x_0)^{1/p}.\] From this the claim follows.
\end{proof}

\begin{lemma}\label{t0}
Let $u\in \Nem pXY$ and let $g\in L^p(\mu)$ be a $p$-weak upper gradient of $u$. Suppose $\Gamma_0$ is a path-family of zero $p$-modulus containing all paths in $\Gamma(X)\setminus\Gamma_{u,g}$. Then for any $x_0\in X$ with $$x_0\notin \spt_p\Gamma_0\cup \{\M g^p=\infty \}$$ the map $u$ is continuous along any path $\gamma$ in the fundamental system of loops $\F_{x_0}(g;\Gamma_0)$.

In other words $\F_{x_0}(g;\Gamma_0)$ is a fundamental system of loops for $u$ for any $x_0\notin \spt_p\Gamma_0\cup \{\M g^p=\infty \}$.
\begin{proof}
Let $\gamma=\alpha\beta^{-1}$ where $\alpha,\beta\in \Gamma_{x_0x}\setminus \Gamma_0$ and $\M g^p(x)<\infty$. Since $\Gamma_0$ contains all the curves along which $g$ fails to be an upper gradient of $u$ it follows that $u$ is absolutely continuous along $\alpha$ and $\beta$, and consequently along $\gamma$.
\end{proof}
\end{lemma}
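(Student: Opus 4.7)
The plan is to unfold the definition of the fundamental system of loops $\F_{x_0}(g;\Gamma_0)$ and reduce the continuity claim to the behaviour of $u$ along the constituent paths of a typical loop. By Definition \ref{fsl}, every $\gamma\in\F_{x_0}(g;\Gamma_0)$ has the form $\gamma=\alpha\beta^{-1}$ with $\alpha,\beta\in\Gamma_{x_0 x}\setminus\Gamma_0$ for some endpoint $x$ with $\M g^p(x)<\infty$. So it suffices to show that $u\circ\alpha$ and $u\circ\beta$ are continuous, and (for the ``in other words'' part) that $g$ is an upper gradient of $u$ along $\gamma$ itself.

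First I would observe that the hypothesis $\Gamma(X)\setminus\Gamma_{u,g}\subset\Gamma_0$ is exactly the statement that any rectifiable curve not in $\Gamma_0$ lies in $\Gamma_{u,g}$, i.e.\ $g$ is an upper gradient of $u$ along it in the subpath sense \eqref{uga}. Applied to $\alpha$ and $\beta$, this gives $\int_\alpha g,\int_\beta g<\infty$ together with the subpath upper gradient inequality, and the remark immediately following \eqref{uga} then forces $u\circ\alpha$ and $u\circ\beta$ to be absolutely continuous.

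Since $\gamma=\alpha\beta^{-1}$ is a concatenation, $u\circ\gamma$ is the concatenation of $u\circ\alpha$ and $(u\circ\beta)^{-1}$, hence absolutely continuous, yielding the continuity assertion. For the stronger conclusion that $\F_{x_0}(g;\Gamma_0)$ is a fundamental system of loops for $u$, I would observe that any subpath of $\gamma$ splits at the midpoint $x$ into subpaths of $\alpha$ and $\beta^{-1}$, so the upper gradient inequality along such a subpath follows from the triangle inequality applied to the upper gradient inequalities for $\alpha$ and $\beta$; finiteness of $\int_\gamma g$ is then immediate from additivity of the line integral.

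I do not expect any substantial obstacle: the lemma is essentially a bookkeeping verification that the restriction $\alpha,\beta\notin\Gamma_0$ imposed in Definition \ref{fsl}, combined with the choice $\Gamma_0\supset\Gamma(X)\setminus\Gamma_{u,g}$, is exactly what is needed to guarantee compatibility with the given map $u$. The basepoint conditions $\M g^p(x_0)<\infty$ and $x_0\notin\spt_p\Gamma_0$ only play the role of ensuring that $\F_{x_0}(g;\Gamma_0)$ is defined in the first place and do not enter the argument further.
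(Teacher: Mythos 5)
Your proposal is correct and follows essentially the same route as the paper: unfold Definition \ref{fsl}, use the hypothesis $\Gamma(X)\setminus\Gamma_{u,g}\subset\Gamma_0$ to conclude that $g$ is an upper gradient of $u$ along $\alpha$ and $\beta$ (hence $u\circ\alpha$, $u\circ\beta$ are absolutely continuous), and concatenate. Your extra verification that the subpath upper gradient inequality holds across the midpoint is a harmless elaboration of what the paper's one-line proof leaves implicit.
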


\bigskip\noindent\subsubsection*{Proof of Theorem \ref{liftchar}} With these tools we are ready to prove Theorem \ref{liftchar}. In fact we will give a more specific statement, from which the result follows.

\begin{theorem}\label{liftcond}
Let $1\le q<p$ be such that $X$ supports a weak $(1,q)$-Poincar\'e inqeuality.  Suppose that $u\in \Nem pXY$, $\rho = (\M g_u^q )^{1/q}\in L^p(\mu)$ and \[ N=\{ \M \rho^p=\infty \}\cup \spt_p(\Gamma(X)\setminus\Gamma_{u,\rho}). \] Let $x_0\in X\setminus N$ and $\hat y_0\in \phi^{-1}(u(x_0))$. Then $u$ has a lift $h\in \Nem pX{\widehat Y}$ with value $h(x_0)=\hat y_0$ in the strong sense if and only if, for some fundamental system of loops $\F_{x_0}(g;\Gamma_0)$ for $u$, we have $$u_\sharp\F_{x_0}(g;\Gamma_0)\le \phi_\sharp\pi(\widehat Y,\hat y_0).$$
\end{theorem}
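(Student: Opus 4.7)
The argument will run parallel to the classical construction of lifts, the main differences being that (i) we must choose a distinguished fundamental system of loops compatible with $u$, and (ii) we only expect to define a lift $p$-quasieverywhere, in a way that is controlled by a Newtonian norm. Fix the fundamental system $\F_{x_0}(\rho;\Gamma_0)$ with $\Gamma_0=\Gamma(X)\setminus\Gamma_{u,\rho}$; this is a fundamental system of loops for $u$ thanks to Lemma \ref{t0}, because by construction $\rho$ is a $p$-weak upper gradient of $u$ and every $\gamma\in\F_{x_0}(\rho;\Gamma_0)$ is a concatenation of two paths along which $u$ is absolutely continuous.

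The plan for necessity is short: given a lift $h$ with strong value $\hat y_0$ at $x_0$, Lemma \ref{liftgrad} tells us $g_h=g_u$ a.e., so $\rho$ is also a $p$-weak upper gradient of $h$, and after removing an additional exceptional family of zero $p$-modulus we may assume $h$ is absolutely continuous along every $\gamma\in\F_{x_0}(\rho;\Gamma_0)$. Then for any loop $\gamma=\alpha\beta^{-1}\in\F_{x_0}(\rho;\Gamma_0)$, $h\circ\gamma$ is a continuous loop at $\hat y_0$ with $\phi\circ(h\circ\gamma)=u\circ\gamma$, giving $[u\circ\gamma]\in\phi_\sharp\pi(\widehat Y,\hat y_0)$.

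For sufficiency, define $h$ pointwise on the set $X\setminus N$ as follows. Given $x\in X\setminus N$, select a path $\alpha\in\Gamma_{x_0x}\setminus\Gamma_0$; such an $\alpha$ exists because $x_0,x\notin\spt_p\Gamma_0$ together with Lemma \ref{trivial} forces $\Mod_p(\Gamma_{x_0x}\cap\Gamma_0;\mu_{x_0x})=0$, while \eqref{kei} ensures $\Mod_p(\Gamma_{x_0x};\mu_{x_0x})>0$. Since $u\circ\alpha$ is continuous on a compact interval and $\phi$ is a covering by local isometries, $u\circ\alpha$ admits a unique continuous lift $\widetilde{u\circ\alpha}$ starting at $\hat y_0$, obtained by partitioning $[0,1]$ so that the image lies in successive covering neighbourhoods. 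Set $h(x):=\widetilde{u\circ\alpha}(1)$. Well-definedness is precisely where the group condition enters: for a second choice $\beta\in\Gamma_{x_0x}\setminus\Gamma_0$ the loop $\alpha\beta^{-1}$ lies in $\F_{x_0}(\rho;\Gamma_0)$, so by hypothesis $[u\circ(\alpha\beta^{-1})]\in\phi_\sharp\pi(\widehat Y,\hat y_0)$, which by the standard lifting criterion for continuous paths forces the lifts of $u\circ\alpha$ and $u\circ\beta$ starting at $\hat y_0$ to share their endpoint.

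It then remains to show $h\in\Nem pX{\widehat Y}$, $\phi\circ h=u$ $p$-q.e., and $h(x_0)=\hat y_0$ in the strong sense. Clearly $\phi\circ h=u$ on $X\setminus N$ by construction, and $\mu(N)=0=\Cp_p(N)$ by the choice of $\rho$. To check that $g_u$ is a $p$-weak upper gradient of $h$, one restricts attention to curves $\gamma\in\Gamma_{u,g_u}$ with endpoints in $X\setminus N$ (which is a family whose complement has zero $p$-modulus) and concatenates $\gamma$ with a defining path for $h(\gamma(0))$; the lift of $u\circ\gamma$ starting at $h(\gamma(0))$ terminates at $h(\gamma(1))$ by the same well-definedness argument, and the local isometry property of $\phi$ gives $\hat d(h(\gamma(0)),h(\gamma(1)))\le \ell(u\circ\gamma)\le\int_\gamma g_u$, using a subdivision into covering neighbourhoods exactly as in Lemma \ref{liftgrad}. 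For the strong Lebesgue value at $x_0$, note that for $x\in X\setminus N$ we may take $\alpha$ nearly length-minimizing among paths in $\Gamma_{x_0x}\setminus\Gamma_0$ with $\int_\alpha g_u$ controlled; then $\hat d(h(x),\hat y_0)\le\ell(\widetilde{u\circ\alpha})\le\int_\alpha g_u$, so $\hat d(h(\cdot),\hat y_0)\le f_{x_0,g_u}$ and Lemma \ref{lebpoint} gives the required vanishing of the $L^p$-mean.

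The main obstacle I expect is the upper gradient step: one must verify that the lift of $u\circ\gamma$ starting at $h(\gamma(0))$ actually exists as a continuous path and terminates at the prescribed point $h(\gamma(1))$, for $p$-a.e.\ curve $\gamma$ with endpoints in $X\setminus N$. This requires combining Lemma \ref{t0} (continuity of $u$ along such $\gamma$) with the path-concatenation argument underlying well-definedness, and being careful that the exceptional path families accumulated along the way have $p$-modulus zero. Once this is in place, the bound $\hat d(h(\gamma(0)),h(\gamma(1)))\le\int_\gamma g_u$ follows in one line from the local isometry property of $\phi$, and $h\in\Nem pX{\widehat Y}$ is immediate.
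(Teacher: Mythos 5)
Your overall strategy is the paper's: define $h(x)=\widetilde{u\circ\alpha}(1)$ along good paths $\alpha$ from $x_0$, use the subgroup hypothesis for well-definedness, bound $\hat d(h(\cdot),\hat y_0)$ by an $f_{x_0,\cdot}$-type function and invoke Lemma \ref{lebpoint}; conversely, lift $h$ along the two legs of a loop and observe that the lift closes up. However, two steps as written have genuine gaps.

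In the sufficiency direction you fix the system $\F_{x_0}(\rho;\Gamma_0)$ with $\Gamma_0=\Gamma(X)\setminus\Gamma_{u,\rho}$ and then apply the subgroup hypothesis to the loops $\alpha\beta^{-1}$ arising in the well-definedness check. But the hypothesis only asserts $u_\sharp\F_{x_0}(g;\Gamma_0')\le \phi_\sharp\pi(\widehat Y,\hat y_0)$ for \emph{some} fundamental system, whose exceptional family $\Gamma_0'$ need not contain yours; your $\alpha\beta^{-1}$ may well lie in $\Gamma_0'$, in which case the hypothesis says nothing about $[u\circ\alpha\beta^{-1}]$. The paper repairs exactly this by passing to $\Gamma_1=\Gamma_0'\cup(\Gamma(X)\setminus\Gamma_{u,\rho})$, checking $x_0\notin\spt_p\Gamma_1$, and using the monotonicity $\F_{x_0}(g;\Gamma_1)\subset\F_{x_0}(g;\Gamma_0')$.

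In the necessity direction, ``after removing an additional exceptional family of zero $p$-modulus we may assume $h$ is absolutely continuous along every $\gamma\in\F_{x_0}(\rho;\Gamma_0)$'' is not a licit move as stated: every curve of the fundamental system passes through the fixed point $x_0$, and the family of all such curves typically has zero $p$-modulus itself, so discarding a zero-modulus family could in principle empty the system. What protects the system is the weighted condition $x_0\notin\spt_p(\cdot)$ for the enlarged exceptional family, which is not automatic for the family of curves along which $h$ fails to be absolutely continuous. Moreover, even along a good $\alpha$ the continuous representative of $h\circ\alpha$ a priori starts at the pointwise value of whatever representative of $h$ you chose, not at the strong value $\hat y_0$, and the two lifts $f_\alpha,f_\beta$ must be shown to terminate at the same strong value $h(x)$. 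The paper settles all of this with the pointwise estimate $\hat d(h(\alpha(t)),h(\alpha(s)))\le Cd(\alpha(t),\alpha(s))[\rho(\alpha(t))+\rho(\alpha(s))]$ at Lebesgue points along $\alpha$ — which is also the only place where the exponent $q$ and the hypothesis $\rho=(\M g_u^q)^{1/q}\in L^p(\mu)$ enter; your proposal never uses them, which is a symptom of the missing step. The same Haj\l asz-type estimate is what upgrades the pointwise-defined $h$ (defined only off $\{\M\rho_1^p=\infty\}$) to a genuine $\Nem pX{\widehat Y}$ representative via $M^{1,p}$, a point your direct upper-gradient verification also leaves open.
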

\begin{proof}
Suppose $\F_{x_0}(g;\Gamma_0)$ is a fundamental system of loops for $u$ and  \[ u_\sharp(\F_{x_0}(g;\Gamma_0))\leq \phi_\sharp \pi(\hat Y,\hat y_0). \]  Note that by assumption $u(x_0)=\phi(\hat y_0)$. Moreover, $\Gamma_1=\Gamma_0\cup \Gamma(X)\setminus\Gamma_{u,\rho}$ is a path family of zero $p$-modulus for which $$\F_{x_0}(g;\Gamma_1)\subset \F_{x_0}(g;\Gamma_0). $$ Since $x_0\notin \spt_p(\Gamma_0)$ and $x_0\notin \spt_p(\Gamma(X)\setminus\Gamma_{u,\rho})$ it follows that $x_0\notin \spt_p(\Gamma_1)$ and $\F_{x_0}(g;\Gamma_1)$ is a fundamental system of loops for $u$. 

Let $\rho_0\ge 0$ be a $p$-integrable function such that $\displaystyle \int_\gamma\rho_0=\infty$ for all $\gamma\in \Gamma_1$ and $\M \rho_0^p(x_0)<\infty$. Note that $\rho_1:=g+\rho+\rho_0$ is a genuine upper gradient of $u$. Also notice that $\Gamma_1\subset \Gamma_{\rho_1}$.

Set $E=\{ \M\rho_1^p=\infty \}$ and define $h:X\setminus E\to \widehat Y$ by  \[ h(x):=\widehat{u\circ\alpha}(1)\textrm{ for any }\alpha\in \Gamma_{x_0x}\setminus\Gamma_1, \] where $\widehat{u\circ\alpha}$ is the lift of $u\circ\alpha$ starting at $\hat y_0$. Let us show that $h$ is well-defined.

Let  $\alpha,\beta\in \Gamma_{x_0x}\setminus\Gamma_1$. Then $\beta^{-1}\alpha\in \F_{x_0}(g; \Gamma_1)$ and by our assumption $[u\circ\beta^{-1}\alpha]\in \phi_\sharp\pi_1(\hat Y,\hat y_0)$. Thus the lift of $u\circ\beta^{-1}\alpha$ starting at $\hat y_0$ is a loop. In particular it follows that $\widehat{u\circ\alpha}(1)=\widehat{u\circ\beta}(1)$. Therefore $h(x)$ is well defined for $x\notin E$. We also have $\phi(h(x))=\phi(\widehat{u\circ\alpha}(1))=u\circ\alpha(1)=u(x)$ for $x\notin E$.

\bigskip\noindent Next, let us demonstrate that $h$ may be extended to a Newtonian map. For $x,y\in X\setminus E$ take paths $\alpha\in \Gamma_{x_0x}\setminus\Gamma_{\rho_1}$ and $\gamma\in \Gamma_{xy}\setminus\Gamma_{\rho_1}$ (both path families are non-empty by Lemma \ref{trivial}) whence $\gamma\alpha\in \Gamma_{x_0y}\setminus \Gamma_{\rho_1}$. Estimate \[ \hat d(h(x),h(y))=\hat d(\widehat{u\circ\gamma\alpha}(1),\widehat{u\circ\alpha}(1))\le \ell(\widehat{u\circ\gamma})=\ell(u\circ\gamma)\le \int_\gamma\rho_1. \] Taking infimum over all paths $\gamma\in \Gamma_{xy}\setminus\Gamma_{\rho_1}$ we obtain $$\hat d(h(x),h(y))\le f_{x,\rho_1}(y).$$ Since both $x$ and $y$ are Lebesgue points of $f_{x,\rho_1}$ we have, by \cite[Theorem 8.1.55]{HKST07} the inequality
\begin{align*}
&f_{x,\rho_1}(y)=\\
&|f_{x,\rho_1}(y)-f_{x,\rho_1}(x)|\le Cd(x,y)(\M \rho_1^q(x)^{1/q}+\M \rho_1^q(y)^{1/q}).
\end{align*}
Thus $h$ belongs to the \emph{Haj\l asz-Sobolev space} $M^{1,p}(X;\widehat Y)$ and by \cite[Theorem 10.5.3]{HKST07} a representative of $h$ is in $\Nem pX{\widehat Y}$. This is the desired lift of $u$. We abuse notation by denoting it again by $h$.

\bigskip\noindent For each $x\in X\setminus E$ we have $$ \hat d(\hat y_0,h(x))\le f_{x_0,\rho_1}(x) $$ whence by Lemma \ref{lebpoint} $$\lim_{r\to 0}\dashint_{B(x_0,r)}\hat d^p(\hat y_0,h)\ud\mu=0.$$ This concludes the proof of sufficiency.

\vspace{1cm}
\noindent Conversely, suppose $h\in \Nem pX{\hat Y}$ is a lift of $u\in\Nem pXY$ with \[\lim_{r\to 0}\dashint_{B(x_0,r)}\hat d^p(h,\hat y_0)\ud\mu=0. \] Let $\rho_0\ge 0$ be a $p$-integrable Borel function such that $$\int_\gamma\rho_0=\infty$$ for all $\gamma\in \Gamma(X)\setminus\Gamma_{u,\rho}$ and $\M \rho_0^p(x_0)<\infty$. Set $$\Gamma_0=\Gamma_{\rho+\rho_0}.$$ Notice again that $\rho+\rho_0$ is a genuine upper gradient of $u$ and in particular $\rho+\rho_0$ is an upper gradient for $u$ along any path $\gamma\in \Gamma(X)\setminus \Gamma_0$. By Lemma \ref{t0} the family $$\F:=\F_{x_0}(\rho+\rho_0;\Gamma_0)$$ is a fundamental system of loops for $u$ with basepoint $x_0$.

Let us show that $[u\circ\beta^{-1}\alpha]\in \phi_\sharp\pi(\widehat Y,\hat y_0)$ for any $\alpha,\beta\in \Gamma(X)\setminus \Gamma_{0}$ with $$\alpha(1)=\beta(1):=x\notin \{ \M(\rho+\rho_0)^p=\infty \}.$$ It suffices to show that the lift of the loop $u\circ\beta^{-1}\alpha$ to a path  in $\widehat Y$ starting at $\hat y_0$ is a loop.

Consider the paths $h\circ\alpha$ and $h\circ\beta$. Since $\int_\alpha\rho<\infty$ and $\int_\beta\rho<\infty$ it follows that $\rho(\alpha(t)),\rho(\beta(t))<\infty$ for almost every $t$, and by Lemma \ref{leb} $\alpha(t)$ and $\beta(t)$ are Lebesgue points both for $u$ and $h$, respectively, for a.e. $t$. By \cite[Theorem 8.1.55]{HKST07} we have \[ \hat d(h(\alpha(t)),h(\alpha(s)))\le Cd(\alpha(t),\alpha(s))[\rho(\alpha(t))+\rho(\alpha(s))]\textrm{ for a.e. }  t,s \in [0,1]\]  Consequently $h\circ\alpha$ agrees almost everywhere with an absolutely continuous path $f_\alpha$ joining the points $\hat y_0$ and $h(\alpha(1))=h(x)$. Similarly we have that $h\circ\beta$ agrees almost everywhere with an absolutely continuous path $f_\beta$ (joining $\hat y_0$ and $h(x)$).

On the other hand the equality $\phi\circ h(x)=u(x)$ holds for Lebesgue points $x$ of $h$. Therefore $\phi(h(\alpha(t)))=u(\alpha(t))$ and $\phi(h(\beta(t)))=u(\beta(t))$ for a.e. $t$. It follows that $\phi\circ h \circ \beta^{-1}\alpha=u\circ\beta^{-1}\alpha$ almost everywhere and consequently $f_\beta^{-1}f_\alpha$ is the lift of $u\circ\beta^{-1}\alpha$. However, $f_\beta^{-1}f_\alpha$ is a loop which is what we wanted to demonstrate.

\bigskip\noindent We have shown that $[u\circ\gamma]\in \phi_\sharp\pi(\widehat Y,\hat y_0)$ for every $\gamma\in \F$. Thus $$u_\sharp(\F)\le \phi_\sharp\pi(\widehat Y,\hat y_0)$$ and the claim is proven.
\end{proof}

\section{Stability}

For the purposes of this section we assume throughout that $X$ is a compact doubling $p$-Poincar\'e space and $Y$ a separable complete length space together with a covering map $\phi:\widehat Y\to Y$ that is a local isometry, unless otherwise stated in the claim.

\subsection{The subconjugacy property}

\begin{definition}\label{subconj}
Let $G$ be a group. A subgroup $H\subset G$ is said to satisfy the \emph{subconjugacy condition} if, for any sequence $(g_n)\subset G$, there exists some $g\in G$ so that the following condition is satisfied.
\begin{equation}\label{eq: subconj}
\liminf_{n\to \infty}H^{g_n}:=\bigcup_{k\ge 1}\bigcap_{n\ge k}H^{g_n}\le H^g.
\end{equation}
Here $H^g=g^{-1}Hg=\{g^{-1}hg: h\in H\}$.
\end{definition}

\noindent A subgroup satisfying the subconjugacy condition may also be referred to as \emph{subconjugate obedient}. Clearly any conjugate $H^g$ of a subconjugate obedient subgroup is again subconjugate obedient.

\begin{lemma}\label{noeth}
Any subgroup $H$ of a Noetherian group $G$ is \emph{subconjugate obedient}. More generally if the limit infimum of the sequence $H^{g_j}$, $$H_\infty:=\bigcup_{n\ge 1}\bigcap_{j\ge n}H^{g_j},$$ is finitely generated, then condition (\ref{eq: subconj}) is satisfied for some $g\in G$.
\begin{proof}
It suffices to prove the latter claim. Let $G,H$ and $(g_n)$ be as in Definition \ref{subconj} and let $S\subset H_\infty$ be a finite generating set for $H_\infty$. Then there exists $n_0$ so that $$h\in \bigcap_{j\ge n_0}H^{g_j}$$ for all $h\in S$. Thus $$S\subset \bigcap_{j\ge n_0}H^{g_j}\le H^{g_{n_0}}$$ and the claim follows.
\end{proof}
\end{lemma}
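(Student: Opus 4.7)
The plan is to prove the second (more general) assertion first and then deduce the Noetherian case as an immediate corollary. The key preliminary observation is that the sets $K_n := \bigcap_{j\ge n} H^{g_j}$ form an \emph{ascending} chain of subgroups of $G$, since taking the intersection over a smaller index set can only enlarge the result. In particular $H_\infty = \bigcup_{n\ge 1} K_n$ is a directed union of subgroups and is therefore itself a subgroup of $G$, which justifies speaking of its generating sets at all.

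For the general statement I would take a finite generating set $S = \{s_1,\dots, s_k\}$ of $H_\infty$ and argue that $S$ already sits inside a single $K_N$: each $s_i$ lies in some $K_{n_i}$ by definition of the union, and setting $N = \max\{n_1,\dots, n_k\}$ the monotonicity of the chain gives $s_i \in K_N$ for every $i$. Since $K_N$ is a subgroup containing $S$, it contains $\langle S\rangle = H_\infty$, and $K_N \le H^{g_N}$ holds by the very definition of $K_N$ as an intersection including the factor at index $N$. Hence the element $g := g_N$ witnesses \eqref{eq: subconj}.

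The Noetherian case is then immediate, since in a Noetherian group every subgroup is finitely generated; in particular $H_\infty$ (being a subgroup of $G$) is finitely generated and the previous argument applies. Alternatively one can argue directly: the ascending chain $K_1 \le K_2 \le \cdots$ must stabilize by the ACC, so $H_\infty$ actually equals $K_{n_0}$ for some $n_0$, and then $H_\infty = K_{n_0} \le H^{g_{n_0}}$ trivially.

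The argument is essentially a bookkeeping exercise and I do not anticipate any serious obstacle. The only step requiring any care is noticing that $(K_n)$ is an \emph{ascending} rather than descending chain, which is precisely what makes $H_\infty$ into a group and forces any finite generating set to fit into one common $K_N$; without this monotonicity one would need to invoke a diagonal or compactness argument to merge the indices, and no such argument is needed here.
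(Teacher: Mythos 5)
Your argument is correct and is essentially the paper's proof: both fit a finite generating set of $H_\infty$ into a single $K_N=\bigcap_{j\ge N}H^{g_j}$ and then use $K_N\le H^{g_N}$ to conclude with $g=g_N$. The extra remarks you add (that $(K_n)$ is ascending, hence $H_\infty$ is a subgroup, and the direct ACC argument in the Noetherian case) are sound but do not change the route.
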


\begin{lemma}\label{hyptor}
Suppose $G$ is a torsion free hyperbolic group. Then $diag(G)$ has the subconjugacy property with respect to $G\times G$.
\begin{proof}
We shall denote by $C(S)$ the \emph{centralizer} of a set $S\subset G$ in $G$, i.e. $$C(S)=\{g\in G: gs=sg\textrm{ for all }s\in S \}.$$ If $S=G$ we write $C(S)=:Z(G)$ for the center $Z(G)$ of the group $G$.

Let $\{(g^1_j,g^2_j)\}_j\subset G\times G$ be a sequence and denote $H=\diag(G)$. Note that $$ H^{(g^1_j,g^2_j)}=H^{(1,(g^1_j)^{-1}g^2_j)}=:H^{(1,h_j)} $$ so we may consider sequences of the form $(1,h_j)$. For each $$H_n:=\bigcap_{j\ge n}H^{(1,h_j)}=\diag[C(h_ih_j^{-1}:i,j\ge n)]^{(1,h_n)}.$$ In fact, denoting $I_n=\{h_ih_j^{-1}:i,j\ge n \}$ we have $$\diag(C(I_n))^{(1,h_n)}=\diag(C(I_n))^{(1,h_k)}\textrm{ for any }k\ge n.$$ The sequence $H_n$ is ascending, as is the sequence $C_n=\diag(C(I_n))$.

If, for some $n_0$ the inclusion $I_{n_0}\subset Z(G)$ holds, then $C_{n_0}=\diag(G)=C_n$, $n\ge n_0$, and $$H_n=C_n^{(1,h_n)}=\diag(G)^{(1,h_{n_0})}\textrm{ for all }n\ge n_0,$$ implying $$H_\infty =\bigcup_{n\ge 1}H_n\le H^{(1,h_{n_0})}.$$

\bigskip\noindent We may therefore assume that for each $n$ there exists some $h_ih_j^{-1}\notin Z(G)$, $i,j>n$. We claim this implies that the groups $C_n$ are virtually cyclic. Indeed, since $h_ih_j^{-1}$ has infinite order the centralizer $C(h_ih_j^{-1})$ is virtually $\Z$ (Proposition \ref{zed}) and since $$C(I_n)\le C(h_ih_j^{-1})$$ the claim follows.

\bigskip\noindent Thus we have obtained an ascending sequence $C(I_n)\le C(I_{n+1})$ of virtually abelian groups which, by Lemma \ref{ACC} stabilizes. Let $n_0$ be such that $C_n=C_{n_0}$ for all $n\ge n_0$. Then also $$H_n=C_n^{(1,h_n)}=C_{n_0}^{(1,h_n)}=C_{n_0}^{(1,h_{n_0})}=H_{n_0},\ n\ge n_0.$$ This finishes the proof of the theorem.
\end{proof}
\end{lemma}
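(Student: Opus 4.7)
The plan is to reduce the general case to sequences of a single conjugating parameter, then compute the intersection defining $\liminf_n \diag(G)^{g_n}$ explicitly in terms of centralizers, and finally use the structure theorems for centralizers in torsion-free hyperbolic groups (Proposition \ref{zed}) together with the ascending chain condition for virtually abelian subgroups (Lemma \ref{ACC}) to force stabilization.

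First, since $\diag(G)^{(a,b)} = \diag(G)^{(1,a^{-1}b)}$, I can replace any sequence $(g_j^1, g_j^2)$ by a sequence of the form $(1, h_j)$ with $h_j = (g_j^1)^{-1} g_j^2$. For such a sequence set $H = \diag(G)$ and compute $H_n := \bigcap_{j \ge n} H^{(1,h_j)}$. An element $(x,x)^{(1,h_j)} = (x, h_j^{-1} x h_j)$ lies in $H^{(1,h_k)}$ for all $k \ge n$ iff $h_j^{-1} x h_j = h_k^{-1} x h_k$ for all $j,k \ge n$, iff $x$ commutes with every $h_j h_k^{-1}$. Hence, writing $I_n = \{h_i h_j^{-1} : i,j \ge n\}$,
\[
H_n = \diag\bigl(C(I_n)\bigr)^{(1,h_n)},
\]
and the sequence $C(I_n)$ is ascending in $n$.

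Next I would split into two cases. If $I_{n_0} \subset Z(G)$ for some $n_0$, then $C(I_n) = G$ for all $n \ge n_0$, and one checks that $\diag(G)^{(1,h_n)} = \diag(G)^{(1,h_{n_0})}$ for $n \ge n_0$ (since modifying the second coordinate by a central element does not change the conjugate of the diagonal), so $H_\infty \le H^{(1,h_{n_0})}$. Otherwise, for every $n$ the set $I_n$ contains an element $h_i h_j^{-1} \notin Z(G)$, which in a torsion-free hyperbolic group is of infinite order. By Proposition \ref{zed} the centralizer $C(h_i h_j^{-1})$ is virtually cyclic, and since $C(I_n) \le C(h_i h_j^{-1})$, every $C(I_n)$ is virtually cyclic, in particular virtually abelian.

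Finally, the ascending chain $C(I_n) \le C(I_{n+1}) \le \cdots$ of virtually abelian subgroups of the hyperbolic group $G$ must stabilize by Lemma \ref{ACC}, say at index $n_0$. Then for $n \ge n_0$ one has $H_n = \diag(C(I_{n_0}))^{(1,h_n)}$, and since $C(I_{n_0})$ centralizes all $h_i h_j^{-1}$ with $i,j \ge n_0$, the conjugation by $(1,h_n)$ gives the same subgroup as conjugation by $(1,h_{n_0})$ for every $n \ge n_0$, so $H_n = H_{n_0}$ and thus $H_\infty = H_{n_0} \le H^{(1,h_{n_0})}$, verifying (\ref{eq: subconj}) with $g = (1,h_{n_0})$. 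The main obstacle will be handling the coordinate $h_n$ correctly in each case, i.e.\ verifying that the outer conjugation by $(1,h_n)$ really is constant once $C(I_n)$ stabilizes; this boils down to the fact that $h_n h_{n_0}^{-1}$ commutes with every element of $C(I_{n_0})$ for $n \ge n_0$, which is exactly what the definition of $C(I_{n_0})$ provides.
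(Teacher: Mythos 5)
Your proposal is correct and follows essentially the same route as the paper's proof: the same reduction to sequences $(1,h_j)$, the same identification $H_n=\diag(C(I_n))^{(1,h_n)}$, the same case split on whether some $I_{n_0}$ is central, and the same appeal to Proposition \ref{zed} and Lemma \ref{ACC} to stabilize the ascending chain of virtually cyclic centralizers. The final point you flag as the "main obstacle" — that conjugation by $(1,h_n)$ is constant because $h_nh_{n_0}^{-1}$ commutes with $C(I_{n_0})$ — is exactly the observation the paper records as $\diag(C(I_n))^{(1,h_n)}=\diag(C(I_n))^{(1,h_k)}$ for $k\ge n$.
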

The proof crucially rests on the ACC for the subgroups $C(I_n)\le C(I_{n+1})$ and would work equally well for groups that satisfy the ACC for virtually abelian subgroups and for which, in addition, $C(g)$ is virtually abelian for any $g\notin Z(G)$. 

However the fundamental group of a nonpositively curved space need not have $C(g)$ virtually abelian as can easily be seen by considering the direct product of two free groups.

\subsection{Stability under convergence}

This subsection is devoted to the proof of Theorem \ref{stab}. We reduce Theorem \ref{stab} to the proof of the following statement. 
\begin{proposition}\label{stabi}
Suppose $\pi(Y)$ is countable and $\phi_\sharp\pi(\hat Y)\le \pi(Y)$ satisfies the subconjugacy condition. Let $(u_j)\subset \Nem pXY$ be a sequence with lifts $h_j\in \Nem pX{\hat Y}$ and \[ \sup_j \int_Xg_{u_j}^p\ud\mu<\infty. \] Suppose $(u_j)$ converges in $L^p(X;Y)$ to a map $u\in \Nem pXY$. Then $u$ admits a lift $h\in \Nem qX{\hat Y}$, where $1\le q<p$ is such that $(X,d,\mu)$ supports a weak $(p,q)$-Poincar\'e inequality.
\end{proposition}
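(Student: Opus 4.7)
The plan is to translate the lift problem into an algebraic condition via Theorem \ref{liftcond}, exploit the subconjugacy property to produce a limit subgroup from the sequence, and then apply Theorem \ref{liftcond} in the opposite direction to $u$. First I would pass to a subsequence along which $u_j\to u$ pointwise $\mu$-a.e.\ and $g_{u_j}$ converges weakly in $L^p$, so that Theorem \ref{rellich} gives $u\in\Nem pXY$ with $g_u\in L^p(\mu)$. With $q<p$ such that $X$ supports a weak $(1,q)$-Poincar\'e inequality, the function $\rho:=(\M g_u^q)^{1/q}$ lies in $L^p(\mu)$ by the maximal inequality, making the hypotheses of Theorem \ref{liftcond} available for $u$. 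Next I would fix a base point $x_0\in X$ outside a $\mu$-null set so that $\rho(x_0)<\infty$, $x_0$ is a strong Lebesgue point of $u$, of each $u_j$, and of each $h_j$, $u_j(x_0)\to y_0:=u(x_0)$, and $x_0$ avoids the $p$-supports of all relevant null path families. Applying Theorem \ref{liftcond} to each $u_j$ at $x_0$ with value $h_j(x_0)$ then provides a fundamental system $\F^{j}$ of loops for $u_j$ with $(u_j)_\sharp\F^{j}\le\phi_\sharp\pi(\hat Y, h_j(x_0))$.

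The next step is to transport these subgroups, which sit in varying fundamental groups $\pi(Y, u_j(x_0))$, into conjugates of a single subgroup of $G:=\pi(Y,y_0)$. Fix $\hat y_0\in\phi^{-1}(y_0)$ and set $H:=\phi_\sharp\pi(\hat Y,\hat y_0)$. For $j$ large, $u_j(x_0)$ lies in a covering neighborhood of $y_0$, so there is a short path $\sigma_j\colon y_0\to u_j(x_0)$ lifting to $\tilde\sigma_j$ starting at $\hat y_0$ and ending at some $\tilde y_j\in\phi^{-1}(u_j(x_0))$; connect $\tilde y_j$ to $h_j(x_0)$ by a path $\tilde\tau_j$ in $\hat Y$ and set $\tau_j:=\phi\circ\tilde\tau_j$. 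A direct computation with standard covering-space change-of-basepoint formulas shows that the conjugation map $[\alpha]\mapsto[\sigma_j\alpha\sigma_j^{-1}]\colon\pi(Y,u_j(x_0))\to G$ carries $\phi_\sharp\pi(\hat Y,h_j(x_0))$ to $H^{g_j}$, where $g_j:=[\sigma_j\tau_j\sigma_j^{-1}]\in G$. Applying the subconjugacy assumption to $(g_j)$ yields $g\in G$ with $\bigcup_{k\ge 1}\bigcap_{j\ge k}H^{g_j}\le H^g$; lifting a loop representing $g$ from $\hat y_0$ selects a point $\hat y^\star\in\phi^{-1}(y_0)$ with $\phi_\sharp\pi(\hat Y,\hat y^\star)=H^g$, which will serve as the prescribed value of the lift of $u$ at $x_0$.

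It remains to produce a fundamental system $\F$ of loops for $u$ with $u_\sharp\F\le H^g$; Theorem \ref{liftcond} will then deliver the lift in $\Nem qX{\hat Y}$. Given $\gamma\in\F$, the containment $[u_j\circ\gamma]\in\phi_\sharp\pi(\hat Y, h_j(x_0))$ supplied by each $u_j$ translates under $\sigma_j$-conjugation to $[\sigma_j(u_j\circ\gamma)\sigma_j^{-1}]\in H^{g_j}$. The crux is then the identity
\[
[u\circ\gamma]\;=\;[\sigma_j(u_j\circ\gamma)\sigma_j^{-1}] \quad \text{in}\ G
\]
for all sufficiently large $j$; once established, this places $[u\circ\gamma]$ in $\bigcap_{j\ge J}H^{g_j}\le H^g$. \textbf{The main obstacle} is that $L^p$-convergence $u_j\to u$ does not automatically yield pointwise, let alone uniform, convergence along a prescribed curve. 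I expect to handle this by combining Fuglede-type arguments with the weak $L^p$-convergence $g_{u_j}\rightharpoonup g_u$ and Arzel\`a--Ascoli applied to the equicontinuous families $(u_j\circ\gamma)_j$ (whose moduli of continuity are controlled by $\int_\gamma g_{u_j}$), passing to a further subsequence along which $u_j\circ\gamma\to u\circ\gamma$ uniformly for $p$-a.e.\ $\gamma$; the exceptional path family defining $\F$ is enlarged to absorb every null family arising in these extractions, so that every $\gamma\in\F$ enjoys uniform convergence. Given this, together with $\sigma_j\to y_0$, a standard partition-by-covering-neighborhoods argument produces the required homotopy of loops at $y_0$, completing the proof.
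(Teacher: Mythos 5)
Your overall strategy is exactly the paper's: apply Theorem \ref{liftcond} to each $u_j$ to obtain induced subgroups inside $\phi_\sharp\pi(\widehat Y,h_j(x_0))$, transport these to conjugates $H^{g_j}$ of a fixed $H$ via short change-of-basepoint paths, invoke subconjugacy to get a limit subgroup $H^g$, and feed the resulting inclusion back into Theorem \ref{liftcond} for $u$. Steps corresponding to the paper's Steps 2 and 3 (the paths $\alpha_j$, the lifted endpoints, and the conjugation bookkeeping) are correctly set up in your proposal.

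There is, however, a genuine gap at the point you yourself flag as the main obstacle. You propose to pass to a single further subsequence along which $u_j\circ\gamma\to u\circ\gamma$ \emph{uniformly for $p$-a.e.\ curve} $\gamma$. This is not achievable as stated. The equicontinuity of the family $(u_j\circ\gamma)_j$ that Arzel\`a--Ascoli requires comes from a bound of the form $\sup_j\int_{\gamma}g_{u_j}^{p/q}<\infty$ (then H\"older controls $\int_{\gamma|_{[a,b]}}g_{u_j}$ in terms of $|b-a|$). But weak convergence of $g_{u_j}^{p/q}$ in $L^q$, combined with Mazur's lemma and Fuglede, only yields $\liminf_j\int_\gamma g_{u_j}^{p/q}<\infty$ for $q$-a.e.\ $\gamma$; upgrading this liminf to a sup requires extracting a subsequence \emph{depending on $\gamma$}, and a diagonal argument can reconcile these extractions only for countably many curves. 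Moreover you cannot settle for a $\gamma$-dependent sub-subsequence: to place $[u\circ\gamma]$ in $\liminf_n H^{g_n}=\bigcup_k\bigcap_{n\ge k}H^{g_n}$ you need $[u\circ\gamma]\in H^{g_j}$ for \emph{all} large $j$ along the one fixed subsequence, not just for infinitely many $j$. This is precisely where the hypothesis that $\pi(Y)$ is countable -- which your proposal never uses -- enters the paper's proof: countability makes $u_\sharp\F$ generated by countably many loops $u\circ\gamma^1,u\circ\gamma^2,\dots$, and the paper's Lemma \ref{t7'} then runs the weighted-sum trick $\sum_k a_k\int_{\gamma^k}g_j^{p/q}$ together with constant-speed reparametrizations and Arzel\`a--Ascoli to obtain, along one subsequence, uniform convergence $u_j\circ\gamma^k\to u\circ\gamma^k$ simultaneously for all $k$. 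It suffices to verify the membership $[u\circ\gamma^k]\in H^{g_j}$ for these generators. With this reduction inserted, your argument closes; without it, the extraction step fails.
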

\begin{proof}[Proof of Theorem \ref{stab}] Assume Proposition \ref{stabi}.
Let $u_j,h_j$ and $u$ be as in the claim and let $h\in \Nem qX{\hat Y}$ be a lift of $u\in \Nem pXY\subset \Nem qXY$ provided by Proposition \ref{stabi}. Since $u$ has a $p$-integrable upper gradient $g$, Lemma \ref{liftgrad} implies that $g$ is a $q$-weak upper gradient for $h$. It follows \cite[Theorem 8.1.55]{HKST07} that the pair $(h,g)$ satisfies the inequality \[ \hat d(h(x),h(y))\le Cd(x,y)[\M g^q(x)^{1/q}+\M g^q(y)^{1/q}] \] almost everywhere. The function $x\mapsto \M g^q(x)^{1/q}$ is $p$-integrable, so by \cite[Theorem 10.5.3]{HKST07} a representative of $h$ belongs to $\Nem pXY$. Clearly, this representative is a lift of $u$.
\end{proof}

\noindent Before going to the proof of Proposition \ref{stabi} let us present an auxiliary lemma.

\begin{lemma}\label{t7'}
Let $1\le q<p$. Suppose $u,u_j\in\Nem pXY$ for $j=1,2,\ldots$, and suppose $g_\infty,g_j\in L^p(\mu)$, $j=1,2,\ldots$, are nonnegative Borel functions. Let $\{\gamma^k\}_k$ be a countable collection of rectifiable paths in $X$ for which
\begin{itemize}
\item[(1)] $g_j$ is an upper gradient of $u_j$, and $g_\infty$ of $u$, along $\gamma^k$ for all $k$;
\item[(2)] $$\int_{\gamma^k}d(u,u_j)\to 0$$ as $j\to \infty$ for all $k$, and
\item[(3)] there are convex combinations $$\tilde g_j=\sum_{m\ge 0}\lambda^j_mg_m^{p/q}$$ so that $$\int_{\gamma^k}|\tilde g_j-g_\infty^{p/q}|\to 0$$ as $j\to \infty$ for all $k$.
\end{itemize}
Then there is a subsequence $(j_l)$ so that for all $k$ the paths $f_{j_l}^k=u_{j_l}\circ\gamma^k$ converge uniformly to $f^k=u\circ\gamma^k$ as $l\to \infty$.
\end{lemma}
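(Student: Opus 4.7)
The plan is to reparametrize each $\gamma^k$ by arclength, extract a suitable subsequence via a Cantor diagonal argument over $k$, and then apply the Arzelà–Ascoli theorem to each curve separately.

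First I would fix an arclength parametrization $\gamma^k\colon [0,\ell_k]\to X$, so that condition (1) translates into the upper-gradient inequality
$$d(f^k_j(s),f^k_j(t))\le \int_s^t g_j(\gamma^k(r))\,dr$$
for the paths $f^k_j=u_j\circ\gamma^k$, and likewise for $f^k=u\circ\gamma^k$. Using conditions (2) and (3) combined with a diagonal extraction over $k$, pass to a subsequence along which, for every $k$ simultaneously, both $d(f^k_j,f^k)$ and $|\tilde g_j\circ\gamma^k-g_\infty^{p/q}\circ\gamma^k|$ converge to zero in $L^1([0,\ell_k])$ with summable errors, hence also pointwise almost everywhere.

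The crux is to produce equicontinuity of $\{f^k_{j_l}\}_l$ along a further subsequence. Since $q/p\le 1$, concavity of $t\mapsto t^{q/p}$ gives the pointwise Jensen bound
$$\sum_m \lambda^j_m\, g_m = \sum_m \lambda^j_m\,(g_m^{p/q})^{q/p} \le \Bigl(\sum_m \lambda^j_m\, g_m^{p/q}\Bigr)^{q/p}=\tilde g_j^{q/p},$$
so convex combinations of the $g_m$'s are controlled pointwise by $\tilde g_j^{q/p}$. The $L^1$ convergence of $\tilde g_j\circ\gamma^k$, together with Hölder's inequality
$$\int_I g_m\circ\gamma^k \le |I|^{1-q/p}\Bigl(\int_I (g_m\circ\gamma^k)^{p/q}\Bigr)^{q/p},$$
allows one to transfer equi-integrability of the convex combinations back to a carefully chosen subsequence $\{g_{j_l}\circ\gamma^k\}_l$ of the originals, by selecting $j_l$ as an index of sufficient weight in the convex combinations $\tilde g_{n(l)}$ for suitable $n(l)\to\infty$.

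With equi-integrability of $\{g_{j_l}\circ\gamma^k\}_l$ in hand, the upper-gradient inequality promotes it to equicontinuity of $\{f^k_{j_l}\}_l$ on $[0,\ell_k]$. Arzelà–Ascoli then extracts a further uniformly convergent subsequence, whose uniform limit must equal $f^k$ by the pointwise almost-everywhere identification inherited from the diagonal step. A final diagonal across $k$ yields a single subsequence that works for all $k$ simultaneously. The principal obstacle is the equi-integrability transfer: condition (3) directly controls only convex combinations, so the Jensen/Hölder chain must be unwound carefully to reach the individual members of the sequence.
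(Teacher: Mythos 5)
Your overall architecture is the same as the paper's: obtain equicontinuity of the curves $f^k_j$ from the upper gradient inequality together with a uniform bound on $\int_{\gamma^k}g_j^{p/q}$ via H\"older, apply Arzel\`a--Ascoli, and identify the uniform limit with $f^k$ through the a.e.\ convergence supplied by (2). Your shortcut of applying Arzel\`a--Ascoli directly to the arclength-parametrized curves is legitimate and even bypasses the paper's detour through constant-speed reparametrizations and the length functions $\ell^k_j$; the a.e.\ convergence along the subsequence also supplies the pointwise precompactness that Arzel\`a--Ascoli needs.

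The gap is exactly the step you flag as ``the principal obstacle,'' and the mechanism you sketch does not close it. What is needed is a \emph{single} subsequence $(j_l)$ with $\sup_l\int_{\gamma^k}g_{j_l}^{p/q}<\infty$ for \emph{every} $k$ simultaneously. ``Selecting $j_l$ as an index of sufficient weight in $\tilde g_{n(l)}$'' fails on three counts: a convex combination need not contain any index whose weight is bounded below uniformly in $l$; even when $\lambda^{n(l)}_m\ge\delta_l$ the resulting bound $\int_{\gamma^k}g_m^{p/q}\le\delta_l^{-1}\int_{\gamma^k}\tilde g_{n(l)}$ degenerates as $\delta_l\to0$; and, decisively, the chosen index depends on $k$, while a ``final diagonal across $k$'' cannot repair this, because the finiteness of $\liminf_j\int_{\gamma^k}g_j^{p/q}$ is only known along the full sequence, not along a subsequence already extracted for the previous curves. (Your Jensen step points the wrong way for the same reason: it controls convex combinations of the $g_m$, and equi-integrability of convex combinations does not pass to any subsequence of the individual terms.) The correct device is a weighted summation over $k$ \emph{before} extracting: with $M_k:=\sup_j\int_{\gamma^k}\tilde g_j<\infty$ and $a_k:=2^{-k}(1+M_k)^{-1}$ one has
\begin{equation*}
\liminf_{j\to\infty}\sum_{k\ge1}a_k\int_{\gamma^k}g_j^{p/q}\ \le\ \lim_{j\to\infty}\sum_{k\ge1}a_k\int_{\gamma^k}\tilde g_j\ =\ \sum_{k\ge1}a_k\int_{\gamma^k}g_\infty^{p/q}\ \le\ \sum_{k\ge1}2^{-k}<\infty,
\end{equation*}
since the infimum of the nonnegative quantities $\sum_k a_k\int_{\gamma^k}g_n^{p/q}$ over the (tail) support of $\lambda^j$ is dominated by the corresponding convex combination, and the $j$-limit passes inside the $k$-sum by domination. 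A subsequence realizing this liminf then satisfies $\int_{\gamma^k}g_{j_l}^{p/q}\le C a_k^{-1}$ for all $k$ and $l$ at once, which is the uniform $L^{p/q}$ bound your H\"older estimate requires. With that single insertion the remainder of your argument goes through.
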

\begin{proof}
For each $j$ and $k$ let $\tilde f^k _j:[0,1]\to Y$ be the constant speed parametrization of $f^k_j$, and $$ \ell^k_j(t)=\ell(f^k_j|_{[0,t]}), $$ so that $$ \tilde f^k_j(\ell^k_j(t)/\ell(f^k_j))=f^k_j(t). $$ Note that for each $j,k$ the path $\tilde f^k_k$ is $\ell(f^k_j)$-Lipschitz.

Set $$ a_k=\frac{2^{-k}}{1+M_k},\textrm{ where }M_k:=\sup_j\int_{\gamma^k}\tilde g_j <\infty $$ by (3). Using the same condition (3) we may estimate
\begin{align*}
\liminf_{j\to \infty} \sum_{k\ge 1} a_k\int_{\gamma^k}g_j^{p/q}&=\lim_{j\to \infty}\inf_{n\ge j}\sum_{k\ge 1} a_k\int_{\gamma^k}g_n^{p/q} \le \lim_{j\to \infty} \sum_{n\ge j}\lambda^j_n \sum_{k\ge 1} a_k\int_{\gamma^k}g_n^{p/q}\\
&=\lim_{j\to\infty} \sum_{k\ge 1} a_k\int_{\gamma^k}\tilde g_j=\sum_{k\ge 1} a_k\int_{\gamma^k}g_\infty^{p/q}<\infty.
\end{align*}
It follows that there is a subsequence (still denoted by the indices $j$) so that for each $k$ we have the uniform upper bound in $j$: \[ \int_{\gamma^k}g_j^{p/q}\le a_k^{-1}\sum_{m\ge 1} a_m\int_{\gamma^m}g_\infty^{p/q}=:C_k<\infty\textrm{ for all }j. \]

\bigskip\noindent Notice that \[ \ell(f^k_j)\le \int_{\gamma^k}g_j \le \ell(\gamma^k)^{1-q/p}\left( \int_{\gamma^k}g_j^{p/q} \right)^{q/p}\le \ell(\gamma^k)^{1-q/p}C_k^{q/p}.  \] Thus the maps $\tilde f^k_j$ are uniformly Lipschitz. Using this and a diagonal argument we may pass to a subsequence with $\tilde f^k_j$ converging uniformly to some map $\tilde f^k$ as $j\to \infty$, for all $k$.

Notice also that
\begin{align*}
|\ell^k_j(b)-\ell^k_j(a)|=\ell(f^k_j|_{[a,b]})&\le \ell(\gamma^k)\int_a^bg_j\circ\gamma^k \ud \tau\\
&\le (\ell(\gamma^k)|b-a|)^{1-q/p}\left( \int_{\gamma^k}g_j^{p/q} \right)^{q/p}
\end{align*}
for all $a,b\in [0,1]$. Thus the family $(\ell^k_j)_j$ is equicontinuous and we may again use the Arzela-Ascoli Theorem \cite[Theorem 10.28]{hei01} to pass to a subsequence for which $\ell^k_j$ converges uniformly to a function $\ell^k$ as $j\to \infty$, for all $k$.

\bigskip\noindent We have now passed to a subsequence, which we denote henceforth with the indices $j_l$ for which $\tilde f^k_{j_l}$ and $\ell^k_{j_l}$ both converge uniformly to $\tilde f^k$ and $\ell^k$ as $l\to \infty$, for all $k$.

Suppose $\ell^k\ne 0$. Then $$f_{j_l}^k=\tilde f^k_{j_l}\circ(\ell^k_{j_l}/\ell(f^k_{j_l}))\to \tilde f^k\circ(\ell^k/\ell^k(1))$$ uniformly as $l\to \infty$. On the other hand by (2) the sequence $f_j^k$ has $L^1([0,1])$-limit $f^k$ so $$ f^k=\tilde f^k\circ(\ell^k/\ell^k(1))\textrm{ a.e.}.$$ Since both sides are continuous we conclude that equality holds everywhere.

If $\ell^k\equiv 0$ then $\ell(f)\le \liminf_{l\to\infty}\ell^{k}_{j_l}(1)=0$ and in fact $\tilde f^k_{j_l}\to const.=c_k$ uniformly as $l\to \infty$, from which it follows that $f^k_{j_l}=\tilde f^k_{j_l}\circ(\ell^k_{j_l}/\ell(f^k_{j_l}))\to c_k$ uniformly. Again by (2) we have $f^k\equiv c_k$ for each $k$.

\end{proof}

\begin{proof}[Proof of Proposition \ref{stabi}]
We proceed in three steps.
\subsubsection*{Step 1} Choosing a fundamental system of loops for $u$. 

\bigskip\noindent The sequence $\displaystyle (g_{u_j}^{p/q})_j$ is bounded in $L^q(\mu)$. We may pass to a subsequence so that 
\begin{itemize}
\item[(1)] $u_j\to u$ pointwise almost everywhere,
\item[(2)] $g_{u_j}^{p/q}$ converges weakly in $L^q(\mu)$ to a function $g=g_\infty^{p/q}\in L^q(\mu)$ and a convex combination $\displaystyle \tilde g_j= \sum_{n\ge j}\lambda_n^jg_{u_n}^{p/q}$ converges to $g$ in $L^q$-norm,
\item[(4)] $\int_\gamma d(u_j,u)\to 0$ as $j\to \infty$ for $q$-almost every curve $\gamma$ in $X$, and 
\item[(5)] $\displaystyle \lim_{j\to \infty}\int_\gamma |\tilde g_j-g| =0$ for $q$-almost every curve $\gamma$ in $X$.
\end{itemize}
The function $g_\infty$ is a $q$-weak upper gradient for $u$, see \cite[Lemma 3.1]{kal01}.

Denote by $\Gamma_0'$ the $q$-exceptional path family to (4) and (5). Let
\begin{align*}
\rho_j&=(\M g_{u_j}^q)^{1/q}, N_j=\{\M \rho_{j}^q=\infty \},\\
\rho_\infty &=(\M g_\infty^q)^{1/q}\textrm{ and } N_\infty=\{ \M\rho_\infty^q=\infty\}.
\end{align*}

Set $$N':=(N_\infty\cup \spt_q(\Gamma(X)\setminus \Gamma_{u,\rho_\infty}))\cup\bigcup_{j=1}^\infty (N_j\cup \spt_q(\Gamma(X)\setminus \Gamma_{u_j,\rho_j}))$$ and $$N=\spt_q(\Gamma_0')\cup\spt_q(\Gamma(X)\setminus \Gamma_{u,\rho_\infty})\cup \{\M \rho_\infty^p=\infty\}\cup N'.$$ Choose a point $x_0\notin N$ so that $u_j(x_0)\to u(x_0)$ as $j\to\infty$. By Theorem \ref{liftcond} there are fundamental systems of loops $\F_{x_0}(g_j,\Gamma_j)$ for $u_j$ so that \[ (u_j)_\sharp\F_{x_0}(g_j;\Gamma_j)\le \phi_\sharp\pi(\widehat Y,h_j(x_0)). \]



Set \[\Gamma_0=(\Gamma(X)\setminus\Gamma_{u,\rho_\infty})\cup\Gamma_0'\cup\bigcup_{j\ge 1}\Gamma_j \] and \[ g_0=g_\infty+\sum_{j=1}^\infty 2^{-j}g_j. \] Notice that since $x_0\notin \spt_q\Gamma_0'$ and $x_0\notin \spt_q\Gamma_j$ for all $j\ge 1$, it follows that $x_0\notin \spt_q\Gamma_0$. By Lemma \ref{t0} the path family $\F_{x_0}(g_0; \Gamma_0)$ is a fundamental system of loops for $u$, with basepoint $x_0$.

Moreover $$\F_{x_0}(g_0; \Gamma_0)\subset \F_{x_0}(g_j,\Gamma_j)$$ for all $j$, since $\Gamma_j\subset \Gamma_0$ for all $j$ and $\M g_0^q(x)<\infty$ implies $\M g_j^q(x)<\infty$ for all $j$.

\bigskip\noindent To prove Proposition \ref{stabi} it suffices to prove that
\begin{equation}\label{claim}
u_\sharp\F_{x_0}(g_0;\Gamma_0)\le \phi_\sharp\pi_1(\hat Y,\hat y_0)
\end{equation}
for some $\hat y_0\in \phi^{-1}(u(x_0))$.

We will split the inclusion (\ref{claim}) into two parts.

\subsubsection*{Step 2} A first inclusion for the induced image subgroup of the chosen fundamental system of loops.
\begin{claim} There is a subsequence for which
\begin{equation}\label{step2}
u_\sharp\F_{x_0}(g_0;\Gamma_0)\le \bigcup_{n\ge 0}\bigcap_{j\ge n}\phi_\sharp\pi(\widehat Y,h_j(x_0)).
\end{equation}
\end{claim}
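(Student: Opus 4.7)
The strategy is to reduce to a countable family of test loops and then lift the uniform convergence provided by Lemma \ref{t7'} to a fundamental-group statement, exploiting the local isometry structure of $\phi$.

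Since $\pi(Y)$ is countable by hypothesis, the subgroup $u_\sharp\F_{x_0}(g_0;\Gamma_0)\le\pi(Y,u(x_0))$ is countable, so I select a countable subfamily $\{\gamma^k\}_{k\ge 1}\subset\F_{x_0}(g_0;\Gamma_0)$ whose images $\{[u\circ\gamma^k]\}_k$ generate it. The right-hand side of (\ref{step2}) is a subgroup of $\pi(Y,u(x_0))$ after identifying each $\pi(Y,u_j(x_0))$ with $\pi(Y,u(x_0))$ via a short path $\sigma_j$ lifting through $\phi$ starting at $h_j(x_0)$ (available eventually since $u_j(x_0)\to u(x_0)$ lies in a covering neighbourhood), so it suffices to prove membership for each generator $[u\circ\gamma^k]$.

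Next, I apply Lemma \ref{t7'} to the countable family $\{\gamma^k\}$ with $g_j=g_{u_j}$ and $g_\infty$. Its hypotheses are met: by $\F_{x_0}(g_0;\Gamma_0)\subset\F_{x_0}(g_j;\Gamma_j)$ together with Lemma \ref{t0}, both $g_{u_j}$ and $g_\infty$ are genuine upper gradients along every $\gamma^k$; since $\gamma^k$ avoids $\Gamma_0'$, conditions (2) and (3) of the lemma are satisfied with the convex combinations $\tilde g_j$ fixed in Step~1. Thus, after passing to a further subsequence (relabelled by $j$), $u_j\circ\gamma^k\to u\circ\gamma^k$ uniformly on $[0,1]$ for every $k$.

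Finally, I promote uniform convergence to a statement about fundamental groups. Fix $k$; by Theorem \ref{liftcond} applied to $h_j$, one has $[u_j\circ\gamma^k]\in\phi_\sharp\pi(\widehat Y,h_j(x_0))$ for every $j$. Choose a partition $0=t_0<t_1<\cdots<t_m=1$ such that each compact image $u\circ\gamma^k([t_i,t_{i+1}])$ lies in a covering neighbourhood $B_i\subset Y$. By the uniform convergence and $u_j(x_0)\to u(x_0)$, there exists $N_k$ so that for every $j\ge N_k$ one has $u_j\circ\gamma^k([t_i,t_{i+1}])\subset B_i$ for all $i$, and $u_j(x_0)$ lies in a fixed covering neighbourhood of $u(x_0)$, yielding a short path $\sigma_j$ whose lift from $h_j(x_0)$ terminates at a unique $\hat y_j^*\in\phi^{-1}(u(x_0))$. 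Using simple connectedness of each $B_i$, the pieces $u_j\circ\gamma^k|_{[t_i,t_{i+1}]}$ and $u\circ\gamma^k|_{[t_i,t_{i+1}]}$ are joined by canonical endpoint-varying homotopies; concatenating and bookending with $\sigma_j,\sigma_j^{-1}$ produces a based homotopy in $Y$ between $u_j\circ\gamma^k$ and $\sigma_j\cdot(u\circ\gamma^k)\cdot\sigma_j^{-1}$, both loops at $u_j(x_0)$. Lifting this homotopy through $\phi$ starting at $h_j(x_0)$ and invoking uniqueness of path lifts, the relation $[u_j\circ\gamma^k]\in\phi_\sharp\pi(\widehat Y,h_j(x_0))$ transforms into $[u\circ\gamma^k]\in\phi_\sharp\pi(\widehat Y,\hat y_j^*)$, i.e., via the $\sigma_j$-identification, $[u\circ\gamma^k]\in\phi_\sharp\pi(\widehat Y,h_j(x_0))$ for all $j\ge N_k$. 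Hence $[u\circ\gamma^k]\in\bigcap_{j\ge N_k}\phi_\sharp\pi(\widehat Y,h_j(x_0))$, establishing (\ref{step2}). The main obstacle is this last step: carefully tracking basepoints in $\widehat Y$ and gluing the canonical homotopies inside the $B_i$ so that the resulting homotopy lifts; the uniform (rather than merely $L^1$) convergence from Lemma \ref{t7'} is essential so that one finite partition adapted to $u\circ\gamma^k$ suffices for all sufficiently large $j$.
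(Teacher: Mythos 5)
Your proposal is correct and follows essentially the same route as the paper: reduce to a countable generating set $\{[u\circ\gamma^k]\}$, invoke Lemma \ref{t7'} for uniform convergence $u_j\circ\gamma^k\to u\circ\gamma^k$, and then use a short connecting path from $u_j(x_0)$ to $u(x_0)$ (your $\sigma_j$, the paper's $\alpha_j$) to conjugate $[u_j\circ\gamma^k]\in\phi_\sharp\pi(\widehat Y,h_j(x_0))$ into $[u\circ\gamma^k]\in\phi_\sharp\pi(\widehat Y,\hat q_j)$ for all large $j$; you merely spell out the uniform-closeness-implies-homotopy step that the paper asserts. One small imprecision: the covering neighbourhoods $B_i$ need not be simply connected, only path connected with trivially induced $\pi_1(B_i)\to\pi_1(Y)$, but that weaker property already suffices for your canonical homotopies.
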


\bigskip\noindent Indeed, since $\pi_1(Y,u(x_0))$ is countable, the subgroup $$u_\sharp\F_{x_0}(g_0,\Gamma_0)\le \pi_1(Y,u(x_0))$$ is generated by a countable number of loops $u\circ\gamma^1,u\circ\gamma^2,\ldots$ where $\gamma^1,\gamma^2,\ldots \in \F_{x_0}(g_0,\Gamma_0)$; that is, \[ u_\sharp\F_{x_0}(g_0,\Gamma_0) = \langle u\circ\gamma^k: k\in \N\rangle .  \] By Lemma \ref{t7'} we may pass to a subsequence (still denoted by the same indices) so that $$u_j\circ\gamma^k\to u\circ\gamma^k\textrm{ as }j\to\infty$$ uniformly for all $k$. Let us denote (as in Lemma \ref{t7'}) $f^k_j=u_j\circ\gamma^k$ and $f^k=u\circ\gamma^k$.

\bigskip\noindent For large enough $j$ we have $d(u_{j}(x_0),u(x_0))<r_{u(x_0)}$ so there is a path $\alpha_{j}$ joining $u_{j}(x_0)$ to $u(x_0)$ in $B(u(x_0),r_{u(x_0)})$. Lifting this to a path in $\hat Y$ starting at $h_{j}(x_0)$ we obtain a path $\hat \alpha_{j}$ joining $h_{j}(x_0)$ to a point $\hat \alpha_{j}(1)=:\hat q_{j}\in \phi^{-1}(u(x_0))$ lying in the same component of $p^{-1}B(u(x_0),r_{u(x_0)})$ as $h_j(x_0)$.

Let $k$ be arbitrary but fixed. By  the uniform convergence, for large enough $j$ the distance $$\max_{0\le t\le 1} d(f_j^k(t),f^k(t))$$ is small enough  to $f^k$ so that $f^k\simeq \alpha_{j}^{-1}f^k_{j}\alpha_{j}$, $k=1, 2, \ldots$. Thus, for large enough $j$ we have
\begin{align*}
[u\circ\gamma^k]\in &[\alpha_{j}]^{-1}(u_{j})_\sharp\F_{x_0}(g_j,\Gamma_j)[\alpha_{j}]\le [\phi\circ\widehat\alpha_{j}]^{-1}\phi_\sharp\pi(\widehat{Y},h_j(x_0))[\phi\circ\widehat\alpha_{j}]\\
=&\phi_\sharp\pi(\widehat Y,\hat q_{j}).
\end{align*}
Inclusion (\ref{step2}) follows.

\subsubsection*{Step 3} Using the subconjugacy property.
\begin{claim} The following inclusion holds true for some $\hat y_0\in \phi^{-1}(u(x_0))$.
\begin{equation}\label{step3}
H_\infty:=\bigcup_{n\ge 1}\bigcap_{j\ge n}\phi_\sharp\pi(\hat Y,\hat q_{j} )\le \phi_\sharp\pi(\hat Y,\hat y_0)
\end{equation}
\end{claim}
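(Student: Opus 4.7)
\bigskip\noindent\textbf{Plan for Step 3.} The approach is to translate every subgroup $\phi_\sharp\pi(\hat Y,\hat q_j)$ into a common conjugacy class inside $\pi(Y,u(x_0))$, apply the subconjugacy hypothesis on the resulting sequence of conjugating elements, and then reinterpret the limiting conjugate as a fundamental group based at a suitable point of the fibre.

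First, fix any reference point $\hat y_\ast \in \phi^{-1}(u(x_0))$ and set $H:=\phi_\sharp\pi(\hat Y,\hat y_\ast)\le \pi(Y,u(x_0))$. Since the covering space $\hat Y$ is path connected, for each $j$ I would choose a path $\tilde\beta_j$ in $\hat Y$ from $\hat y_\ast$ to $\hat q_j$. Its projection $\beta_j:=\phi\circ\tilde\beta_j$ is a loop in $Y$ based at $u(x_0)$ and defines an element $g_j:=[\beta_j]\in \pi(Y,u(x_0))$. The standard change-of-basepoint identity for covering spaces then yields
\[ \phi_\sharp\pi(\hat Y,\hat q_j)=g_j^{-1}Hg_j=H^{g_j}, \]
so that
\[ H_\infty=\bigcup_{n\ge 1}\bigcap_{j\ge n}H^{g_j}. \]

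Next, I would invoke the subconjugacy hypothesis on $H\le \pi(Y,u(x_0))$ applied to the sequence $(g_j)$: this provides an element $g\in \pi(Y,u(x_0))$ with $H_\infty\le H^g$. Finally, representing $g$ by a loop $\gamma$ at $u(x_0)$ and lifting $\gamma$ to $\hat Y$ starting at $\hat y_\ast$, the lift ends at a point $\hat y_0:=\tilde\gamma(1)\in\phi^{-1}(u(x_0))$, and the same covering space identity gives $H^g=\phi_\sharp\pi(\hat Y,\hat y_0)$. Combining these inclusions produces the desired
\[ H_\infty\le \phi_\sharp\pi(\hat Y,\hat y_0), \]
for this choice of $\hat y_0$, completing Step~3.

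The only point requiring care is that the subconjugacy condition is stated for the abstract group $\pi(Y)$ while here it must be applied at the concrete basepoint $u(x_0)$; however, since the subconjugacy property is manifestly invariant under conjugation (and different choices of basepoint change $H$ only up to conjugacy), this transition is harmless. There is no serious obstacle in Step~3 itself: the content is essentially a reformulation of the subconjugacy hypothesis in terms of covering space basepoints. The substantive work of the proof of Proposition~\ref{stabi} has already been done in Steps~1 and~2, where an appropriate fundamental system of loops for the limit map was constructed and shown (via the uniform convergence $u_j\circ\gamma^k\to u\circ\gamma^k$ from Lemma~\ref{t7'}) to induce images in the limit inferior of the conjugate subgroups. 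Once Step~3 is in place, one combines \eqref{step2} with \eqref{step3} to conclude that $u_\sharp\F_{x_0}(g_0;\Gamma_0)\le \phi_\sharp\pi(\hat Y,\hat y_0)$, whence Theorem~\ref{liftchar} (applied at exponent $q$, which is legitimate because $u\in \Nem qXY$ and $g_\infty$ is a $q$-weak upper gradient of $u$) furnishes the lift $h\in \Nem qX{\hat Y}$ required by Proposition~\ref{stabi}.
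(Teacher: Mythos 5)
Your argument is correct and is essentially the paper's own proof of Step 3: both identify each $\phi_\sharp\pi(\hat Y,\hat q_j)$ as a conjugate $H^{g_j}$ of a fixed fibre subgroup via the change-of-basepoint identity, apply the subconjugacy hypothesis to the sequence $(g_j)$, and then realize the resulting conjugate $H^g$ as $\phi_\sharp\pi(\hat Y,\hat y_0)$ by lifting a representative loop. You merely make explicit the conjugating elements that the paper introduces without construction.
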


\bigskip\noindent Indeed, each $\phi_\sharp\pi(\hat Y,\hat q_{j})$ is a conjugate of $\phi_\sharp\pi(\hat Y,\hat q_0)$ for some (arbitrary) fixed $\hat q_0\in \phi^{-1}(u(x_0))$. Denote by $a_j\in \pi(Y,u(x_0))$ the conjugating elements for $\phi_\sharp\pi(\hat Y,\hat q_{j})$. Since we assume that $\phi_\sharp\pi(\hat Y,\hat q_0)$ has the subconjugacy property it follows that there is some $a\in \pi(Y,u(x_0)))$ so that $$ H_\infty \le \phi_\sharp\pi(\hat Y,\hat q_0)^a.$$ Now $\phi_\sharp\pi(\hat Y,\hat q_0)^a=\phi_\sharp\pi(\hat Y,\hat y_0)$ for some $\hat y_0\in \phi^{-1}(u(x_0))$.

\bigskip\noindent We have proven (\ref{claim}), whereby Theorem \ref{liftcond} implies that $u$ admits a lift $h\in \Nem qX{\hat Y}$. This concludes the proof of Proposition \ref{stabi}.
\end{proof}

\noindent In general the subconjugacy property is not valid for arbitrary subgroups of a finitely generated group.
To rid ourselves of the assumption of subconjugacy it would be sufficient to prove that the subgroup $u_\sharp\F_{x_0}(g_0,\Gamma_0)$ is finitely generated (this follows essentially from the proof of \ref{noeth}). This sort of result requires some topological properties on the domain space $(X,d,\mu)$, such as having a finitely generated fundamental group. Up to now we have not imposed any such requirements. For instance assuming that $X$ admits a universal covering would be a natural requirement in this setting.

The question whether $u_\sharp\F_{x_0}(g_0,\Gamma_0)$ is finitely generated is related to the question of whether a map $u\in \Nem pXY$ induces a homomorphism $$u_*:\pi(X)\to \pi(Y)$$ since of course such a homomorphism would transfer any finite generating set for $\pi(X)$ (assuming it has one) to a finite generating set for $u_*(\pi(X))$.

\begin{corollary} Assume $X$ has finitely generated fundamental group, $u\in \Nem pXY$ has $p$-weak upper gradient $g\in L^p(\mu)$ and $x_0\in X$ satisfies $\M g^p(x_0)<\infty$. Suppose there is a homomorphism $$u_*:\pi(X,x_0)\to \pi(Y,u(x_0))$$ and a fundamental system of loops for $u$, $\F_{x_0}(g_0;\Gamma_0)$, so that 
\begin{itemize}
\item[(1)] for all $[\gamma]\in \pi(X,x_0)$ there exists $\gamma'\in \F_{x_0}(g_0;\Gamma_0)$ satisfying \[ u_*([\gamma])=[u\circ\gamma'], \] and
\item[(2)] \[ u_*([\gamma])=[u\circ\gamma] \] for all $\gamma\in \F_{x_0}(g_0;\Gamma_0)$.
\end{itemize}
Then $$u_\sharp\F_{x_0}(g_0,\Gamma_0)$$ is finitely generated.
\end{corollary}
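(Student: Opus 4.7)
The plan is to show that a finite generating set for $\pi(X,x_0)$ transfers, via the homomorphism $u_*$ together with hypotheses (1) and (2), to a finite generating set for $u_\sharp\F_{x_0}(g_0;\Gamma_0)$. Since $\pi(X,x_0)$ is finitely generated, fix once and for all a finite generating set $\{[\gamma_1],\ldots,[\gamma_n]\}\subset \pi(X,x_0)$. Applying (1) to each $[\gamma_i]$ produces a loop $\gamma_i'\in \F_{x_0}(g_0;\Gamma_0)$ with $u_*([\gamma_i])=[u\circ\gamma_i']$. Set \[ S:=\{[u\circ\gamma_1'],\ldots,[u\circ\gamma_n']\}\subset \pi(Y,u(x_0)). \] I claim that $u_\sharp\F_{x_0}(g_0;\Gamma_0)=\langle S\rangle$, from which finite generation is immediate.

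The inclusion $\langle S\rangle\le u_\sharp\F_{x_0}(g_0;\Gamma_0)$ is immediate from the definition of $u_\sharp\F_{x_0}(g_0;\Gamma_0)$ (Definition \ref{imgp}), since each $\gamma_i'$ already belongs to $\F_{x_0}(g_0;\Gamma_0)$. For the reverse inclusion, let $\gamma\in \F_{x_0}(g_0;\Gamma_0)$ be arbitrary. By Definition \ref{fsl} the path $\gamma$ is a loop based at $x_0$, so it represents an element $[\gamma]\in \pi(X,x_0)$, which expands as a word $[\gamma]=[\gamma_{i_1}]^{\varepsilon_1}\cdots [\gamma_{i_k}]^{\varepsilon_k}$ in the generators, with $\varepsilon_j\in \{\pm 1\}$. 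Applying the homomorphism $u_*$ and then hypothesis (2) to the left-hand side gives
\[ [u\circ\gamma]=u_*([\gamma])=u_*([\gamma_{i_1}])^{\varepsilon_1}\cdots u_*([\gamma_{i_k}])^{\varepsilon_k}=[u\circ\gamma'_{i_1}]^{\varepsilon_1}\cdots [u\circ\gamma'_{i_k}]^{\varepsilon_k}\in \langle S\rangle. \]
Since the generators of $u_\sharp\F_{x_0}(g_0;\Gamma_0)$ are precisely the classes $[u\circ\gamma]$ with $\gamma\in \F_{x_0}(g_0;\Gamma_0)$, this gives $u_\sharp\F_{x_0}(g_0;\Gamma_0)\le \langle S\rangle$, and the claim follows.

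There is no real obstacle here beyond unwinding the definitions: the substantive content is supplied entirely by hypotheses (1) and (2), which together say that the homomorphism $u_*$ both realizes every class as $[u\circ\gamma']$ for some $\gamma'$ in the fundamental system of loops, and agrees with the path-level map $\gamma\mapsto[u\circ\gamma]$ on the fundamental system of loops. One minor point to verify carefully is that the elements of $\F_{x_0}(g_0;\Gamma_0)$ really do represent classes in $\pi(X,x_0)$, but this is built into Definition \ref{fsl}, where such paths are concatenations $\alpha\beta^{-1}$ of curves from $x_0$ to a common endpoint, hence loops at $x_0$.
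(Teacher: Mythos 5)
Your proof is correct and is essentially the paper's argument: hypothesis (1) gives $u_*\pi(X,x_0)\le u_\sharp\F_{x_0}(g_0;\Gamma_0)$ and hypothesis (2) gives the reverse inclusion, so the group equals the homomorphic image of the finitely generated group $\pi(X,x_0)$. You merely unwind this by exhibiting the explicit generating set $S=\{u_*([\gamma_i])\}$, which is the same content in slightly more detail.
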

\begin{proof}
By (1) we have $$u_*\pi(X,x_0)\le u_\sharp\F_{x_0}(g_0;\Gamma_0)$$ and (2) implies $$ u_\sharp\F_{x_0}(g_0;\Gamma_0)\le u_*\pi(X,x_0).$$ The group $\pi(X,x_0)$ is finitely generated. Thus $u_\sharp\F_{x_0}(g_0;\Gamma_0)$ is finitely generated.
\end{proof}

\subsection{Existence of minimizers}

\begin{proposition}\label{comp}
Let $X$ be a compact doubling $p$-Poincar\'e space and $Y$ a compact geodesic path representable space whose fundamental group is Noetherian or torsion free hyperbolic. Then any $p$-quasihomotopy class $[v]\subset \Nem pXY$ is closed under bounded convergence, i.e. if $u_j\in [v]$ is a sequence with $$\sup_j\int_Xg_{u_j}^p\ud\mu<\infty$$ and $u_j\to u\in \Nem pXY$ in $L^p(\mu)$, then $u\in [v]$.
\end{proposition}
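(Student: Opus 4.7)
The plan is to translate the statement into a lift-existence problem via Theorem~\ref{homchar}, apply the stability result Theorem~\ref{stab} to the diagonal covering $p:\diaco Y\to Y\times Y$, and then translate back. Since $u_j\in[v]$, Theorem~\ref{homchar} supplies for each $j$ a lift $h_j\in\Nem pX{\diaco Y}$ of $(u_j,v)$. The target $Y\times Y$ is a compact geodesic (hence complete separable length) space, and the length metric on $\diaco Y$ makes $p$ a local isometry. Furthermore $(u_j,v)\to(u,v)$ in $L^p(X;Y\times Y)$, and $g_{(u_j,v)}\le(g_{u_j}^2+g_v^2)^{1/2}$ gives a uniform $L^p$-bound on upper gradients. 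Thus the sequence $(u_j,v)$ satisfies the hypotheses of Theorem~\ref{stab}, provided (i)~$\pi(Y\times Y)$ is countable and (ii)~$p_\sharp\pi(\diaco Y)\le\pi(Y\times Y)$ satisfies the subconjugacy condition.

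For (i): $Y$ is compact geodesic and admits a universal cover, so $\pi(Y)$ acts properly and cocompactly by deck transformations on $\widetilde Y$; Proposition~\ref{svarc} then gives that $\pi(Y)$ is finitely generated, so $\pi(Y\times Y)=\pi(Y)\times\pi(Y)$ is countable. For (ii): by Lemma~\ref{diagonal}, $p_\sharp\pi(\diaco Y)$ is a conjugate of $\diag(\pi(Y))$, and since subconjugacy is invariant under conjugation it suffices to verify it for $\diag(\pi(Y))\le\pi(Y)\times\pi(Y)$. In the torsion-free hyperbolic case this is precisely the content of Lemma~\ref{hyptor}. In the Noetherian case, fix an arbitrary sequence $\{(g_n,h_n)\}\subset\pi(Y)\times\pi(Y)$ and consider
\[
H_\infty:=\bigcup_{k\ge 1}\bigcap_{n\ge k}\diag(\pi(Y))^{(g_n,h_n)}.
\]
If $(a,b)\in\bigcap_{n\ge k}\diag(\pi(Y))^{(g_n,h_n)}$, then for each $n\ge k$ there is some $\lambda_n\in\pi(Y)$ with $a=g_n^{-1}\lambda_n g_n$ and $b=h_n^{-1}\lambda_n h_n$; in particular $a=e$ forces $\lambda_n=e$ and hence $b=e$. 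Thus the first-coordinate projection $\pi(Y)\times\pi(Y)\to\pi(Y)$ is injective on $H_\infty$, so $H_\infty$ embeds into the Noetherian group $\pi(Y)$ and is therefore finitely generated. The second part of Lemma~\ref{noeth} then supplies the required $g\in\pi(Y)\times\pi(Y)$ with $H_\infty\le\diag(\pi(Y))^g$.

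With (i) and (ii) in hand, Theorem~\ref{stab} produces a lift $h\in\Nem pX{\diaco Y}$ of $(u,v)$. A final application of Theorem~\ref{homchar}, in the reverse direction, shows that $u$ is $p$-quasihomotopic to $v$, i.e. $u\in[v]$, which is the conclusion.

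The only nonformal step is the verification of subconjugacy in the Noetherian case; the hyperbolic case is handed to us by Lemma~\ref{hyptor}, and everything else is bookkeeping for the machinery assembled in Sections~3--5. The key observation is the rigidity of diagonal conjugates: specifying the first coordinate of an element of $\diag(\pi(Y))^{(g_n,h_n)}$ determines the second, so intersections behave as subgroups of $\pi(Y)$ itself, reducing the Noetherian subconjugacy condition in $\pi(Y)\times\pi(Y)$ to the Noetherian hypothesis on $\pi(Y)$ alone.
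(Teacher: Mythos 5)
Your proof is correct and follows essentially the same route as the paper: Theorem \ref{homchar} to convert quasihomotopies into lifts to $\diaco Y$, Theorem \ref{stab} applied to the covering $p:\diaco Y\to Y\times Y$, and Theorem \ref{homchar} again to conclude. Your explicit verification of subconjugacy of $\diag(\pi(Y))$ in the Noetherian case (via injectivity of the first-coordinate projection on $H_\infty$, feeding into the second part of Lemma \ref{noeth}) is a correct filling-in of a step the paper simply delegates to Lemma \ref{noeth}.
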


\begin{proof}[Proof of Proposition \ref{comp}]
Recall the covering $p: \diaco Y\to Y\times Y$ from Subsection 2.4. Note that if $G=\pi(Y)$ satisfies any of the conditions in the claim of Theorem \ref{minex} then $diag(G)=p_\sharp\pi(\diaco Y)$ has the subconjugacy property with respect to $\pi(Y\times Y)=G\times G$ (Lemmas \ref{noeth} and \ref{hyptor}).

Suppose the sequence $u_j$ is as in the claim and let $h_j\in \Nem pX{\diaco Y}$ be the lift of $(v,u_j)\in \Nem pX{Y\times Y}$ the existence of which is guaranteed by Theorem \ref{homchar}. The norms of the gradients \[ \int_Xg_{(v,u_j)}^p\ud\mu\le C \int_Xg_u^p\ud\mu+C\int_Xg_{v_j}^p\ud\mu \] are bounded uniformly in $j$ and the sequence $(v,u_j)$ converges to the map $(u,v)\in \Nem pX{Y\times Y}$ in $L^p(X;Y\times Y)$. By Theorem \ref{stab} $(u,v)$ has a lift $h\in \Nem pX{\diaco Y}$ and thus (again by Theorem \ref{homchar}) $v\in [u]$.
\end{proof}

\noindent Theorem \ref{minex} readily follows from Proposition \ref{comp} and the lower semicontinuity of $e_p$ under $L^p$-convergence (\cite[Theorem 7.3.9]{HKST07}).
\begin{proof}[Proof of Theorem \ref{minex}]
Given a $p$-quasihomotopy class $[v]\subset \Nem pXY$ take a minimizing sequence $u_j\in [v]$, $$ e_p(u_j)\to \inf_{u\in [v]}e_p(u)=:I.$$ Since $u_j$ is a bounded sequence in $\Nem pXY$ we may, by Rellich's compactness theorem \ref{rellich}, pass to a subsequence converging in $L^p(X;Y)$ to a map $u\in \Nem pXY$ so that $$e_p(u)=\int_X g_u^p\ud \mu\le \liminf_{j\to\infty}\int_Xg_{u_j}^p\ud\mu=I.$$ By Proposition \ref{comp} we have $u\in [v]$ and therefore $I\le e_p(u)\le I$. This completes the proof.
\end{proof}

\bigskip\subsubsection*{Acknowledgements} I would like to thank Pekka Pankka for a careful reading of the manuscript and many helpful suggestions. The manuscript was finished during the research term on analysis and geometry in metric spaces in ICMAT, Madrid; many thanks go to the organizers of this event for a hospitable and inspiring stay. I am also indebted to my advisor Ilkka Holopainen and the V\"ais\"al\"a foundation for financial support.

\bibliographystyle{plain}
\bibliography{abib}
\end{document}